\newtheorem*{theo}{Theorem}
\newtheorem{thm}{Theorem}
\newtheorem*{propo}{Proposition}
\newtheorem{ques}[thm]{Question}
\def\demo{\medskip\goodbreak\noindent
     \hbox{\sc Proof \kern .3em}\ignorespaces}%
  \def \qedbox{$\square$}%
  \def \qed{\hglue1mm\hfill{\ifmmode\qedbox
     \else\unskip\ \hglue0mm\hfill\qedbox\medskip
      \goodbreak\fi}}%
\def\qed{\hglue1mm\hfill\raise -2pt\hbox{\vrule\vbox to 10pt{\hrule width
4pt
                  \vfill\hrule}\vrule}}
\newcommand{\T}{\mathbb {T}}
\newcommand{\R}{\mathbb {R}}
\newcommand{\Z}{\mathbb {Z}}
\newcommand{\N}{\mathbb {N}}
\newcommand{\Nc}{\mathcal {N}}
\newcommand{\Hc}{\mathcal {H}}
\newcommand{\Mc}{\mathcal {M}}
\newcommand{\Gc}{\mathcal {G}}
\newcommand{\Lc}{\mathcal {L}}
\newcommand{\LLL}{\widehat L}
\newcommand{\LL}{\widetilde L}
\newcommand{\fit}{\wtilde \varphi}
\newcommand{\Fc}{\mathcal {F}}
\newcommand{\Ac}{\mathcal {A}}
\newcommand{\Tc}{\mathcal {T}}
\newcommand{\cku}{{C^k (\wtilde{U})}}
\newcommand{\ckpu}{{C^{k+1} (\wtilde{U})}}
\newcommand{\ep}{\epsilon}
\newcommand{\dps}{\displaystyle}
\newtheorem{theorem}{Theorem}
\newtheorem{lemma}{Lemma}[theorem]
\newtheorem{proposition}{Proposition}
\newtheorem{corollary}{Corollary}[theorem]
\newtheorem{remark}{Remark} 
\newtheorem{definition}{Definition}
\newcommand{\be}{\begin{equation}}
\newcommand{\ee}{\end{equation}}
\newcommand{\om}{\omega}
\newcommand{\ov}{\overline}
\newcommand{\wtilde}{\widetilde}
\newcommand{\G }{\Gamma }
\newcommand{\la}{\langle}
\newcommand{\ra}{\rangle}
\newcommand{\TT}{T^*\T^n}
\def \a{\alpha}
\newcommand{\e}{\varepsilon}
\def \g{\gamma}
\def \G{\Gamma}
\def \l{\lambda}
\def \o{\omega}
\def \pr{{\rm pr}}
\def \vv{\vert \vert}
\def \ec{extremal curve }
\def \ecs{extremal curves }
\def \cp{conjugate points }
\begin{document}

\title{ Tonelli Hamiltonians without conjugate points and $C^0$ integrability}
\author{M.~Arcostanzo\thanks{Avignon University, LMA EA 2151,  F-84000, Avignon, France}, M.-C.~Arnaud\footnotemark[1] \thanks{ANR-12-BLAN-WKBHJ} \thanks{membre de l'Institut Universitaire de France} , P.~Bolle\footnotemark[1], M.~Zavidovique\footnotemark[2] \thanks{IMJ, Universit\' e Pierre et Marie Curie, Case 247,
4 place Jussieu, 
F-75252 Paris cedex 05 } }
\maketitle
\abstract{We prove that all the Tonelli Hamiltonians defined on the cotangent bundle $T^*\T^n$ of the $n$-dimensional torus that have no conjugate points are $C^0$ integrable, i.e. $T^*\T^n$ is $C^0$ foliated by a family $\Fc$ of invariant $C^0$ Lagrangian graphs.\\
Assuming that the Hamiltonian is $C^\infty$,  we prove that there exists a $G_\delta$ subset  $\Gc$ of $\Fc$ such that the dynamics restricted to every element of $\Gc$ is strictly ergodic.\\
Moreover, we prove that the Lyapunov exponents of  every $C^0$ integrable Tonelli Hamiltonian are zero and deduce that the metric and topological entropies vanish. }
\medskip

 \noindent{\em Key words: } Hamiltonian systems. Complete integrability. KAM theorems.  Entropy. Weak KAM theory.\\
  
 \newpage

\tableofcontents

\section*{Introduction}
This article deals with $C^0$ integrable Tonelli Hamiltonians\footnote{All the notions will be defined at the end of this introduction} and Tonelli Hamiltonians without conjugate points of the cotangent bundle $T^*\T^n$ of the $n$-dimensional torus. 

If the Tonelli Hamiltonian  is a Riemannian metric, these properties coincide and  have strong implications. 
Indeed, Heber showed (see \cite {Heber}) in 1994 that for every Riemannian 
metric without conjugate points on the torus $\T^n$, there is a 
continuous foliation of the unit tangent bundle by tori which are Lipschitz, 
Lagrangian and invariant by the geodesic flow. The same year, this was 
improved by Burago and Ivanov who proved (see \cite {BI}) that such a metric 
has to be flat; as an immediate consequence, the continuous foliation in 
Heber's result is in fact smooth.

The notion of Tonelli Lagrangian is a vast extension of the concept of 
Riemannian metric, but we shall prove in the first  section that Heber's result 
still holds:


\begin{theorem}\label{th1}
Let $H$ be a Tonelli Hamiltonian on $T^*\T^n$. Then $H$ 
has no conjugate points if and only if there is a continuous foliation 
of $T^*\T^n$ by Lipschitz, Lagrangian, flow-invariant graphs.
\end{theorem}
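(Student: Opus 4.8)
I would prove the two implications separately, treating first the easy one, from the foliation to the absence of conjugate points. Fix a leaf $\mathcal L$ of the given foliation. Being a Lipschitz Lagrangian graph, $\mathcal L$ is the graph of a Lipschitz closed $1$-form, hence of $c+du$ for some cohomology class $c\in H^1(\T^n;\R)$ and some $u\in C^{1,1}(\T^n)$; invariance of $\mathcal L$ under the Hamiltonian flow $(\phi^t)$ then forces $x\mapsto H(x,c+du(x))$ to be almost everywhere, hence everywhere, equal to a constant $e$, so that $u$ is a $C^{1,1}$ weak KAM solution at the level $e=\alpha(c)$. Fenchel's inequality, applied with the exact $1$-form $c+du$, shows that every orbit carried by $\mathcal L$ is calibrated by $u$ on all of $\R$; lifting to the universal cover, its projection to $\T^n$ therefore minimises the Lagrangian action between any two of its points over \emph{every} compact time interval. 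An extremal with this property cannot contain a conjugate pair $\gamma(t_1)\sim\gamma(t_2)$: choosing an interval with left endpoint $t_1$ and with $t_2$ in its interior, on which $\gamma$ is minimising, one contradicts the classical fact that a minimising extremal has no conjugate point to its left endpoint in the interior of its interval. Since every point of $\TT$ lies on a leaf, $H$ has no conjugate points.

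For the converse I would follow Heber's scheme. The first step is to attach to each class $c\in H^1(\T^n;\R)\cong\R^n$ a Lipschitz Lagrangian $\phi^t$-invariant graph $\mathcal G_c\subset\{H=\alpha(c)\}$, and this is where the hypothesis is really used: the absence of conjugate points rules out the crossings of calibrated curves that create singularities of the weak KAM solutions, so that for each $c$ the weak KAM solution $u_c$ is unique up to an additive constant and of class $C^{1,1}$; one sets $\mathcal G_c:=\{(x,c+du_c(x)):x\in\T^n\}$. (Equivalently, the absence of conjugate points makes the positive and negative Green bundles everywhere defined, continuous, Lagrangian, and transverse to the vertical subbundle, and $\mathcal G_c$ is everywhere tangent to them.)

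The second step is to show that $c\mapsto\mathcal G_c$ is a continuous foliation of $\TT$, and I would split it into three points. \emph{Continuity}: $c\mapsto u_c$ is continuous in the uniform norm, by stability of the Lax--Oleinik semigroups under perturbation of $c$ together with the uniqueness above; since moreover the $u_c$ are uniformly $C^{1,1}$ for $c$ in a compact set, $(c,x)\mapsto(x,c+du_c(x))$ is continuous. \emph{Disjointness}: if $\mathcal G_c\cap\mathcal G_{c'}\ne\emptyset$ then, the intersection being $\phi^t$-invariant, the two graphs share a whole orbit, along which strict convexity of $H$ in $p$ forces $c+du_c=c'+du_{c'}$; one propagates this identity from that orbit to enough of $\T^n$ — using that $\mathcal G_c$ is its own Aubry set and the coincidence of the Green bundles on the supports of minimising measures — to conclude, by integration over loops, that $c=c'$. \emph{Covering}: for fixed $x_0\in\T^n$, the map $c\mapsto c+du_c(x_0)$ is continuous, injective by the previous point, and proper — since $\alpha(c)\to+\infty$ and $H$ is bounded on bounded sets, $\|c+du_c(x_0)\|\to\infty$ as $\|c\|\to\infty$ — hence a homeomorphism $\R^n\to T^*_{x_0}\T^n$ by invariance of domain. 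Consequently $(c,x)\mapsto(x,c+du_c(x))$ is a homeomorphism from $\R^n\times\T^n$ onto $\TT$ carrying the slices $\{c\}\times\T^n$ to the graphs $\mathcal G_c$, which are Lipschitz, Lagrangian, and $\phi^t$-invariant by construction.

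The main difficulties are both in the converse direction. The first, logically upstream, is extracting from the absence of conjugate points the $C^{1,1}$ regularity and the uniqueness up to constants of the weak KAM solution of each class — this is the load-bearing use of the hypothesis. The second, which I expect to be the genuine obstacle, is the disjointness in the second step: a priori invariant Lagrangian graphs of distinct classes may touch, and ruling this out cannot be done by soft arguments; it requires the finer dynamical input (recurrence inside a graph, the structure of the Aubry set, and the behaviour of the Green bundles along minimising measures). Once these two inputs are available, the remaining assembling is routine (a compactness argument, invariance of domain, superlinearity of $\alpha$).
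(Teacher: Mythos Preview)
Your treatment of the easy direction is fine and essentially what the paper cites from \cite{arnaud4}.

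The converse, however, has a genuine gap at exactly the point you identify as load-bearing. You assert that ``the absence of conjugate points rules out the crossings of calibrated curves that create singularities of the weak KAM solutions,'' hence each $u_c$ is $C^{1,1}$ and unique. This implication is not valid as stated. A singularity of a weak KAM solution at $x$ corresponds to two backward calibrated curves leaving $x$ with distinct velocities; these curves go to \emph{different} points as $t\to -\infty$, so no contradiction with the hypothesis arises. ``No conjugate points'' is a statement about the linearised flow (local injectivity of the exponential-type map $v\mapsto \pi\circ\wtilde\varphi_t(x,v)$), not about global uniqueness of minimisers, and even global injectivity of that map --- which the paper does prove, via a nontrivial mountain-pass argument --- does not by itself force $\pi(\Ac^*_c)=\T^n$. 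Your parenthetical alternative is also off: the Green bundles are defined and Lagrangian under the hypothesis, but only semi-continuous, and in any case their mere existence does not produce an invariant graph for each cohomology class.

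The paper's route to this step is quite different and worth knowing. It first proves (by mountain pass) that $v\mapsto \pi\circ\wtilde\varphi_t(x,v)$ is a diffeomorphism of $\R^n$, so that on the universal cover every extremal minimises between any two of its points and is determined by its endpoints and time. From this it builds, for each $T>0$ and $r\in\Z^n$, a \emph{totally periodic} $C^{k-1}$ Lagrangian graph $\Gc^*_{T,r}$: by a Busemann-type argument the function $x\mapsto \Ac_T(x,x+r)$ is shown to be constant, which forces a $T$-periodic extremal of homotopy class $r$ through every point. These periodic tori are identified with Aubry sets, and then used to show that \emph{every} $\Ac^*_c$ projects onto $\T^n$: any point of $\Ac_c$ is approached by closed extremals, which (by the uniqueness of extremals) must be periodic and hence lie in some $\Gc_{T_m,r_m}$; one then transports the same period/homotopy data to an arbitrary basepoint $y$ and gets $y\in\pi(\Ac_c)$. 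Only after this does the paper run the continuity/disjointness/covering argument you sketch in your second step, with invariance of domain as you propose.

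Your disjointness argument is also incomplete: sharing a single orbit gives $c+du_c=c'+du_{c'}$ only along that orbit, and ``propagating this identity to enough of $\T^n$'' is precisely the hard part. The paper instead exploits convexity of $\alpha$: an intersection forces $\alpha(\frac{c+c'}{2})=\frac12(\alpha(c)+\alpha(c'))$, and then shows $\Ac^*_{(c+c')/2}\subset \Ac^*_c\cap\Ac^*_{c'}$, whence the three graphs coincide and $c=c'$. This is a cleaner and complete argument you could adopt.
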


The proof uses ideas coming from weak KAM and Aubry-Mather theory. In fact, we establish that each leaf of the above foliation is the dual Aubry set corresponding to some cohomology class. More precisely, the first step of the proof is to see that some of those Aubry sets (later denoted by $\Gc^*_{T,r}$, with $T>0$ and $r\in \mathbb Z^n$) are covered by periodic orbits of the Hamiltonian flow $(\phi_t^H)_{t\in \R}$, of a given period $T$ (and a given homology class $r$). In particular, the dynamics on the corresponding leaves is periodic.

The existence of those particular leaves is used again in the second  section. Using a KAM theorem, we prove that such sets $\Gc^*_{T,r}$ are accumulated by KAM tori on which the dynamics is conjugated to an irrational rotation:

\begin{theorem}
 \label{quasi}
Assume that $H$ is $C^\infty$ and that $\ov{\om}$ is  strongly Diophantine\footnote{This notion is defined in  subsection \ref{ss20}}. There is $m_0 \in \N\backslash \{0 \}$ (depending on $\ov{\om}$) such that, for all
$m \geqslant m_0$, there is a $C^\infty$ Lagrangian embedding $i_m : \T^n \to \TT $ such that
\begin{itemize} 
\item[i)]
$ \  \forall \eta \in \T^n \, , \quad
\phi^H_{mT} \big(i_m(\eta)\big)=i_m(\eta + \ov{\om}) \, $. 
\item[ii)]
Writing $i_m(\eta)=\big(\psi_m (\eta), f_m (\eta)\big)$, $\psi_m$ is a $C^\infty$ diffeomorphism of $\T^n$, isotopic to $id_{\T^n}$
 and
$$
{\cal T}_m= i_m (\T^n)= \big\{ \big(\theta \,  ,\,  (f_m \circ \psi_m^{-1})(\theta)\big) \, ; \ \theta \in \T^n \  \big\} \, .
$$
is a Lagrangian graph; the sequence $({\cal T}_m)$ converges to ${\cal T}_\infty:= \Gc_{T,r}$ in $C^\infty$ topology. 
\item[iii)]
The sequence $(\psi_m)$ 
converges in $C^\infty$ topology to a diffeomorphism $\psi_\infty$ of $\T^n$   (independent of $\ov{\om}$),
isotopic to $id_{\T^n}$.
\item[iv)] The tori ${\cal T}_m$ are  flow-invariant. More precisely,
$ \ i_m^* (X_H)=\dps \frac{r}{T} + \frac{\ov{\om}}{mT} \ $, so that
$$
\forall m \geqslant m_0 \, , \ \forall t\in \R \, , \ 
\forall \eta \in \T^n \, , \quad  {\phi}^H_t \big({i}_m (\eta)\big)
={i}_m \Big(\eta + \frac{t}{T} r + \frac{t}{mT}\ov{\om}\Big).
$$
\end{itemize}
\end{theorem}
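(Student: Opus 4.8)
The plan is to obtain Theorem~\ref{quasi} as an application of a quantitative KAM theorem in a shrinking neighbourhood of the periodic leaf $\Gc_{T,r}$, the integer $m$ entering through the size of the perturbation and the arithmetic thresholds. First I would set up good coordinates. By Theorem~\ref{th1} and the analysis of Section~1, $\Gc_{T,r}$ is a flow-invariant Lagrangian graph contained in the energy level $\{H=e\}$, $e=\alpha(c_{T,r})$, covered by periodic orbits of period $T$ and homology $r$, so that $\phi^H_T$ is the identity on it and the flow on it is (smoothly conjugate to) the linear flow $\eta\mapsto\eta+\tfrac tT r$. Fix a smooth symplectic diffeomorphism of a neighbourhood of $\Gc_{T,r}$ onto $\T^n\times B$ (with $B$ a small ball around $0$ in $\R^n$) taking $\Gc_{T,r}$ to the zero section and the flow on it to that linear flow. (That $\Gc_{T,r}$ is in fact $C^\infty$, not merely Lipschitz, can be read off from the Green bundles, which along a periodic orbit are as regular as the flow and coincide there with the tangent spaces of the leaf; alternatively it is recovered once ii) is proved.) In these coordinates $H(\theta,0)\equiv e$ and $\nabla_I H(\theta,0)\equiv\tfrac rT$, hence $H(\theta,I)=e+\la\tfrac rT,I\ra+\tfrac12\la A(\theta)I,I\ra+O(|I|^3)$.

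Next I would run a finite-order Birkhoff-type normal form, killing the angle dependence up to high order in $I$: after a further symplectic change $O(|I|^2)$-close to the identity, $H=e+\la\tfrac rT,I\ra+h_2(I)+\dots+h_N(I)+R_N(\theta,I)$, with $h_j$ homogeneous of degree $j$, $R_N=O(|I|^{N+1})$, and $h_2(I)=\tfrac12\la\bar A\,I,I\ra$ for $\bar A$ the average of $A$ along the linear flow. Setting $h(I)=\la\tfrac rT,I\ra+h_2(I)+\dots+h_N(I)$, the map $\phi^H_{mT}$ then differs from the integrable twist $(\theta,I)\mapsto(\theta+mT\nabla h(I),I)$ by a correction governed by $R_N$; on the shell $|I|\sim 1/m$ that I will use, this correction has size $O(m^{-(N+1)})$.

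The non-degeneracy required by KAM comes from the Tonelli hypothesis: strict fibre-convexity makes the flow a positive-twist flow transverse to Lagrangian graphs --- equivalently, the rotation vectors of the leaves near $\Gc_{T,r}$ fill a neighbourhood of $\tfrac rT$ --- and this forces $\bar A$ to be invertible (Kolmogorov non-degeneracy; a R\"ussmann-type condition would also suffice). Hence for every large $m$ there is a unique $I_m=\tfrac1{mT}\bar A^{-1}\ov\om+o(1/m)\to 0$ with $mT\,\nabla h(I_m)\equiv\ov\om\pmod{\Z^n}$, using $mr\in\Z^n$. Rescaling $I=I_m+J$, the map $\phi^H_{mT}$ becomes, after normalization, a perturbation of size $O(m^{-(N+1)})$ of the linear twist $J\mapsto(\theta+\ov\om+mT\bar A\,J,\,J)$; the rotation vector $\ov\om$ is Diophantine, and the point of the \emph{strongly} Diophantine hypothesis is precisely that its arithmetic margin dominates the $m$-dependent loss incurred by this rescaling, uniformly for $m\geqslant m_0$. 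A standard KAM theorem (analytic or $C^\infty$ category, e.g.\ in the form due to P\"oschel or Herman, with the usual control on the size of the conjugacy) then yields, for all $m\geqslant m_0$, a $C^\infty$ exact Lagrangian graph $\Tc_m$ close to $\{I=I_m\}$, invariant under $\phi^H_{mT}$, on which $\phi^H_{mT}$ is smoothly conjugate to $\eta\mapsto\eta+\ov\om$; this provides $i_m$, proves i), the graph property in ii), and --- since both the perturbation size and $\mathrm{dist}(\Tc_m,\Gc_{T,r})$ tend to $0$ with uniformly controlled KAM estimates --- the $C^\infty$ convergence $\Tc_m\to\Gc_{T,r}$.

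Finally, $\Tc_m$ being $\phi^H_{mT}$-invariant and $(\phi^H_t)$ commuting with $\phi^H_{mT}$, every $\phi^H_t(\Tc_m)$ is again a $\phi^H_{mT}$-invariant graph near $\Gc_{T,r}$ carrying the rotation $\ov\om$; uniqueness of the Diophantine KAM torus in a small neighbourhood forces $\phi^H_t(\Tc_m)=\Tc_m$, so $\Tc_m$ is flow-invariant, and the induced flow on it is linear, close to $\eta\mapsto\eta+\tfrac tT r$ and compatible with the fact that $\phi^H_{mT}$ restricts there to $\eta\mapsto\eta+\ov\om$, hence of frequency $\tfrac rT+\tfrac{\ov\om}{mT}$ --- which is iv). Letting $m\to\infty$ in the conjugacies produces a parametrization $i_\infty$ of $\Gc_{T,r}$ linearizing the flow; since $\tfrac{\ov\om}{mT}\to0$ regardless of $\ov\om$, the limit $\psi_\infty$ of $(\psi_m)$ is independent of $\ov\om$, giving iii). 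The main obstacle is the uniformity in $m$: one must carry out the KAM scheme at the scale $|I|\sim1/m$ and check that the strongly Diophantine property absorbs the loss of arithmetic constant under rescaling, while keeping all estimates on $\Tc_m$, $i_m$, $\psi_m$ uniform enough to get $C^\infty$ convergence --- this, together with the smoothness of $\Gc_{T,r}$ and the non-degeneracy of $\bar A$ (both ultimately consequences of the absence of conjugate points), is where the real work lies.
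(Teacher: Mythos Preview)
Your overall architecture (normal form near the periodic leaf, then KAM for $\phi^H_{mT}$, then flow-invariance and convergence) is the same as the paper's. But the central step --- the normal form --- is where your argument breaks.

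You propose a Birkhoff-type normal form to ``kill the angle dependence'' of the quadratic part and obtain $h_2(I)=\tfrac12\langle \bar A I,I\rangle$ with $\bar A$ constant. This is not possible here by averaging. On the periodic leaf the frequency is $r/T\in\Q^n$; equivalently, for the map $\phi^H_T$ the zero section is a torus of \emph{fixed points}, so every Fourier mode $k\in\Z^n$ is resonant. The homological equation $\langle r/T,\partial_\theta\chi\rangle=A(\theta)-\bar A$ has an $(n{-}1)$-dimensional resonance lattice $\{k:k\cdot r=0\}$, and the corresponding modes of $A(\theta)$ cannot be removed. Your ``average of $A$ along the linear flow'' is then a function on the orbit quotient $\T^{n-1}$, not a constant matrix, and the rest of the argument (the rescaled twist $(\theta,I)\mapsto(\theta+\bar\omega+mT\bar A J,J)$, the $m$-uniform KAM estimates, and especially the claim that $\psi_\infty$ is independent of $\bar\omega$) collapses without a constant $\bar A$.

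The paper resolves exactly this obstruction, and this is the heart of the proof: one shows that $g(\theta)[U,V]=\langle A(\theta)^{-1}U,V\rangle$ defines a Riemannian metric on $\T^n$ whose geodesic flow has no conjugate points (by comparing iterates of the rescaled $\phi^H_T$ with the geodesic flow via an Euler-scheme lemma and using that $H$ itself has no conjugate points). Then Burago--Ivanov forces $g$ to be flat, so there is a diffeomorphism $\psi$ of $\T^n$ with $D\psi(\theta)^{-1}A(\psi(\theta))D\psi(\theta)^{-T}\equiv\bar A$ constant positive definite. This is what replaces your impossible averaging, and it is also what produces the fixed $\psi_\infty:=\psi$ in iii), manifestly independent of $\bar\omega$ because it is built into the normal form before any KAM step. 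Your justification of iii) (``since $\bar\omega/(mT)\to0$ regardless of $\bar\omega$'') does not by itself give independence of the limit.

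Two smaller points. Your non-degeneracy argument for $\bar A$ from ``Tonelli twist'' is not quite enough: positivity of $A(\theta)$ uses the absence of conjugate points (the paper tracks $S_t=Q_t^TA_t$ and shows it stays positive definite on $(0,T]$). And your flow-invariance via ``uniqueness of the Diophantine KAM torus'' is a different route from the paper's (there one uses that $H\circ i_m$ is constant because the $\tau_{\bar\omega}$-orbits are dense, hence the Lagrangian image is flow-invariant, and then pins down the exact frequency $r/T+\bar\omega/(mT)$ by a careful integer-lattice argument); your version is plausible but you would still need the precise frequency identification, which hides an integer ambiguity.
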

A corollary of this theorem is that the dynamics of the Hamiltonian flow restricted to $\Gc^*_{T,r}$ is itself conjugated to a rational rotation on $\T^n$.
An important ingredient in the proof of this theorem is to provide a normal form for the flow of $H$ in the neighborhood of $\Gc^*_{T,r}$, linking it to the geodesic flow of a flat metric on the torus. This is done in proposition \ref{NF},   with the use of the theorem of Burago and Ivanov, and is of independent interest. 

Another consequence of this theorem is that we can deduce some information on the dynamics restricted to a lot of invariant tori. More precisely, let us recall that an invariant set is {\em strictly ergodic} if there is only one Borel invariant probability measure the support of whose   is in this set, and if the support of this measure is the whole set.

\begin{theorem}
Let $(\phi^H_t)$ be a $C^\infty$ Tonelli flow of $T^*\T^n$ with no conjugate points  and let $\mathfrak F$ be the continuous foliation in invariant Lagrangian tori that is given by theorem \ref{th1}. Then there is a dense $G_\delta$ subset $\Gc$ of $\mathfrak F$ such that, for every $\Tc \in\Gc$, then $\phi^H_{1|\Tc}$ is strictly ergodic.
\end{theorem}


The last section of this article is devoted to studying the entropy of Tonelli Hamiltonians without conjugate points.
 Indeed, it is not hard to see that a regular completely integrable Hamiltonian system has zero topological entropy. When singularities are allowed, the situation can become more complicated, as shown in the article \cite{bolsinov} of Bolsinov and Taimanov.\\
   Hence it seems  natural to ask what can happen for a $C^0$ integrable Tonelli Hamiltonian. In this case, we don't  know the restricted dynamics to all the invariant tori, hence it is not so obvious to decide if the topological entropy is zero or not. An answer to this question is provided by the following:
    \begin{theorem}\label{t4}
   Let $H:T^*\T^n\rightarrow \R$ be a $C^3$ Tonelli Hamiltonian that is $C^0$ integrable. Then for every invariant Borel probability measure, the Lyapunov exponents are zero.
    \end{theorem}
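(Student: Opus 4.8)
The plan is to exploit the $C^0$ integrability of $H$ as follows. Fix an invariant Borel probability measure $\mu$; by the ergodic decomposition it suffices to treat ergodic $\mu$, and such a $\mu$ is supported on a single leaf $\Tc\in\Fc$ of the foliation, since the leaves are invariant and partition $T^*\T^n$. So we work above one Lagrangian graph $\Tc$, which we may write as $\Tc=\{(\theta,c+du_\Tc(\theta))\}$ for a Lipschitz function $u_\Tc$ and a cohomology class $c$; the restricted flow is a continuous flow on $\T^n$. The Lyapunov exponents to be controlled are those of the linearized flow $d\phi_t^H$ acting on $T_{(\theta,p)}(T^*\T^n)$ for $\mu$-a.e.\ $(\theta,p)\in\Tc$.

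The key geometric input is that the tangent spaces to the invariant graphs give invariant subbundles. At a point $z=(\theta,p)\in\Tc$ where $u_\Tc$ is differentiable (a full-measure set, and one can choose $\mu$-generic points to be Lebesgue density points as well), the tangent space $T_z\Tc$ is an $n$-dimensional Lagrangian subspace of $T_z(T^*\T^n)$, and it is invariant under $d\phi_t^H$ wherever it is defined. First I would show that the Lyapunov exponents along $T_z\Tc$ vanish: the foliation is by graphs over $\T^n$, so the dynamics on $\Tc$ is conjugate (via the projection $\pi$) to a homeomorphism of the compact manifold $\T^n$, and the derivative cocycle restricted to $T\Tc$ is, up to the bounded conjugacy by $d\pi$ and $d(\pi|_\Tc)^{-1}$ (bounded because $u_\Tc$ is Lipschitz), cohomologous to the derivative cocycle of that homeomorphism; more robustly, since the leaves form a genuine $C^0$ foliation, the volume of $T^*\T^n$ is preserved and each leaf carries no expansion transverse-to-itself-within-another-leaf. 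The cleanest route is: the $n$ invariant Lagrangian directions tangent to the foliation fill up a neighborhood, so the derivative cocycle preserves a continuous Lagrangian distribution; a symplectic cocycle preserving a continuous Lagrangian subbundle has the property that its exponents on that subbundle are the negatives of the exponents on the quotient symplectic-dual bundle, hence the exponents come in pairs $\{\lambda,-\lambda\}$. Then I would invoke that the sum of all Lyapunov exponents is the exponential growth rate of $|\det d\phi_t^H|$, which is $0$ since $\phi_t^H$ preserves the Liouville volume; combined with the pairing $\lambda\leftrightarrow-\lambda$ forced by the Lagrangian invariant subbundle this does not yet give vanishing, so one needs more.

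The extra ingredient, and the main obstacle, is to rule out a nonzero pair $\{\lambda,-\lambda\}$. Here I would use the no-conjugate-points / integrability structure more strongly: the existence of the \emph{global} $C^0$ foliation by graphs means that through $\mu$-a.e.\ point there pass invariant Lagrangian subspaces that vary continuously, and Green-bundle type arguments apply. Concretely, for a Tonelli Hamiltonian without conjugate points one has the two Green bundles $G_-\subseteq G_+$ at every point, both Lagrangian, both invariant, and the Lyapunov exponents are controlled by the gap between $G_-$ and $G_+$: if $G_-=G_+$ at $\mu$-a.e.\ point then all Lyapunov exponents vanish (this is the standard dynamical criterion, e.g.\ as in Arnaud's work on Green bundles and the Lyapunov exponents of Tonelli flows). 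So the real task reduces to proving $G_-=G_+$ $\mu$-almost everywhere. I would deduce this from the $C^0$ integrability: the tangent distribution to the foliation, where it exists, is an invariant Lagrangian distribution squeezed between $G_-$ and $G_+$ (every invariant Lagrangian distribution is), and the fact that the foliation is \emph{continuous and covers everything} should force, at points of differentiability of the leaf, that $T_z\Tc$ coincides with both Green bundles — intuitively, nearby leaves provide invariant graphs on both "sides" so no strict cone can survive. Making "both sides" precise is the crux: I expect to argue that if $G_-(z)\neq G_+(z)$ on a positive-measure set, then by recurrence (Poincaré) and the known contraction properties of the Green bundles one produces, in the limit, a leaf of the foliation whose tangent graph has infinite slope somewhere, contradicting the Lipschitz (hence finite-slope) nature of the leaves supplied by Theorem \ref{th1}. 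That contradiction-via-recurrence step, reconciling the measurable dynamics of the Green bundles with the topological regularity of the foliation, is where the main work lies; everything else is either standard Oseledets/symplectic linear algebra or a direct consequence of Theorem \ref{th1}.
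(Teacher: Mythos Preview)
Your overall framework is correct: reduce to an ergodic measure supported on a single Lipschitz Lagrangian graph $\Gamma$, invoke the Green bundles $G_-\leqslant G_+$, and use the criterion (from Arnaud's work) that all Lyapunov exponents vanish for $\mu$ if and only if $G_-=G_+$ $\mu$-almost everywhere. The paper follows exactly this strategy. The gap is in the mechanism you propose for forcing $G_-=G_+$.

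Your two suggested mechanisms do not work as stated. First, ``nearby leaves provide invariant graphs on both sides so no strict cone can survive'' is not a valid argument: the Green bundles $G_\pm(z)$ depend only on the linearized dynamics along the orbit of $z$, which stays on the single leaf $\Gamma$; tangent planes to \emph{other} leaves are Lagrangian subspaces at \emph{other} points and do not squeeze $G_-(z)$ against $G_+(z)$. Second, the ``recurrence plus infinite slope'' sketch is too vague to carry weight; there is no obvious way to produce an unbounded derivative of a leaf from $G_-\neq G_+$ on a positive-measure set by iterating. The paper's contradiction is not of this type.

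What the paper actually does, assuming $\mu$ has $d\geqslant 1$ positive exponents, is the following local argument at a $\mu$-regular point $\zeta\in\Gamma$. By Pesin theory there are $C^1$ embedded disks $D^s,D^u\ni\zeta$ (local strong stable/unstable manifolds), and because the leaves are disjoint graphs one shows $D^s\cup D^u\subset\Gamma$. One then uses the precise relation between Green and Oseledets bundles, namely $G_-=T_\zeta D^s\oplus(G_-\cap G_+)$ and $G_+=T_\zeta D^u\oplus(G_-\cap G_+)$, together with $\dim(G_-\cap G_+)=n-d$. Projecting to the base and using the affine foliation by the $(n-d)$-planes $g=D\pi(G_-\cap G_+)$, one builds curves $\zeta^u(t)\in D^u$ and $\zeta^s(t)\in D^s$ with $\pi\zeta^u(t)-\pi\zeta^s(t)\in g$ and computes that $\tfrac{1}{t}(\zeta^u(t)-\zeta^s(t))\to v^u-v^s\notin G_-\cap G_+$. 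On the other hand, writing $\Gamma=\mathrm{graph}(\gamma)$ with $\gamma$ Lipschitz, the semicontinuity of $G_\pm$ and the sandwich $G_-\leqslant D\gamma\leqslant G_+$ force the restriction $D\gamma_{|g}$ to be continuous at $\zeta$ with value $S_\pm(\zeta)_{|g}$; integrating $D\gamma$ along the segment $[\pi\zeta^s(t),\pi\zeta^u(t)]$ (which lies in a translate of $g$) then shows that the same secant limit lies in $G_-\cap G_+$, a contradiction. The point you are missing is this use of the Pesin disks \emph{inside} $\Gamma$ together with the semicontinuity of the Green bundles to control $D\gamma$ along the specific direction $g$; neither recurrence nor ``neighboring leaves'' enters.
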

    This implies that both the metric, and topological entropies must also be $0$. \\
    Observe that the conclusion of theorem \ref{t4} is true for a $C^0$ integrable Tonelli Hamiltonian defined on $T^*M$ for any closed manifold $M$. We give the statement for $T^*\T^n$ because we define Tonelli Hamiltonians just in this case, but the interested reader can find a definition of Tonelli Hamiltonians on any manifold in \cite{Fathi}.
    
  Some interesting questions remain open after this work, as:
  \begin{ques}
  Does a $C^0$ integrable Tonelli Hamiltonian exist that is not $C^1$ integrable?
  \end{ques}
  \begin{ques}
  Can an invariant torus of a $C^0$ integrable Tonelli Hamiltonian flow carry two invariant measures that have not the same rotation number (see the appendix for the definition of the rotation number)?
  \end{ques}

\subsection*{Notations and definitions}
\subsubsection*{Tonelli Lagrangians and Hamiltonians}
Let $\T^n = \R^n / \Z^n$ denote the $n-$dimensional torus, with  
$$\pr : \R^n \longrightarrow \T^n, \ \pi : (x,v) \in T\T^n \longmapsto 
x \in \T^n, \ {\rm or} \ \pi : (x,p) \in T^*\T^n \longmapsto x \in \T^n $$
\noindent 
the canonical projections (or their lifts to $T\R^n$ or $T^*\R^n$ when no confusion is possible). For every $(x,v) \in T\T^n$, the vertical space  
at this point is 
$$V(x,v) = {\rm{Ker}} (D\pi_{\vert T_{(x,v)}T\T^n}),$$
 and for every $(x,p) \in T^*\T^n$, the vertical space  
at this point is $V^*(x,p) = {\rm{Ker}} (D\pi_{\vert T_{(x,p)}T^*\T^n})$.

A function $L:T\T^n \to \R$ is named a Tonelli Lagrangian if it verifies the following three conditions:
\begin{itemize}
\item it is of regularity at least $C^2$,
\item it is super-linear: $\lim\limits_{|v|\to \infty} \frac{L(x,v)}{|v|} =+ \infty$,
\item it is strictly convex in the fibers in the sense that $\partial^2_{v}L$ is everywhere positive definite as a quadratic form.
\end{itemize}
Let $L$ be a ($C^k$, $k \geqslant2$) Tonelli Lagrangian on $T\T^n$, and $\LL$ its ($\Z^n\times\{ 0\}$-periodic) lift 
to $T\R^n$. Its associated Hamiltonian is defined by 
$$\forall (x,p)\in T^*\T^n,\quad H(x,p)=\sup_{v\in T_x\T^n}\big( p\cdot v-L(x,v)\big).$$
It is also Tonelli and its lift to $T^* \R^n$ is $\wtilde H$, the Hamiltonian associated to $\wtilde L$. The Lagrangian and Hamiltonian are linked by the Legendre transform:
$$\Lc : T\T^n \to T^*\T^n,\quad (x,v) \mapsto \big(x, \partial_v L(x,v)\big),$$ as follows:
$$\forall (x,v)\in T\T^n,\quad H\circ \Lc (x,v) + L(x,v) =  \partial_v L(x,v) \cdot v.$$
Note that the Legendre transform is a global $C^{k-1}$ diffeomorphism, but $H$ is of class $C^k$ as $L$. 

The Hamiltonian flow is generated by the vector-field on $T^*\T^n$: 
$$(x,p)\mapsto X_H(x,p)=(\partial_p H,-\partial_x H).$$
 The flow it generates is denoted $(\phi_t^H)_{t\in \R}$ and it is complete and $C^{k-1}$.
 We shall denote by $\varphi_t : T\T^n \longrightarrow T\T^n$ 
the Euler-Lagrange flow of $L$, which is conjugated to $\phi_t^H$ by $\Lc$. Similarly $\fit_t$ (resp. $\wtilde \phi_t^H$) will be the flow of $\LL$ (resp. $\wtilde H$). Recall that $H$ (or equivalently $L$)  is {\sl without conjugate points} 
if one has 
$$ \forall (x,v) \in T\T^n, \ \forall t \in \R^*, \quad
V\big(\varphi_t(x,v)\big) \cap D\varphi_t(x,v)\cdot V(x,v) = \{ 0 \},$$
or equivalently
$$ \forall (x,p) \in T^*\T^n, \ \forall t \in \R^*, \quad
V^*\big(\phi^H_t(x,p)\big) \cap D\phi^H_t(x,p)\cdot V^*(x,p) = \{ 0 \}.$$ 
\subsubsection*{Extremal curves, Tonelli theorem}
We fix a Tonelli Lagrangian $L$.
Let us recall some classical results  (a good reference is \cite{Fathi}). \\
If $\gamma:[a,b]\rightarrow \T^n$ is an absolutely continuous curve, its Lagrangian action is defined by:
$$A_L(\gamma)=\int_a^bL\big(\gamma(t), \gamma'(t)\big)dt.$$

A $C^2$ curve $\gamma: [a, b]\rightarrow \T^n$ is an {\em extremal curve} for $L$ if for each $C^2$ variation $\Gamma: [a, b]\times ]-\varepsilon, \varepsilon[\rightarrow \T^n$   of $\gamma$ with $\Gamma(t,s)=\gamma(t)$ in a neighborhood of $(a, 0)$ and $(b,0)$, we have: $\frac{d{}}{ds}A_L(\Gamma_s)_{s=0}=0$.

A curve $\gamma$ is an extremal curve for $L$ if and only if $(\gamma, \gamma')$ is a solution of the Euler-Lagrange equation
$$
-\frac{d}{dt} \Big( D_v L \big(\gamma (t) , \gamma'(t)\big)\Big)+ D_x L \big(\gamma(t), \gamma'(t)\big) =0 \, . 
$$ 
\begin{theo}[Tonelli] Let $L:T\T^n\rightarrow \R$ be a Tonelli Lagrangian. If $C\in\R$, the subset
$$\Sigma_C=\{ \gamma\in C^{ac}([a, b], \T^n); A_L(\gamma)\leqslant C\}$$
is a compact subset of the set $C^{ac}([a, b], \T^n)$ of absolutely continuous curves endowed with the topology of uniform convergence.

If $x, y\in \T^n$, there exists a curve $\gamma: [a, b]\rightarrow \T^n$ that minimizes the Lagrangian action among all the absolutely continuous curves joining $x$ to $y$. Every such curve is then an extremal curve for $L$.

If $x\in \T^n$ and if $h\in\Z^n$, there exist a loop $\gamma:[a, b]\rightarrow \T^n$ that minimizes the Lagrangian action among all the absolutely curves joining $x$ to $x$ the  homotopy class of whose is $h$. Every such curve is then an extremal curve for $L$.

\end{theo}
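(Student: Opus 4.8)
The plan is to run the direct method of the calculus of variations in three stages. \textbf{Step 1: compactness of $\Sigma_C$.} I would first record two consequences of the Tonelli hypotheses: $L$ is bounded below, say $L\geq -B_0$, and by superlinearity, for each $A>0$ there is $B(A)\geq 0$ with $L(x,v)\geq A|v|-B(A)$ for all $(x,v)$. For $\gamma\in\Sigma_C$ and a measurable set $E\subseteq[a,b]$, splitting $\int_a^b L(\gamma,\gamma')\,dt$ over $E$ and its complement and using $L\geq -B_0$ yields $\int_E|\gamma'|\,dt\leq A^{-1}\big(C+B_0(b-a)+B(A)|E|\big)$; choosing $A$ large and then $|E|$ small shows the family $\{\gamma':\gamma\in\Sigma_C\}$ is bounded in $L^1$ and uniformly integrable, so $\{\gamma:\gamma\in\Sigma_C\}$ is equicontinuous, and equibounded since $\T^n$ is compact. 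Arzel\`a--Ascoli then gives relative compactness for the uniform topology. Closedness follows from lower semicontinuity of $A_L$: if $\gamma_k\to\gamma$ uniformly with $A_L(\gamma_k)\leq C$, Dunford--Pettis extracts a subsequence with $\gamma_k'\rightharpoonup w$ weakly in $L^1$, so $\gamma(t)=\gamma(a)+\int_a^t w$ is absolutely continuous with $\gamma'=w$; then the classical Tonelli semicontinuity theorem (proved by writing $L$ as a countable supremum of continuous functions affine in $v$ and passing to the limit term by term, using uniform convergence of $\gamma_k$ for the coefficients and weak convergence of $\gamma_k'$ for the linear part) gives $A_L(\gamma)\leq\liminf_k A_L(\gamma_k)\leq C$. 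Hence $\Sigma_C$ is compact.

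\textbf{Step 2: existence of minimizers.} For $x,y\in\T^n$, the set of absolutely continuous curves $[a,b]\to\T^n$ from $x$ to $y$ is nonempty (lift and follow a segment) and $A_L\geq -B_0(b-a)$ on it; let $m$ be the infimum and $(\gamma_k)$ a minimizing sequence, so $\gamma_k\in\Sigma_{m+1}$ for $k$ large. By Step 1 a subsequence converges uniformly to some $\gamma$, whose endpoints are still $x$ and $y$, and lower semicontinuity forces $A_L(\gamma)=m$. For the homotopy-class statement I would lift everything to $\R^n$: a loop at $x$ in the class $h\in\Z^n\cong\pi_1(\T^n)$ lifts to an absolutely continuous path in $\R^n$ from a fixed $\tilde x$ to $\tilde x+h$, with the same action computed for $\wtilde L$; the same argument applies, the constraint $\tilde\gamma(b)=\tilde x+h$ being preserved under uniform convergence of the lifts with fixed basepoint, and produces a minimizing loop in the prescribed class.

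\textbf{Step 3: regularity, i.e. every minimizer is an extremal curve.} Let $\gamma$ be any such minimizer. The main obstacle is to show that $\gamma$ is Lipschitz: a priori it is only absolutely continuous and could in principle display the Lavrentiev phenomenon. For an autonomous (indeed time-periodic) Tonelli Lagrangian this is excluded --- this is Tonelli's regularity theorem --- and I would establish it by comparing $\gamma$, on an arbitrary subinterval, with affine competitors and exploiting conservation of energy, which produces a Lipschitz constant depending only on $L$, $b-a$ and $A_L(\gamma)$. Once $\gamma$ is Lipschitz, the first variation with test vectors in $C^\infty_c((a,b),\R^n)$ is legitimate and gives the Euler--Lagrange equation in Du Bois--Reymond (integrated) form: the function $t\mapsto\partial_v\wtilde L(\tilde\gamma(t),\tilde\gamma'(t))$ coincides a.e. with an absolutely continuous function. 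Since $L$ is strictly convex in the fibers, the Legendre map $v\mapsto\partial_v L(x,v)$ is a $C^{k-1}$ diffeomorphism for each $x$, so $\gamma'$ equals a.e. the composition of its continuous inverse with a continuous function of $t$; hence $\gamma'$ has an absolutely continuous representative, $\gamma$ is $C^1$, and it satisfies the pointwise Euler--Lagrange equation. A bootstrap --- solving that equation for $\gamma''$ via the invertibility of $\partial^2_v L$, so that $\gamma''$ is as regular as its right-hand side --- upgrades $\gamma$ to $C^k$. In particular $(\gamma,\gamma')$ is a solution of the Euler--Lagrange equation, so $\gamma$ is an extremal curve in the sense recalled above, which completes the proof.
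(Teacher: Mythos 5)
The paper does not prove this statement: it is recalled as a classical result with a pointer to Fathi's book, so there is no in-paper argument to compare against. Your proof is a correct and essentially complete rendering of the standard one: Tonelli's lower-semicontinuity and compactness theorem via superlinearity, uniform integrability, Arzel\`a--Ascoli and Dunford--Pettis (Step 1); the direct method, with the homotopy constraint handled by lifting to $\R^n$ and noting that the fixed endpoint $\wtilde x + h$ is preserved under uniform convergence (Step 2); and Tonelli regularity followed by Du Bois--Reymond and the Legendre bootstrap to $C^k$ (Step 3). The one place where your sketch is genuinely compressed, and where all the difficulty lives, is the Lipschitz estimate in Step 3: the ``conservation of energy'' you invoke is the second Erdmann condition for absolutely continuous extremals, and establishing it \emph{before} knowing $\gamma$ is Lipschitz is exactly the content of Tonelli's regularity theorem. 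It is not automatic and requires either a careful time-reparameterization argument on the minimizer, or the route taken in Fathi's book (local existence and uniqueness of $C^2$ extremals, with which the AC minimizer is shown to coincide on short subintervals). You correctly name the Lavrentiev phenomenon as the obstacle and name the theorem that dispatches it, so there is no gap of principle; but if written out in full this is the step that would need the most work, and the phrase ``comparing with affine competitors and exploiting conservation of energy'' slightly understates what is involved. The rest --- the splitting estimate for uniform integrability, identifying the uniform limit as AC via weak $L^1$ compactness of the derivatives, the semicontinuity of the action for convex integrands, and the bootstrap from the integrated Euler--Lagrange equation using invertibility of $v\mapsto\partial_v L(x,v)$ and then of $\partial_v^2 L$ --- is the standard chain and is carried out correctly.
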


A curve $\gamma: [a, b]\rightarrow \T^n$ satisfying the conclusion of  the theorem is called a {\em minimizer}.
There is a similar statement (existence of a minimizer with fixed ends) for the lift $\tilde L$.

\subsubsection*{Palais-Smale condition, coercivity}
A good reference for these notions is \cite{Struwe}.\\
We consider a $C^2$ function $E: \Hc\rightarrow \R$ defined on a Hilbert space $\Hc$. \\
The function $E$ is {\em coercive} if $\displaystyle{\lim_{\| u\|\rightarrow +\infty}E(u)=+\infty}$.\\
The function $E$ satisfies the {\em Palais-Smale condition} if  every sequence $(u_m)\in \Hc$ that is   such that $\displaystyle{\lim_{m\rightarrow \infty} \|DE(u_m)\|=0}$ and $\big(E(u_m)\big)$ is bounded has a subsequence that has a limit.\\

 \subsubsection*{Ma\~n\'e, Mather and Fathi theory, Green bundles}
 See the appendix.

\subsection*{Acknowledgements}

The authors are grateful to Albert Fathi for stimulating conversations and ideas.

\section{On $C^0$ integrability}


\setcounter{theorem}{0}

The main theorem of this section is the following:

\begin{theorem}
Let $L$ be a Tonelli Lagrangian on $T\T^n$. Then $L$ 
has no conjugate points if and only if there is a continuous foliation 
of $T^*\T^n$ by Lipschitz, Lagrangian, flow-invariant graphs.
\end{theorem}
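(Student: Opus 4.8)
The plan is to establish the two implications separately; the direction ``no conjugate points $\Rightarrow$ foliation'' is the substantial one.

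\emph{From the foliation to ``no conjugate points''.} This is the short direction. Assume $T^*\T^n$ is foliated by Lipschitz, Lagrangian, $\phi_t^H$-invariant graphs. I would first recall the standard fact that every orbit contained in a Lipschitz, Lagrangian, flow-invariant graph $\Lc$ is a \emph{global minimizer} for the cohomology class of $\Lc$: such a graph has the form $\{(x,c+d_xu(x))\}$ with $u:\T^n\to\R$ Lipschitz and $H(x,c+d_xu)\equiv\alpha(c)$, $u$ is a weak KAM solution, and the orbits of $\Lc$ are calibrated curves, hence minimizers of the action with fixed endpoints on every time interval. Since the leaves cover $T^*\T^n$, every orbit of $\phi_t^H$ --- equivalently, via the Legendre transform, of the Euler--Lagrange flow $\varphi_t$ --- is a global minimizer. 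It then suffices to invoke the Morse index theorem: a segment of a global minimizer minimizes over arbitrarily long time intervals, so it has no interior pair of conjugate points and its endpoints are never conjugate, which is exactly the statement that $L$ has no conjugate points.

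\emph{From ``no conjugate points'' to the foliation.} Here I would follow the weak KAM / Aubry--Mather route sketched in the introduction. Fix $T>0$ and $r\in\Z^n$, and let $\Gc^*_{T,r}\subset T^*\T^n$ be the union, over all basepoints, of the momentum lifts of the freely action-minimizing $T$-periodic loops of homology class $r$; such loops exist by Tonelli's theorem, and since the minimization is over loops with free basepoint the associated Euler--Lagrange extremals are genuine $T$-periodic orbits of $\varphi_t$. The crucial step --- the only place where the hypothesis is genuinely used --- is to show that absence of conjugate points forces $\Gc^*_{T,r}$ to project \emph{onto all of $\T^n$} and to be the graph of a single Lipschitz map. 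Existence of a freely minimizing $T$-loop of class $r$ through an arbitrary $x\in\T^n$, and its uniqueness, are obtained from second-variation / index-form arguments: if two such minimizing loops passed through a common point, concatenating them would produce an extremal carrying a nontrivial Jacobi field with two zeros, i.e.\ a conjugate point; and if some $x_0$ carried no freely minimizing loop, comparing the based minimizer at $x_0$ with the free minimum would again produce a conjugate point. Granting this, $\Gc^*_{T,r}$ is the graph of $c+du$ for a $C^{1,1}$ weak KAM solution $u$ of a suitable cohomology class $c=c(T,r)$ --- differentiability of $u$ at every point coming from the projected Aubry set being all of $\T^n$, and $C^{1,1}$ regularity from $u$ being simultaneously a backward and a forward solution --- so $\Gc^*_{T,r}$ is a Lipschitz Lagrangian graph, it is $\phi_t^H$-invariant, it equals the dual Aubry set of $c$, and it is entirely filled by $T$-periodic orbits.

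It remains to assemble these leaves. The graphs $\Gc^*_{T,r}$ attached to distinct cohomology classes are pairwise disjoint (distinct Aubry sets), and Mather's a priori compactness provides for each of them a Lipschitz bound that is uniform on compact subsets of $T^*\T^n$. Since the rotation vectors $r/T$ are dense in $\R^n$ and $L$ is superlinear (so $\alpha$ is finite, convex and superlinear and every rotation vector is realized by a Mather measure), the union $\bigcup_{T,r}\Gc^*_{T,r}$ is dense in $T^*\T^n$. Given any $(x_0,p_0)\in T^*\T^n$, I would choose graphs $\Gc^*_{T_k,r_k}$ passing through points tending to $(x_0,p_0)$; by the uniform Lipschitz bound and Arzela--Ascoli a subsequence converges in $C^0$ to a graph through $(x_0,p_0)$ which is again Lipschitz, Lagrangian (closedness of the associated $1$-form passes to uniform limits) and $\phi_t^H$-invariant (invariance is a closed condition). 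Being a Lipschitz Lagrangian invariant graph, it is the dual Aubry set of its own cohomology class, and in particular any two leaves of the resulting family are disjoint or equal. The family consisting of the $\Gc^*_{T,r}$ together with these limit leaves then partitions $T^*\T^n$ into Lipschitz, Lagrangian, flow-invariant graphs varying continuously with the point --- the desired continuous foliation.

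\emph{Main obstacle.} The core difficulty is the assertion in the second paragraph: that ``no conjugate points'' forces the freely minimizing $T$-periodic orbits of a fixed homology class to sweep out the whole torus and to form a single Lipschitz graph. This is the one step that must exploit the hypothesis, through second-variation estimates, and it is precisely why the periodic (rational) classes are singled out --- they supply the concrete minimizing loops on which to run the comparison arguments. Everything else (the easy implication, the identification of the leaves with Aubry sets, the uniform Lipschitz estimates, the Arzela--Ascoli gap-filling) is, modulo care, standard weak KAM bookkeeping; a secondary technical point is to verify that $\bigcup_{T,r}\Gc^*_{T,r}$ is genuinely dense and that the limit leaves leave no point of $T^*\T^n$ uncovered.
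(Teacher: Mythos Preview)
Your overall architecture matches the paper's: reduce the hard direction to the construction of the totally periodic graphs $\Gc^*_{T,r}$, identify these (and then every leaf) with Aubry sets, and finally argue that the Aubry sets assemble into a continuous foliation. The easy direction is fine as you state it (the paper simply cites \cite{arnaud4}). But the two substantive steps --- the periodic step and the assembly step --- are both handled differently in the paper, and in the periodic step your sketch has a genuine gap.

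\medskip
\noindent\textbf{The periodic step.} Your proposed mechanism --- ``concatenating two freely minimizing $T$-loops through a common point produces an extremal with a Jacobi field vanishing twice'', and ``comparing the based minimizer at $x_0$ with the free minimum produces a conjugate point'' --- does not work as stated. Concatenating two extremals with different initial velocities gives a broken curve, not an extremal, so there is no Jacobi field to invoke; and a based minimizer with action strictly above the free minimum is still a perfectly good extremal without conjugate points, so no contradiction appears directly. The paper supplies two specific tools here. First, it proves (via a mountain pass argument, modifying $L$ to be quadratic at infinity so that the action is $C^2$ and Palais--Smale on $H^1$) that for every $x$ and $t>0$ the map $v\mapsto \pi\circ\tilde\varphi_t(x,v)$ is \emph{injective}; this is where ``no conjugate points'' is actually used, since the mountain-pass critical curve is forced to carry a conjugate point. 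Second, with that injectivity in hand, it runs a Busemann-type comparison: setting $f(x)=\Ac_T(x,x+r)$, it shows (i) that the based extremal at a point where $f$ attains its maximum is in fact $T$-periodic, and then (ii) by an iterated triangle-inequality estimate that $f$ is constant, so the based extremal at \emph{every} point is a $T$-periodic orbit of class $r$. Uniqueness of the loop through each $x$ then follows immediately from the injectivity result. Neither of these ingredients is visible in your sketch, and I do not see how to close the gap without something of this kind.

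\medskip
\noindent\textbf{The assembly step.} Here your approach and the paper's genuinely diverge. You propose to take $C^0$ limits of the $\Gc^*_{T,r}$ via Arzel\`a--Ascoli and argue that the limits are again Aubry sets, hence pairwise disjoint, and cover. The paper instead shows directly that \emph{for every} $c\in H^1(\T^n,\R)$ the Aubry set $\Ac^*_c$ is a graph over all of $\T^n$ (using that, by injectivity, every closed extremal is periodic, so the approximating loops in the Aubry characterization live on some $\Gc_{T_m,r_m}$ and can be translated to pass through any prescribed basepoint with the same action), proves that distinct $\Ac^*_c$ are disjoint, and then shows that for each fixed $x$ the map $c\mapsto \Ac^*_c\cap T^*_x\T^n$ is continuous, injective and coercive, hence a homeomorphism by invariance of domain. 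This yields the foliation and its continuous parametrization in one stroke. Your Arzel\`a--Ascoli route is plausible, but note the weak points: you need the union of the $\Gc^*_{T,r}$ to be dense in $T^*\T^n$ (density of $r/T$ in $H_1$ does not by itself give density of the corresponding cohomology classes), and the statement ``a Lipschitz Lagrangian invariant graph is the dual Aubry set of its class'' is exactly the kind of fact the paper is simultaneously proving, so you should check you are not using it circularly.
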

Firstly, observe that it is proved in \cite{arnaud4} that if there is a continuous foliation 
of $T^*\T^n$ by Lipschitz, Lagrangian, flow-invariant graphs, then $L$ has no conjugate points. We just have to prove the converse implication. 

The proof  is not a mere rewriting of Heber's arguments. 
It is achieved in two steps, each using arguments of very different nature. 
We first establish (in section \ref{periodic}) that it is possible to cover the torus by 
periodic \ecs the  period and homotopy class of whose may be fixed arbitrarily. 
To do this, we adapt a technique of metric geometry first 
used by Busemann (see \cite{Bus}, section 32) while he was investigating G-spaces without 
conjugated points. The next step (in section \ref{foliation}) is to show that it is possible 
to associate to every $c \in H^1 (\T^n, \R)$ a Lipschitz graph which is 
Lagrangian, flow-invariant, and formed by orbits which minimize the action 
for the Lagrangian $L - \l$, where $\l$ is a closed $1$-form the  cohomology 
class of whose  is $c$. All this part is inspired by the methods developed in 
\cite{arnaud4}, and uses results of weak KAM theory (see \cite{Fathi}). 
To conclude the proof, we use a topological argument to verify that the 
union of the previously constructed graphs is the whole of the cotangent space.

%
%
%
%
%
%
%
%
%
%
%

\subsection{Some properties satisfied by Lagrangians without conjugate points}

\medskip
In this section, we establish some technical results on Lagrangians without conjugate 
points. They will be used repeatedly in the sequel. 

\smallskip
Let $t > 0$ and $x \in \R^n$. Consider the following map, which is reminiscent of the 
exponential map in Riemannian geometry:  
$$ F : v \in T_x\R^n \longmapsto \pi \circ \fit_{t}(x,v) \in \R^n.$$

\begin{proposition}\label{inj}
The map $F$ is injective.
\end{proposition}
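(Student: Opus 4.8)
The plan is to recast the proposition as a uniqueness statement for extremal curves with prescribed endpoints in prescribed time, and then to establish that uniqueness by a min--max (mountain pass) argument whose point is that, in the absence of conjugate points, \emph{every} such extremal is a strict local minimum of the action, so there is no ``pass'' available between two distinct ones. Concretely, saying that $F$ is injective is equivalent to saying that for each $y\in\R^n$ there is at most one extremal $\gamma:[0,t]\to\R^n$ of $\LL$ with $\gamma(0)=x$ and $\gamma(t)=y$: the extremals of $\LL$ starting at $x$ are exactly the curves $s\mapsto\pi\circ\fit_s(x,v)$, one has $F(v)=\gamma(t)$, and distinct $v$'s give curves with distinct initial velocities. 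So I would fix $y$ and suppose, for contradiction, that $\gamma_0\neq\gamma_1$ are two extremals from $x$ to $y$ over $[0,t]$.

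Next I would work with the action $A_\LL$ on the space $\Omega$ of absolutely continuous curves $c:[0,t]\to\R^n$ with $c(0)=x$, $c(t)=y$. By the Tonelli theorem its sublevel sets $\{A_\LL\le C\}$ are compact for the topology of uniform convergence, and the curves at which $A_\LL$ is stationary (equivalently the extremals, which are $C^2$ after the usual regularity bootstrap) are precisely the extremals from $x$ to $y$ over $[0,t]$. The crucial input from the hypothesis is that \emph{no conjugate points implies that every such extremal $\gamma$ is a strict, isolated local minimum of $A_\LL$}: a Jacobi field $J$ along $\gamma$ with $J(0)=0$ has the form $J(s)=D\fit_s\big(\gamma(0),\dot\gamma(0)\big)\cdot w$ with $w$ vertical, and $J(s_0)=0$ for some $s_0\in(0,t]$ would put $D\fit_{s_0}(\cdot)\,w$ into $V\big(\fit_{s_0}(\cdot)\big)\cap D\fit_{s_0}(\cdot)\,V(\cdot)=\{0\}$, hence $w=0$; so there is no point conjugate to $\gamma(0)$ along $\gamma$ on $(0,t]$, and the classical Morse index theorem for Tonelli Lagrangians (available because $\partial^2_v L>0$) gives that the second variation of $A_\LL$ at $\gamma$ has index and nullity both $0$, and is in fact coercive --- the absence of a conjugate point on the whole closed interval $[0,t]$ lets one solve the associated Riccati equation on $[0,t]$. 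In particular $\gamma_0$ and $\gamma_1$ are strict local minima of $A_\LL$.

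Now $\Omega$ is path-connected, $A_\LL$ has the two distinct strict local minima $\gamma_0\neq\gamma_1$, and along the affine path $c_\tau=(1-\tau)\gamma_0+\tau\gamma_1$ the action stays bounded by $C_0:=\max_\tau A_\LL(c_\tau)<\infty$, so the mountain-pass geometry is in place. Running the min--max argument --- exactly as in the classical Morse theory of geodesics and of Tonelli extremals, with compactness of sublevel sets (the Tonelli theorem) in the role usually played by the Palais--Smale condition (compare \cite{Struwe}) --- produces a stationary curve $z$ of $A_\LL$ that is \emph{not} a local minimum (a minimax critical point between two strict local minima is never a local minimum). But $z$ is then an extremal from $x$ to $y$ over $[0,t]$, and by the previous paragraph every such extremal is a local minimum of $A_\LL$: contradiction. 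Hence $\gamma_0=\gamma_1$, so $F$ is injective. (One can also avoid the explicit mountain pass: in the compact set $\{A_\LL\le C_0\}$ each strict local minimum $\gamma_i$ has an open basin of curves that an action-decreasing deformation pushes to it; these basins are disjoint, open, and cover a connected set, forcing a single extremal.)

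The easy parts are the reduction, the connectedness, and the formal shape of the deformation/min--max argument. The two genuinely technical cores, and the likely obstacles, are: (i) the Morse-index statement --- turning ``$\gamma$ has no conjugate point on $(0,t]$'' into ``the fixed-endpoint action has a coercive, positive-definite second variation at $\gamma$'', hence a strict isolated local minimum; and (ii) carrying out the min--max step rigorously in the correct space of curves, extracting a true extremal from the mountain-pass level using only the compactness supplied by the Tonelli theorem --- note one cannot simply work in $H^1$, on which $A_\LL$ need be neither coercive nor Palais--Smale for a merely superlinear Lagrangian, so one must exploit the compactness of action sublevel sets directly, via metric-space deformation theory or an explicit Morse-theoretic deformation of curves.
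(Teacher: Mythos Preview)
Your overall strategy matches the paper's: reduce to uniqueness of fixed-endpoint extremals, use the no-conjugate-points hypothesis to conclude that every such extremal is a strict local minimum of the action (the paper cites \cite{Cont}, Corollary~4.1, for this), and then derive a contradiction by a mountain-pass argument producing a critical point that is \emph{not} a local minimum. So the architecture is the same.

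The substantive difference is in how the min--max step is made rigorous. You propose to run the deformation argument directly on $A_{\LL}$, using the Tonelli compactness of sublevel sets in $C^0$ as a substitute for Palais--Smale. You correctly flag that this is the delicate point: $A_{\LL}$ is not $C^2$ on the space of absolutely continuous curves, and on $H^1$ a merely superlinear Lagrangian need not satisfy Palais--Smale, so one would need either a broken-extremal finite-dimensional reduction or a metric critical-point theory to extract the mountain-pass extremal. This can be done, but it is genuinely technical, and your sketch stops short of carrying it out; the parenthetical ``basin'' argument is likewise incomplete (you need a gradient-like deformation to define the basins, which is precisely the missing ingredient).

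The paper sidesteps this by a simple trick worth knowing: it replaces $\LL$ by a Tonelli Lagrangian $\LLL$ that agrees with $\LL$ on $\R^n\times B(0,R)$ and is \emph{quadratic at infinity}. Then $A_{\LLL}$ is $C^2$ on the Hilbert space $H^1$ of fixed-endpoint curves, is coercive, and satisfies Palais--Smale, so the classical Ambrosetti--Rabinowitz mountain-pass lemma applies directly and yields an extremal $\gamma$ for $\LLL$ with conjugate points. One then closes the argument with an a priori bound: any extremal with action below the max along the linear homotopy between $\gamma_1$ and $\gamma_2$ must have velocity bounded by a constant depending only on $L$ (a compactness argument using the Euler--Lagrange flow), so choosing $R$ large enough forces $\gamma$ to live where $\LLL=\LL$, and hence $\gamma$ has conjugate points for $L$ --- the desired contradiction. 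This modification-plus-a-priori-bound device is exactly the clean resolution of the obstacle you identified, and it is what your write-up is missing.
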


Before proving this, let us establish some consequences: 

\begin{corollary}\label{diffeo}
The map $F$ is a $C^{k-1}$ diffeomorphism.
\end{corollary}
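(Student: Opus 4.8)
The plan is to deduce that $F$ is a global $C^{k-1}$ diffeomorphism of $T_x\R^n$ onto $\R^n$ from four ingredients: $F$ is $C^{k-1}$, $F$ is everywhere a local diffeomorphism, $F$ is injective (Proposition~\ref{inj}), and $F$ is surjective. The first point is immediate: since $L$, hence $\LL$, is $C^k$, the Euler--Lagrange flow $\fit_t$ is $C^{k-1}$, and $F(v)=\pi\big(\fit_t(x,v)\big)$ is a composition of $C^{k-1}$ maps.

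For the local diffeomorphism property I would differentiate $F$. Writing a vector $w\in T_x\R^n$ as the velocity at $\tau=0$ of $\tau\mapsto(x,v+\tau w)$, the corresponding vector $(0,w)\in T_{(x,v)}(T\R^n)$ is vertical, i.e. $(0,w)\in V(x,v)$, and $DF(v)\cdot w=D\pi\big(D\fit_t(x,v)\cdot(0,w)\big)$. If this vanishes then $D\fit_t(x,v)\cdot(0,w)\in\ker D\pi=V\big(\fit_t(x,v)\big)$, while at the same time $D\fit_t(x,v)\cdot(0,w)\in D\fit_t(x,v)\cdot V(x,v)$; since $t\neq 0$ and $L$ has no conjugate points, the intersection of these two spaces is $\{0\}$, so $D\fit_t(x,v)\cdot(0,w)=0$, and then $(0,w)=0$ because $D\fit_t(x,v)$ is a linear isomorphism. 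Hence $DF(v)$ is an injective endomorphism of $T_x\R^n$, therefore invertible, so $F$ is a local $C^{k-1}$ diffeomorphism at every $v$.

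Being a local diffeomorphism, $F$ is an open map; combined with the injectivity from Proposition~\ref{inj}, this shows that $F$ is a homeomorphism from $T_x\R^n$ onto the open set $F(T_x\R^n)\subset\R^n$, whose inverse is of class $C^{k-1}$ since it is locally one of the inverses produced in the previous step. To finish it suffices to prove surjectivity. Given $y\in\R^n$, I would invoke the Tonelli theorem (existence of a minimizer with fixed ends) for the lift $\LL$ to obtain a minimizer $\gamma:[0,t]\to\R^n$ with $\gamma(0)=x$ and $\gamma(t)=y$; such a minimizer is an extremal curve, so $\fit_s(x,\gamma'(0))=(\gamma(s),\gamma'(s))$ for all $s\in[0,t]$, whence $F(\gamma'(0))=\pi\big(\fit_t(x,\gamma'(0))\big)=\pi(\gamma(t),\gamma'(t))=y$. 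Thus $F(T_x\R^n)=\R^n$, and $F$ is a $C^{k-1}$ diffeomorphism of $T_x\R^n$ onto $\R^n$.

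The only delicate point is the differential computation in the second paragraph: one must keep careful track of the identification of $T_{(x,v)}(T\R^n)$ with $\R^n\times\R^n$ and of the vertical subbundle, so that the no-conjugate-points hypothesis applies exactly as stated. Surjectivity is where the superlinearity of $L$ enters, through Tonelli's existence theorem, but it presents no real difficulty; and the passage from ``local diffeomorphism $+$ injective $+$ surjective'' to ``global diffeomorphism'' is routine point-set topology.
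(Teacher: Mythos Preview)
Your proof is correct and follows essentially the same approach as the paper's own argument: both use Proposition~\ref{inj} for injectivity, the Tonelli existence theorem for surjectivity, and the no-conjugate-points hypothesis to show that $DF(v)$ is injective (hence an isomorphism) via the vertical-subspace formulation. The only cosmetic difference is that the paper writes the inclusion $i:v\mapsto(x,v)$ explicitly and applies the definition of ``no conjugate points'' to $Di(v)\cdot w$, whereas you write the same vector as $(0,w)$; the content is identical.
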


\begin{proof}

This application is of class $C^{k-1}$. It is injective by proposition \ref{inj}, and surjective because of the
  Tonelli theorem. So we only need to check that $F$ is a local diffeomorphism. 
Let $i : v \in T_x\R^n \longmapsto (x,v) \in T\R^n$ be the canonical injection and  
$(x',v') = \fit_{t}(x,v)$. The differential of $F$ at $v$ is 
$$ DF(v) : w \in T_x\R^n \longmapsto D\pi(x',v') \circ D\fit_{t} (x,v) \circ Di(v) 
\cdot w \in T_{x'}\R^n.$$
\noindent
Let $w$ belong to the kernel of $DF(v)$. Then $D\fit_{t} (x,v) \circ Di(v) \cdot w \in 
V(x',v')$ and $Di(v) \cdot w \in V(x,v)$, so that $Di(v) \cdot w = 0$ (because the Lagrangian 
has no conjugate points), and hence $w=0$. This proves that $DF(v)$ is an isomorphism for 
every $v \in T_x\R^n$ and therefore $F$ is a local diffeomorphism.

\end{proof}

Given two points $x$ and $y$ in $\R^n$ and a positive real number $t$, let us introduce 
$$\Ac_t(x,y) = \inf \{ A_L(c), c : [a,b] \longrightarrow \R^n, \ {\rm with} \ 
c(a) = x, \ c(b) = y, \ {\rm and} \ b-a = t \},$$
\noindent
where $A_L(c) = \int _a ^b \LL\big(c(t),c'(t)\big) dt$ is the action of the curve 
$c : [a,b] \longrightarrow \R^n$. Note that the inf (taken over the set of absolutely continuous curves) is actually a min 
(because of the Tonelli theorem). Moreover, thanks to the hypotheses made on $L$, this inf is realized by a $C^2$ curve which is a piece of trajectory of the Euler-Lagrange flow $\fit$. Hence $\Ac_t$ is clearly a continuous 
function, with $\Ac_t(x+r,y+r) = \Ac_t(x,y)$ for every $r \in \Z^n$. As 
a consequence of proposition \ref{inj}, one has: 

\begin{corollary}\label{}
 
Every \ec $c : \R \longrightarrow \R^n$ minimizes the action 
between any two of its points: for every $a , b \in \R$ with $a < b$, 
$\Ac_{b-a}\big(c(a),c(b)\big) = A_L\big(c_{\vert [a,b]}\big)$.

\end{corollary}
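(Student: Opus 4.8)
The plan is to reduce the statement to the injectivity of the ``time-$t$ exponential map'' $F$ established in Proposition \ref{inj}, combined with uniqueness of solutions of the Euler-Lagrange equation. Fix $a<b$ and set $t=b-a$, $x=c(a)$, $y=c(b)$. As recorded just above the corollary, the infimum defining $\Ac_t(x,y)$ is attained, and (by the Tonelli theorem in its version for the lift $\LL$) any curve realizing it is $C^2$ and is a piece of trajectory of the Euler-Lagrange flow $\fit$. Let $\gamma:[a,b]\longrightarrow \R^n$ be such a minimizer: $\gamma(a)=x$, $\gamma(b)=y$, $A_L(\gamma)=\Ac_t(x,y)$, and $\big(\gamma(s),\gamma'(s)\big)=\fit_{s-a}\big(x,\gamma'(a)\big)$ for all $s\in[a,b]$.

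Next I would compare $\gamma$ with $c$ through their velocities at time $a$. Since $\gamma$ is an Euler-Lagrange trajectory issuing from $x$, we have $y=\gamma(b)=\pi\circ\fit_{t}\big(x,\gamma'(a)\big)=F\big(\gamma'(a)\big)$, where $F:v\in T_x\R^n\longmapsto \pi\circ\fit_{t}(x,v)\in\R^n$ is exactly the map of Proposition \ref{inj} (with base point $x$ and time $t$). On the other hand $c$ is itself an \ec, so $\big(c(s),c'(s)\big)=\fit_{s-a}\big(x,c'(a)\big)$, and in particular $y=c(b)=F\big(c'(a)\big)$. By Proposition \ref{inj}, $F$ is injective; hence $c'(a)=\gamma'(a)$.

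Finally, $c_{\vert [a,b]}$ and $\gamma$ are two solutions of the Euler-Lagrange equation, i.e.\ two integral curves of the Euler-Lagrange vector field on $T\R^n$, taking the same value $\big(x,c'(a)\big)$ at time $a$; by the uniqueness theorem for ordinary differential equations they coincide on $[a,b]$. Therefore
$A_L\big(c_{\vert [a,b]}\big)=A_L(\gamma)=\Ac_t(x,y)=\Ac_{b-a}\big(c(a),c(b)\big)$, which is the claimed equality.

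As for the main difficulty: essentially all the substance is already in place, since Proposition \ref{inj} (whose proof is the real work) is assumed and the existence and regularity of a fixed-time, fixed-endpoint minimizer has been stated above. The only point demanding a little care is to present both $c$ and the minimizer $\gamma$ as emanating from the \emph{common} base point $x=c(a)$, so that injectivity of $F$ can be applied to identify the initial velocities; once that is done, ODE uniqueness closes the argument immediately.
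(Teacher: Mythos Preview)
Your proof is correct and is exactly the argument the paper has in mind: the corollary is stated there as an immediate consequence of Proposition~\ref{inj}, without further details, and the natural way to unpack this is precisely what you wrote---pick a Tonelli minimizer $\gamma$ from $c(a)$ to $c(b)$ in time $t=b-a$, observe that both $\gamma'(a)$ and $c'(a)$ are sent to $c(b)$ by $F$, conclude $\gamma'(a)=c'(a)$ by injectivity, and finish with ODE uniqueness.
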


Let us now come back to the proof of proposition \ref{inj}.
\begin{proof}[Proof of proposition \ref{inj}]
  To this purpose, we reason by contradiction. Assume therefore that 
there exist two vectors $v_1,v_2\in   T_x \R^n$ such that $F(v_1)=F(v_2)=y$. Let 
us moreover denote by $\gamma_i(s)= \pi\circ \fit_s(x,v_i)$, for $i\in\{1,2\}$ 
and $s\in [0,t]$. 

Using standard techniques (see for example \cite[section 5]{abbondandolo}, we modify $\LL$ and consider a new (periodic) Lagrangian $\LLL$ which 
is still Tonelli, which coincides with $\LL$ on $\R^n\times B(0,R)$, for some $R>0$ 
to be determined later and which is quadratic at infinity (this means there exists 
$R'>R>0$ and a smooth $\Z^n$ periodic function $V$ on $\R^n$ such that for all $x\in \R^n$, there are 
a linear form $l_x$ and a positive definite quadratic form $Q_x$, such that 
$\LLL(x,v)= V(x)+l_x(v)+Q_x(v)$ as soon as $\|v\|>R'$). 

This technical requirement of being quadratic at infinity ensures that the action 
functional $A_{\LLL}$ associated to $\LLL$ is $C^2$ when restricted to the Hilbert 
space $H^1=W^{1,2}$ of curves which are absolutely continuous, with $L^2$ derivative (see for instance \cite[proposition 4.1]{abbondandolo}. Moreover, it then verifies the Palais-Smale condition (see \cite[proposition 4.2]{abbondandolo}). However, by 
construction, any extremal curve for $\LLL$ which remains in $\R^n\times B(0,R)$ 
does not have conjugate points.
It follows from Corollary 4.1 in \cite{Cont} that extremal curves without conjugate 
points are strict local minima of the action functional (the Hessian of the action functional is positive definite at such a curve). In particular, there exists an 
$\e>0$ and $\alpha>0$ such that for any non trivial variation 
$\eta : [0,t]\to \R^n$ in $H^1$ verifying $\eta(0)=\eta(t)=0$ and such 
that $\|\eta\|_1=\e$ then 
$$A_{\LLL}(\gamma_i+\eta)> A_{\LLL}(\gamma_i)+\alpha , \quad i\in \{1,2\}.$$

Let $E$ be the Hilbert space of $H^1$ curves $\eta : [0,t]\to \R^n$ verifying 
$\eta(0)=\eta(t)=0$ equipped with the norm $\|\eta\|_E $ induced by the norm 
$\|\cdot\|_1$ on $H^1$, and consider the restriction of $A_{\LLL}$ to the affine 
space $V=\gamma_1+E=\gamma_2+E$. As already mentioned,
 $A_{\LLL{|V}}$ is $C^2$ and  verifies the Palais-Smale condition. It also inherits coercivity from the superlinearity 
of $\LLL$. 

We now apply the Ambrosetti and Rabinowitz mountain pass lemma (\cite[theorem 6.1 p.109]{Struwe}). 
The lemma asserts that the value 
$C=\inf\limits_{\Gamma}\max\limits_{s\in [0,1]} A_{\LLL}\big( \Gamma(s)\big)$, where $\Gamma$ 
ranges in all the homotopies from $\gamma_1$ to $\gamma_2$ in $V$, is a critical 
value of  $A_{\LLL{|V}}$. More precisely there exists  some curve $\gamma \in V$ realizing the $\inf \max$
in the sense that $\gamma$ is an extremal curve of ${A_{\LLL}}_{|V}$, $A_{\LLL} (\gamma)=C$
and for any $\varepsilon >0$, there is a homotopy $H_{\varepsilon} \in \Gamma$
 such that
 $$
\max_{s\in [0,1]} A_{\LLL} (H_{\varepsilon}(s)) \leqslant C + \varepsilon  
$$ 
and $H_{\varepsilon} ([0,1])$ intersects the ball in $V$ of center $\gamma$ and radius $\varepsilon$.
Obviously, $\gamma$ must not be a strict local minimum of $A_{\LLL{|V}}$ which means 
it contains conjugate points for $\LLL$. In order to reach a contradiction, we 
only need to prove that $\gamma$ is supported in $\T^n\times B(0,R)$ which we 
will see is automatic if $R$ is chosen big enough.

Let $\Gamma_0$ be the linear homotopy: $s\mapsto (1-s)\gamma_1+s\gamma_2$. 
Assume that  $R$ is large enough such that $\Gamma_0$ is supported in $\T^n\times B(0,R)$.
Let 
$$C'=\max_{s\in[0,1]}A_{\LLL}\big( \Gamma_0(s)\big)=\max_{s\in[0,1]}A_{L}\big( \Gamma_0(s)\big)\geqslant C.$$
Note that $C'$ depends only on $L$. The contradiction is now a direct consequence 
of the following lemma:

\begin{lemma}
Let $T>0$ and $M>0$. There exists a constant $R>0$ such that any critical curve 
$\delta : [0,T]\to \T^n$ with action $A_L(\delta) <M$ is $R$-Lipschitz. Moreover, 
this holds true for any other Tonelli Lagrangian which coincides with $L$ on 
$\T^n\times B(0,R)$.
\end{lemma}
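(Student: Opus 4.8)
The plan is to extract a uniform Lipschitz bound for action-bounded extremal curves directly from the conservation of energy along the Euler–Lagrange flow, combined with the superlinearity of $L$. First I would recall that any critical curve $\delta:[0,T]\to\T^n$ of $A_L$ is a trajectory of the Euler–Lagrange flow, hence the energy $E_L(\delta(t),\delta'(t)) = \partial_v L(\delta(t),\delta'(t))\cdot\delta'(t) - L(\delta(t),\delta'(t)) = H\big(\Lc(\delta(t),\delta'(t))\big)$ is constant along $\delta$; call this constant $e$. The key point is then to bound $e$ in terms of $M$ and $T$: by the mean value theorem there is $t_0\in[0,T]$ with $A_L(\delta) = T\cdot L(\delta(t_0),\delta'(t_0))$, so $L(\delta(t_0),\delta'(t_0)) < M/T$; and since $L$ is superlinear (in particular bounded below, say by $-c_0$ after fixing a continuous lower bound on the compact base), this forces $|\delta'(t_0)|$ to lie in a compact set $K_0 = K_0(M,T)$, for otherwise $L(\delta(t_0),\delta'(t_0))/|\delta'(t_0)|$ would already exceed $M/(T|\delta'(t_0)|)$ by superlinearity. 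Hence $e = E_L(\delta(t_0),\delta'(t_0))$ is bounded by a constant $e_0 = e_0(M,T)$ depending only on $L$, $M$, $T$.

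Next I would use superlinearity again, this time on the level set of the energy. The energy $E_L(x,v) = \partial_v L(x,v)\cdot v - L(x,v)$ is also superlinear in $v$ uniformly in $x\in\T^n$: indeed $\partial_v L(x,v)\cdot v \geqslant L(x,v+w) + \partial_v L(x,v)\cdot(v) - L(x,v) - \ldots$, more directly, by convexity $L(x,0)\geqslant L(x,v) + \partial_v L(x,v)\cdot(0-v)$, i.e. $\partial_v L(x,v)\cdot v \geqslant L(x,v) - L(x,0)$, so $E_L(x,v)\geqslant -L(x,0)$; and to get growth one uses that for the Tonelli $L$, $E_L(x,v)\to+\infty$ as $|v|\to\infty$ uniformly in $x$ (a standard fact following from strict convexity plus superlinearity of $L$). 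Therefore the sublevel set $\{(x,v): E_L(x,v)\leqslant e_0\}$ is compact, and projecting to the fibre coordinate gives a radius $R = R(e_0) = R(M,T)$, depending only on $L$, $M$, $T$, with $|\delta'(t)|\leqslant R$ for all $t\in[0,T]$. This is the desired Lipschitz constant.

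For the final sentence of the lemma — that the same $R$ works for any Tonelli Lagrangian $L'$ agreeing with $L$ on $\T^n\times B(0,R)$ — I would argue by a bootstrapping/continuity argument. The issue is circular at first glance (the bound on $|\delta'|$ is needed to know $L'=L$ along $\delta$, but we used $L'=L$ to get the bound), so the clean way is: fix the $R$ produced above for $L$, and suppose $\delta$ is a critical curve for such an $L'$ with $A_{L'}(\delta)<M$. Run the same argument: there is $t_0$ with $L'(\delta(t_0),\delta'(t_0))<M/T$, and if $|\delta'(t_0)| < R$ then $L'=L$ there and the energy $e$ of $\delta$ equals $E_L$ at that point, hence $e\leqslant e_0$; then for every $t$, if ever $|\delta'(t)|\geqslant R$ we'd need $E_{L'}(\delta(t),\delta'(t)) = e \leqslant e_0$, but outside $B(0,R)$ we have no control on $E_{L'}$ — so instead one should choose $R$ slightly more generously at the start (replace $R$ by a larger $R_1$ such that the energy sublevel $\{E_L\leqslant e_0\}$ is contained in $\T^n\times B(0, R)$ with $R < R_1$, and prove the Lipschitz bound $R$ while only ever invoking $L$ on $B(0,R_1)$). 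Concretely I would: (1) pick $R_1$ with $\{E_L\leqslant e_0\}\subset \T^n\times B(0,R)$ and $R<R_1$; (2) state the lemma's constant as $R_1$ (harmless, it only makes the conclusion weaker); (3) for $L'$ agreeing with $L$ on $B(0,R_1)$, use a connectedness argument on $\{t: |\delta'(t)|<R_1\}$: it is open, nonempty (contains $t_0$ if we also check $|\delta'(t_0)|<R_1$, which holds because $|\delta'(t_0)|\in K_0\subset B(0,R_1)$ after enlarging $R_1$ to contain $K_0$ too), and closed in $[0,T]$ because on its closure $L'=L$ so energy conservation pins $|\delta'|$ inside $B(0,R)\subset B(0,R_1)$, preventing escape to the boundary; hence it is all of $[0,T]$.

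The main obstacle is precisely this last circularity: making the "$L'$ coincides with $L$ on a ball of the very radius we are trying to prove" statement non-vacuous and non-circular. The fix is bookkeeping — separate the radius $R$ appearing in the hypothesis from the (smaller, $L$-intrinsic) radius that energy conservation actually enforces, and run a clopen-subset continuity argument — but it must be written carefully. Everything else (energy conservation along extremals, the mean value trick to bound the energy, compactness of energy sublevels from superlinearity) is standard Tonelli theory and should be quoted rather than belabored.
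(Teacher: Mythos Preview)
Your argument is correct and follows a genuinely different route from the paper's. Both begin identically: the mean value theorem produces $t_0$ with $L\big(\delta(t_0),\delta'(t_0)\big) < M/T$, and superlinearity of $L$ then bounds $|\delta'(t_0)|$ by some $r>0$. From there the paper argues dynamically---since $(\delta,\delta')$ is a trajectory of the Euler--Lagrange flow, the whole orbit lies in the compact set $K = \bigcup_{|t|\leqslant T} \varphi_t\big(\T^n \times \overline{B(0,r)}\big)$, and one takes $R$ with $K \subset \T^n \times B(0,R)$---whereas you use energy conservation: $e = E_L\big(\delta(t_0),\delta'(t_0)\big)$ is bounded by some $e_0$, and compactness of the energy sublevel (from superlinearity of $E_L = H\circ\Lc$) confines $|\delta'(t)|$ for all $t$. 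Your approach is a touch more conceptual, trading a flow-compactness argument for a scalar first integral; the paper's avoids having to discuss separately why $E_L$ has compact sublevels.

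For the second assertion you are considerably more explicit than the paper, which dispatches it in one line, and your identification of the circularity and the clopen fix is exactly right. One small point survives, however: to place the mean-value time $t_0$ inside your set $J=\{t:|\delta'(t)|<R_1\}$, you need $L'(x,v) > M/T$ for \emph{all} $|v|>r$, not only for $r<|v|<R_1$ where $L'=L$. This does hold, by strict convexity of the Tonelli Lagrangian $L'$: enlarge $R_1$ so that the fibrewise minimiser $v^*_L(x)$ of $L(x,\cdot)$ lies in $B(0,R_1)$ for every $x$; then $v^*_L(x)$ is a critical point, hence the unique global minimiser, of $L'(x,\cdot)$ as well, and $L'(x,\cdot)$ is nondecreasing along rays from $v^*_L(x)$, forcing $L'(x,v)>M/T$ for $|v|\geqslant R_1$ too. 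With that remark your clopen argument closes cleanly.
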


\begin{proof}
Let $\delta$ be as in the lemma.
By coercivity of $L$, let $r>0$ be such that $L(x,v)>M/T$ as soon as $|v|>r$. Since  
$A_L(\delta) <M$ there exists $s_0\in [0,1]$ such that $|\delta'(s_0)|\leqslant r$. 
In particular, $\delta$ being an extremal curve, $(\delta,\delta')$ is a trajectory 
of the Euler-Lagrange flow of $L$ which yields that 
$$\forall s\in [0,1],\quad \big(\delta(s),\delta'(s)\big) =\varphi_{s-s_0}\big(\delta(s_0),\delta'(s_0)\big)\in \bigcup_{t\in[-1,1]} \varphi_t\big(\T^n\times \overline{B(0,r)}\big) :=K,$$
which is obviously compact. Therefore, it is enough to take $R$ such that $K\subset \T^n\times B(0,R)$.

The second part of the lemma the follows from the fact that the previous argument 
only depends on the restriction of $L$ to  $\T^n\times B(0,R)$.

\end{proof}
\end{proof}
  
\begin{lemma}\label{tri} 
For any positive real numbers $t$ and $t'$, for any points $x$, 
$y$, and $z$ in $\R^n$, the following inequality holds:     
$$ \Ac_{t+t'}(x,z) \ \leqslant \  \Ac_t(x,y) + \Ac_{t'}(y,z),$$
\noindent
It will be referred to  as the triangular inequality in the sequel. In this inequality, 
equality occurs if and only if $y=c(t)$, where $c : \R \longrightarrow \R^n$ denotes 
the extremal curve with $c(0)=x$ and $c(t+t')=z$.
\end{lemma}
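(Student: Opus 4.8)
The plan is to obtain the inequality by concatenating minimizers, and to read off the equality case from the Tonelli theorem together with Corollary~\ref{diffeo}. For the inequality: by the Tonelli theorem (applied to the lift $\LL$), the infima defining $\Ac_t(x,y)$ and $\Ac_{t'}(y,z)$ are attained, so we may pick curves $c_1:[0,t]\to\R^n$ with $c_1(0)=x$, $c_1(t)=y$, $A_L(c_1)=\Ac_t(x,y)$ and $c_2:[0,t']\to\R^n$ with $c_2(0)=y$, $c_2(t')=z$, $A_L(c_2)=\Ac_{t'}(y,z)$. Form the concatenation $c:[0,t+t']\to\R^n$, $c(s)=c_1(s)$ for $s\in[0,t]$ and $c(s)=c_2(s-t)$ for $s\in[t,t+t']$; this is an absolutely continuous curve from $x$ to $z$ on an interval of length $t+t'$, and additivity of the integral gives $A_L(c)=A_L(c_1)+A_L(c_2)=\Ac_t(x,y)+\Ac_{t'}(y,z)$. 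Since $\Ac_{t+t'}(x,z)$ is the infimum of $A_L$ over all such curves, $\Ac_{t+t'}(x,z)\le A_L(c)$, which is the triangular inequality.

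For the equality case, I would first note that there is a \emph{unique} extremal curve joining $x$ to $z$ in time $t+t'$: by Corollary~\ref{diffeo} the map $v\mapsto\pi\circ\fit_{t+t'}(x,v)$ is a diffeomorphism of $\R^n$, so there is exactly one $v_0$ with $\pi\circ\fit_{t+t'}(x,v_0)=z$, and the extremal curve in question is $\bar c(s)=\pi\circ\fit_s(x,v_0)$. Assume first that $y=\bar c(t)$. By the corollary that every extremal curve minimizes the action between any two of its points, $A_L(\bar c_{\vert[0,t]})=\Ac_t(x,y)$, $A_L(\bar c_{\vert[t,t+t']})=\Ac_{t'}(y,z)$ and $A_L(\bar c_{\vert[0,t+t']})=\Ac_{t+t'}(x,z)$; adding the first two equalities and using additivity of the action yields $\Ac_{t+t'}(x,z)=\Ac_t(x,y)+\Ac_{t'}(y,z)$. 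Conversely, assume $\Ac_{t+t'}(x,z)=\Ac_t(x,y)+\Ac_{t'}(y,z)$. Then the concatenation $c$ from the first paragraph satisfies $A_L(c)=\Ac_{t+t'}(x,z)$, so $c$ minimizes the action among absolutely continuous curves from $x$ to $z$ on $[0,t+t']$; by the Tonelli theorem $c$ is then an extremal curve, and by the uniqueness just noted $c=\bar c$, whence $y=c(t)=\bar c(t)$.

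I do not expect any serious obstacle. The only subtle point is that the concatenated minimizer $c$, a priori merely piecewise $C^2$ with a possible corner at $y$, is genuinely a $C^2$ extremal curve — this is exactly where ``every minimizer is an extremal curve'' (the Tonelli theorem) is used, in combination with the uniqueness of an extremal curve with prescribed endpoints and prescribed elapsed time coming from Corollary~\ref{diffeo}.
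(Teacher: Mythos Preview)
Your proof is correct and follows essentially the same route as the paper: concatenate two minimizers to get the inequality, and for the equality case observe that the concatenation is then a minimizer, hence an extremal curve, hence equal to the unique extremal $c$ from $x$ to $z$ in time $t+t'$ (the paper invokes Proposition~\ref{inj} where you invoke Corollary~\ref{diffeo}, but this is the same uniqueness). You are in fact slightly more thorough than the paper, since you also spell out the easy direction $y=c(t)\Rightarrow$ equality, which the paper leaves implicit.
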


\begin{proof}
Let $c_1: \R \longrightarrow \R^n$ be the \ec with $c_1(0) = x$ and $c_1(t) = y$; 
and $c_2: \R \longrightarrow \R^n$ the \ec with $c_2(t) = y$ and $c_2(t+t') = z$. 
If we concatenate $c_{1 \vert]-\infty, t]}$ and $c_{2 \vert [t, + \infty,]}$, we 
get a curve $\g: \R \longrightarrow \R^n$ with $\g(0) = x$ and $\g(t+t') = z$; hence 
$\Ac_{t+t'}(x,z) \leqslant A_L (\g _{\vert [0,t+t']}) = \Ac_t (x,y) + \Ac_{t '} (y,z)$. 
If we have equality, then $\g_{[0,t+t']}$ is an action-minimizing curve and therefore 
an \ec \!\!. According to proposition 1, $\g$ is equal to $c$. In particular $y = \g(t) = c(t)$.
\end{proof}

\begin{lemma}\label{reg} 
Let $T > 0$ and $r \in \Z^n$. Define a vector field $X$ on $\R^n$ as such: 
if $x \in \R^n$, $X(x) = c'(0)$, where $c$ is the \ec with $c(0) = x$ and $c(T) = x+r$. 
Then $X$ is $\Z^n$-invariant, so   it induces a vector field on $\T^n$, also denoted 
by $X$. This vector field is of class $C^{k-1}$.
\end{lemma}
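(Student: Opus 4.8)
The plan is to realize $X$ as the second component of (the inverse of) a single global $C^{k-1}$ diffeomorphism built from the exponential-type map $F$ of Proposition \ref{inj}, after checking that $X$ is well defined and $\Z^n$-invariant. \emph{Well-definedness:} fix $x\in\R^n$. Tonelli's theorem applied to the lift $\LL$ yields a curve $c:[0,T]\to\R^n$ minimizing the action among absolutely continuous curves from $x$ to $x+r$ on the time interval $[0,T]$; it is an \ec, so it extends to all of $\R$ by completeness of $\fit$. With $v=c'(0)$ one has $\pi\circ\fit_T(x,v)=x+r$, i.e. $F(v)=x+r$ for the map $F$ of Proposition \ref{inj} with $t=T$ and base point $x$; write this map $F_x$ to record the base point. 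Since $F_x$ is injective, $v$ is the unique vector with $F_x(v)=x+r$, hence $c$ is the unique \ec with $c(0)=x$, $c(T)=x+r$, and $X(x):=c'(0)=F_x^{-1}(x+r)$ is unambiguously defined.

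\emph{$\Z^n$-invariance:} as $\LL$ is $\Z^n$-periodic (being the lift of a Lagrangian on $T\T^n$), the Euler–Lagrange flow satisfies $\fit_t(x+s,v)=\fit_t(x,v)+(s,0)$ for every $s\in\Z^n$; so if $c$ is the \ec from $x$ to $x+r$, then $c+s$ is the \ec from $x+s$ to $(x+s)+r$, with the same initial velocity, whence $X(x+s)=X(x)$ and $X$ descends to a vector field on $\T^n$.

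\emph{Regularity:} introduce the $C^{k-1}$ map $\mathcal F:\R^n\times\R^n\to\R^n\times\R^n$, $\mathcal F(x,v)=\big(x,\pi\circ\fit_T(x,v)\big)=(x,F_x(v))$. It is injective: the first coordinate determines $x$, and then injectivity of $F_x$ (Proposition \ref{inj}) determines $v$. In the obvious splitting, $D\mathcal F(x,v)$ is block lower-triangular with diagonal blocks $\Id$ and $DF_x(v)$, and the proof of Corollary \ref{diffeo} shows $DF_x(v)$ is an isomorphism, so $D\mathcal F(x,v)$ is an isomorphism and $\mathcal F$ is a local $C^{k-1}$ diffeomorphism. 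An injective local diffeomorphism is a $C^{k-1}$ diffeomorphism onto its open image $\mathcal U$ (and $\mathcal U=\R^n\times\R^n$ by Tonelli, which gives surjectivity). By the first step $(x,x+r)\in\mathcal U$ and $\mathcal F^{-1}(x,x+r)=(x,X(x))$ for every $x$; since $\mathcal F^{-1}$ is $C^{k-1}$, the map $x\mapsto X(x)=\pr\!_2\big(\mathcal F^{-1}(x,x+r)\big)$ is $C^{k-1}$ on $\R^n$, hence so is the induced vector field on $\T^n$.

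\emph{Main obstacle:} everything is routine once one isolates the right object. The subtlety is that $F_x$ depends on the base point $x$, so it cannot be inverted pointwise; replacing it by the suspended map $\mathcal F(x,v)=(x,F_x(v))$, whose differential is block-triangular with the already-controlled invertible block $DF_x(v)$ on the diagonal, reduces the regularity of $X$ to the (global) inverse function theorem. Proposition \ref{inj} and Corollary \ref{diffeo} supply the real content; existence comes from Tonelli and $\Z^n$-invariance from periodicity.
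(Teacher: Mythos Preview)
Your proof is correct and follows essentially the same approach as the paper: the paper applies the implicit function theorem to $\Fc(x,v)=\pi\circ\fit_T(x,v)-(x+r)$, while you apply the inverse function theorem to the suspended map $\mathcal F(x,v)=(x,\pi\circ\fit_T(x,v))$, which is the standard way of deducing the implicit function theorem from the inverse function theorem. In both cases the only substantive point is the invertibility of $D_vF_x(v)$, which comes from the absence of conjugate points exactly as in Corollary~\ref{diffeo}; your treatment of well-definedness and $\Z^n$-invariance is more explicit than the paper's, but there is no genuine difference in strategy.
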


\begin{proof} 
We shall apply 
the implicit function theorem to the function 
$$\Fc : (x,v) \in T\R^n  = \R^n \times \R^n \longmapsto \pi \circ \fit_T(x,v) - (x + r).$$
\noindent
This will ensure that the map sending $x \in \R^n$ to the unique vector $v = X(x)$ for which 
$\Fc(x,v) = 0$ is of class $C^{k-1}$. All we need to do is to check that the differential 
of $\Fc$ with respect to $v$ at a point $(x,v) \in T\R^n$ is invertible. This is done in 
the same way as in the proof of corollary \ref{diffeo}, using the fact that $L$ has no \cp.
\end{proof}

\subsection{Construction of totally periodic Lagrangian tori}\label{periodic}

\bigskip
The goal of this section is to give a proof of the following:  

\begin{proposition}\label{P8} 
Let $T > 0$ and $r \in \Z^n$. There exists a subset $\Gc_{T,r}$ 
of $T\T^n$ such that

\smallskip \noindent
$(1)$ $\Gc_{T,r}^*=\Lc (\Gc_{T,r})$ is a Lagrangian $C^{k-1}$ submanifold of $T^*\T^n$; 

\smallskip \noindent
$(2)$ $\Gc_{T,r}$ is a flow-invariant graph; 

\smallskip \noindent
$(3)$ $\forall (x,v) \in \Gc_{T,r},\quad \varphi_T(x,v)=(x,v)$; 

\smallskip \noindent
$(4)$ For all $(x,v) \in \Gc_{T,r}$, the \ec $c : s \in [0,T] \longmapsto 
\pi \circ \varphi_s(x,v) \in \T^n$ is a smooth loop with homotopy class 
$r$ and its action does not depend on $(x,v)$.
\end{proposition}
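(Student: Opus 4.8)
The plan is to build $\Gc_{T,r}$ as the image of the vector field $X$ constructed in Lemma \ref{reg}. More precisely, set $\Gc_{T,r} = \{ (x, X(x)) : x \in \T^n \}$, where for each $x$, $X(x) = c'(0)$ and $c$ is the unique \ec with $c(0) = x$ and $c(T) = x+r$. By Lemma \ref{reg} this is a $\Z^n$-invariant $C^{k-1}$ section of $T\T^n$, so it is automatically a graph over the base, and its image in $T\T^n$ is a $C^{k-1}$ submanifold. This gives the first half of $(2)$ immediately; the remaining points require identifying $\Gc_{T,r}$ with a set of genuinely periodic orbits.

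First I would prove $(3)$, the periodicity. Fix $(x,v) \in \Gc_{T,r}$, so $v = X(x)$ and the extremal $c(s) = \pi\circ\varphi_s(x,v)$ satisfies $c(T) = x + r$. I claim $c(s+T) = c(s) + r$ for all $s$; this follows because $s \mapsto c(s+T) - r$ is an \ec (the Euler-Lagrange equation is autonomous and $\Z^n$-periodic) that agrees with $c$ at the two endpoints $s = 0$ and $s = ?$... more cleanly: $s\mapsto c(s+T)-r$ and $s\mapsto c(s)$ are two extremal curves through $x$ at time $0$ with velocities $c'(T)$ and $c'(0)=X(x)$; by Lemma \ref{reg} applied at the point $c(T)-r = x$, the velocity $X(x)$ is the \emph{unique} vector whose extremal lands at $x+r$ after time $T$, and $c'(T)$ has exactly that property (the extremal starting at $c(T)$ reaches $c(2T) = c(T)+r$). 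Hence $c'(T) = X(c(T))$, i.e. $\varphi_T(x,v) = (c(T), c'(T))$ lies in $\Gc_{T,r}$ with $c(T) = x+r \equiv x$ in $\T^n$, and iterating plus using that $\varphi_T$ fixes the base point forces $\varphi_T(x,v) = (x,v)$. In particular $\Gc_{T,r}$ is flow-invariant (it is a union of closed orbits), completing $(2)$, and the loop $c_{|[0,T]}$ closes up with homotopy class $r$, giving the topological part of $(4)$.

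For the action statement in $(4)$, I would show the function $x \mapsto A_L(c_{x,|[0,T]})$ is locally constant, hence constant on the connected torus $\T^n$. The clean way is to invoke the minimization: by Corollary (the one right after Corollary \ref{diffeo}), each $c_x$ minimizes the action among competitors with the same endpoints in $\R^n$ and the same time $T$, so its action equals $\Ac_T(x, x+r)$; the map $x \mapsto \Ac_T(x,x+r)$ is continuous and, using $\Ac_T(x+h,y+h)=\Ac_T(x,y)$ together with the fact that all these curves are in one continuous family of minimizers, one shows it is independent of $x$. Alternatively, differentiate $x\mapsto A_L(c_x)$ directly: the first-variation formula gives $\frac{d}{dx}A_L(c_x)\cdot h = \partial_v L(c_x(T),c_x'(T))\cdot(h+\, \cdot\, ) - \partial_v L(c_x(0),c_x'(0))\cdot h$, and because $c_x(s+\text{shift})$ relations make the boundary terms cancel (the two endpoint velocities are identified via periodicity, and the base displacement is the same $h$ at both ends), the derivative vanishes.

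Finally $(1)$: $\Gc_{T,r}^* = \Lc(\Gc_{T,r})$ is a $C^{k-1}$ submanifold since $\Lc$ is a $C^{k-1}$ diffeomorphism and $\Gc_{T,r}$ is one. To see it is Lagrangian, I would use that it is a graph $\{(\theta, \eta(\theta))\}$ in $T^*\T^n$ invariant under the Hamiltonian flow and consisting of periodic orbits; the standard argument is that the pullback of the symplectic form to an exact... here, since the curves $c_x$ are all action-minimizing with a common action value (by $(4)$), the $1$-form $\eta$ is closed — this is exactly the classical computation relating the graph of $d(\text{value function})$ to minimizers, or one can note that a $C^1$ flow-invariant Lagrangian-candidate graph swept out by minimizers of a fixed cohomology-adjusted action is forced to be Lagrangian by the vanishing of the first variation. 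The main obstacle I expect is $(3)$/the Lagrangian property: getting the periodicity argument genuinely airtight requires carefully exploiting the uniqueness in Lemma \ref{reg} (equivalently Proposition \ref{inj}) rather than just the existence of $X$, and proving closedness of $\eta$ cleanly demands the constant-action fact from $(4)$ — so the logical order must be graph $\Rightarrow$ periodicity $\Rightarrow$ constant action $\Rightarrow$ Lagrangian, each step feeding the next.
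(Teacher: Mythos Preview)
Your argument for $(3)$ is circular, and this is the central gap. You claim that $c'(T)=X\big(c(T)\big)$ because ``the extremal starting at $c(T)$ reaches $c(2T)=c(T)+r$''. But $c(2T)=c(T)+r$ is precisely the periodicity statement you are trying to prove; nothing you have written forces the extremal, after reaching $x+r$ at time $T$, to continue on to $x+2r$ at time $2T$. Uniqueness (Proposition~\ref{inj} / Lemma~\ref{reg}) tells you there is a unique extremal from $x$ to $x+r$ in time $T$, but it says nothing about where that extremal goes afterwards. The paper closes this gap with a genuinely different idea: one studies the function $f(x)=\Ac_T(x,x+r)$, first proves periodicity at a \emph{maximum} $b$ of $f$ by forcing equality in the triangular inequality (Lemma~\ref{tri}), and then shows $f$ is constant so that every point is a maximum. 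The action functional and the extremality of $b$ are essential; there is no purely ODE/uniqueness route to periodicity here.

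Your argument for the Lagrangian property is also incomplete. Constant action does not by itself imply that the $1$-form $\eta$ with graph $\Gc^*_{T,r}$ is closed: the first-variation computation you sketch only yields $d\big(\Ac_T(x,x+r)\big)=0$, which is the constant-action statement again, not $d\eta=0$. Writing $\eta(x)=-\partial_x S(x,x+r)$ with $S=\Ac_T$ the generating function, closedness would require the mixed Hessian $\partial^2_{xy}S$ to be symmetric, which it is not in general. The paper instead proves $T_{(x,p)}\Gc^*_{T,r}=G_+(x,p)$ via the Green-bundle criterion: since $\phi^H_T$ is the identity on $\Gc^*_{T,r}$, any tangent vector $w$ has $\|D(\pi\circ\phi^H_{-nT})\cdot w\|$ bounded, forcing $w\in G_+$. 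Your logical order ``periodicity $\Rightarrow$ constant action $\Rightarrow$ Lagrangian'' is the right instinct, but both the first arrow (as argued) and the last arrow need substantially more than what you have supplied.
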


\bigskip
Let us fix $T>0$ and $r \in \Z^n$, and call $f$ the function defined by $f(x) = \Ac_T(x,x+r)$, 
where $x \in \R^n$. As $f : \R^n \longrightarrow \R$ is continuous and $\Z^n$-periodic, we have 
two points $a$ and $b$ in $\R^n$ such that $f(a) = \min\limits_{\R^n}f$ and $f(b) = \max\limits_{\R^n}f$. 
We first show that there is an \ec on the torus with the following properties: it is periodic, 
$T$ being a period; it contains the point $\pr(b)$; and its homotopy class is $r$.  

\begin{lemma}\label{per} 
The \ec $c$ with $c(0) = b$ and $c(T) = b + r$ is invariant under the 
translation of vector $r$. More precisely, we have 
$$\forall s \in \R, \quad c(s+T) = c(s) + r.$$
\noindent
Therefore $\pr \circ c$ is an \ec on the torus that is $T$-periodic, goes through $\pr(b)$ and 
the homotopy class of whose  is $r$.
\end{lemma}

\begin{proof}
Let $x$ be any point in $\R^n$. Using the fact that $\LL$ is $\Z^n$-periodic and the triangular 
inequality, we get    
$$ \Ac_{2T}(b,x) = \Ac_{2T}(b + r , x + r) \leqslant \Ac_{T}(b + r , x) + \Ac_{T}(x , x + r) 
= \Ac_{T}(b + r , x) + f(x).$$
\noindent
We now choose $x = c(2T)$. As $c$ is an \ec which contains the points $c(0) = b$, $c(T) = b + r$ 
and $c(2T) = x$, we have $\Ac_{2T}(b,x) = \Ac_{T}(b,b+r) + \Ac_{T}(b+r,x) = 
f(b) +  \Ac_{T}(b+r,x)$, and hence $\Ac_{T}(b+r,x) = \Ac_{2T}(b,x) - f(b)$. The last inequality 
then becomes
$$ \Ac_{2T}(b,x) \leqslant \Ac_{2T}(b,x) - f(b) + f(x) \leqslant \Ac_{2T}(b,x),$$
\noindent
because $f$ attains its maximum at the point $b$. As a consequence, all the above inequalities 
are in fact equalities. In particular, lemma \ref{tri} tells us that $x = \g(T)$, where $\g$ is the \ec 
with $\g(0) = b + r$ and $\g(2T) = x + r$.

So we get two extremal curves, namely $s \longmapsto \g(s)$ and $s \longmapsto c(s) + r$, that both
go through $b + r$ (at $s = 0$) and $x + r$ (at $s = 2T$). Proposition \ref{inj} implies that they 
are equal; hence $c(T) + r = \g(T)$, i.e. $b + 2r = x$. We then apply the same argument to 
$s \longmapsto c(s) + r$ and $s \longmapsto c(s+T)$: these two \ecs coincide at $s=0$ and at 
$s=T$ (because $b + 2r = x$), 
so they are equal. 
\end{proof}

\begin{lemma}\label{}
The function $f$ is constant.
\end{lemma}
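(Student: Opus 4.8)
The idea is to play the periodic extremal curve through the maximum of $f$ (produced by Lemma~\ref{per}) off against a sub‑additivity argument for the iterated action. I would fix $b\in\R^n$ with $f(b)=\max_{\R^n}f$ (such a point exists since $f$ is continuous and $\Z^n$-periodic), let $c$ be the extremal curve of Lemma~\ref{per}, so that $c(s+T)=c(s)+r$ for all $s\in\R$, and abbreviate, for $n\geqslant 1$,
$$v_n(x)=\Ac_{nT}(x,x+nr).$$

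First I would record two elementary estimates. Iterating the triangular inequality (Lemma~\ref{tri}) and using the $\Z^n$-invariance of $\Ac_T$ — which gives $\Ac_T(x+jr,x+(j+1)r)=f(x)$ for every $j\in\Z$ — one obtains
$$v_n(x)\leqslant n\,f(x)\qquad\text{for all }x\in\R^n,\ n\geqslant 1.$$
On the other hand, $c_{|[0,nT]}$ is an extremal curve joining $b$ to $b+nr$, hence action‑minimizing by the corollary to Proposition~\ref{inj}, so $v_n(b)=A_L\big(c_{|[0,nT]}\big)$; since $c(s+T)=c(s)+r$ forces $c'(s+T)=c'(s)$ while $\LL$ is $\Z^n$-periodic, the map $s\mapsto\LL\big(c(s),c'(s)\big)$ is $T$-periodic, whence
$$v_n(b)=n\,A_L\big(c_{|[0,T]}\big)=n\,\Ac_T(b,b+r)=n\,f(b)=n\max_{\R^n}f.$$

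Next I would produce a lower bound for $v_n(x)$ for arbitrary $x$. Concatenating an extremal curve from $b$ to $x$ over a time interval of length $T$, then the minimizer realizing $v_n(x)$, then an extremal curve from $x+nr$ to $b+(n+2)r$ over a time interval of length $T$, and applying Lemma~\ref{tri} twice, one gets
$$v_{n+2}(b)\ \leqslant\ \Ac_T(b,x)+v_n(x)+\Ac_T(x,b+2r),$$
the last term coming from the $\Z^n$-invariance (translation by $nr$). Since $v_{n+2}(b)=(n+2)\max_{\R^n}f$ and $C_x:=\Ac_T(b,x)+\Ac_T(x,b+2r)$ is a finite constant, this gives $v_n(x)\geqslant (n+2)\max_{\R^n}f-C_x$. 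Combining with the first estimate,
$$f(x)\ \geqslant\ \frac1n\,v_n(x)\ \geqslant\ \frac{(n+2)\max_{\R^n}f-C_x}{n}\ \xrightarrow[n\to\infty]{}\ \max_{\R^n}f,$$
so $f(x)\geqslant\max_{\R^n}f$, hence $f(x)=\max_{\R^n}f$; as $x$ is arbitrary, $f$ is constant.

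The real content is the decision to pass to the $n$-fold iterates $v_n$ and average: the maximizing periodic orbit realizes the iterated inequality \emph{with equality} for every $n$, which pins $v_n(b)/n$ to $\max_{\R^n}f$, while $v_n(x)\leqslant n f(x)$ forces $f(x)\geqslant\max_{\R^n}f$ in the limit. Once this is seen everything else is mechanical and uses only material already in the excerpt: finiteness, continuity and $\Z^n$-invariance of $\Ac_T$, the triangular inequality, the minimizing property of extremal curves, and Lemma~\ref{per}; in particular no appeal to Mather's $\beta$-function is needed, even though morally the proof identifies $\max_{\R^n}f$ with the minimal average action in the class $(r,T)$. The only thing requiring care is the bookkeeping in the lower bound — checking that each concatenated curve has total time an exact multiple of $T$ and that the $\Z^n$-translations of the endpoints line up — which is routine.
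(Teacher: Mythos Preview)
Your proof is correct and follows essentially the same sub-additivity/averaging argument as the paper: both use Lemma~\ref{per} to get $\Ac_{nT}(b,b+nr)=n\max f$ exactly, then sandwich via the triangular inequality and divide by $n$. The only cosmetic difference is that the paper specializes to the minimum point $a$ and bounds $v_n(b)$ from above by a path through $a$, obtaining $nf(b)\leqslant \Ac_T(b,a+r)+(n-2)f(a)+\Ac_T(a,b+r)$, whereas you work with an arbitrary $x$ and reverse the direction of the sandwich; the content is identical.
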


\begin{proof} 
We only need to show that $\max\limits_{\R^n} f = f(b) \leqslant f(a) = \min\limits_{\R^n} f$. An immediate consequence 
of lemma \ref{per} is that $c(nT) = b + nr$ for all integer $n$; therefore if $n \geqslant 1$, we have  
$$ \Ac_{nT}(b , b + nr) = n \Ac_T(b , b + r) = n f(b). $$
\noindent
On the other hand, by use of the triangular inequality, we get that for any $n \geqslant 3$
$$ \Ac_{nT}(b , b + nr) \leqslant \Ac_T(b , a + r) 
+ \sum_{i=1}^{n-2} \Ac_{T}\big(a + ir , a + (i+1)r \big) 
+ \Ac_T\big(a + (n-1)r , b + nr\big).$$
\noindent
These two lines and the fact that $\Ac_T$ is $\Z^n$-invariant imply that 
$$ n f(b) \leqslant \Ac_T(b , a + r) + (n-2)f(a) + \Ac_T(a , b + r).$$
\noindent
Dividing this by $n$ and letting $n$ go to infinity yields $f(b) \leqslant f(a)$.
\end{proof}

\medskip
As the function $f$ attains its maximum everywhere, we may apply lemma \ref{per} to every 
point in $\R^n$. Therefore for every $x \in \T^n$ there exists an \ec $c_x$ containing 
$x$ (we may assume $c_x(0) = x$) which is $T$-periodic, the homotopy class of whose is $r$ 
and whose action $A_L(c_{x \vert [0,T]})$ does not depend on $x$. Remark that this 
curve is unique. For if $c$ and $\g$ are two such curves, we can lift them to $\R^n$. 
Hence we obtain two \ecs $C$ and $\Gamma$, chosen in such a way that $C(0) = \Gamma(0)$. 
As $c(T) = \g(T) = x$ and $c$ and $\g$ belong to the same homotopy class, $C(T) = \Gamma(T)$. 
Using proposition \ref{inj} once again, we conclude that $C = \Gamma$, and that $c = \g$.
 
\medskip
We now define $\Gc_{T,r}$ as the set for all vectors tangent to the curves $c_x$. 
More formally,  
$$\Gc_{T,r} \ = \ \big\{ \big(x,c_x'(0)\big),\quad x \in \T^n \big\}.$$
\noindent
Properties of the curves $c_x$  imply that $\Gc_{T,r}$ satisfies $(2)$, $(3)$ and $(4)$ of proposition \ref{P8}. 
Thanks to lemma \ref{reg}, $\Gc_{T,r}$ is of class $C^{k-1}$. 

It remains to check that $\Gc_{T,r}^*$  is Lagrangian. This requires the use of the Green bundles. Recall 
(see \cite{Cont} and \cite{arnaud1} for details) that if $s \in \R \longmapsto (x,p) = 
\phi_s^H(x_0,p_0) \in T^*\T^n$ is an orbit of the Hamiltonian flow that is free of conjugate points, one may define 
a bundle $G_+$ (called the (positive) Green bundle) by 
$$G_+(x,p) = \lim _{t \longrightarrow +\infty} D \phi_t^H \big( \phi_{-t}^H(x,p) \big) \cdot V^*\big( \phi^H_{-t}(x,p) \big).$$
\noindent
Every $G_+(x,p)$ is a Lagrangian subspace of $T_{(x,p)}T^*\T^n$, and this bundle is invariant by the Hamiltonian flow: 
$D \phi_t^H G_+(x,p) = G_+\big( \phi^H_t(x,p) \big)$ for all $t \in \R$. We now establish that at every 
$(x,p) \in \Gc^*_{T,r}$, the tangent space $T_{(x,p)}\Gc^*_{T,r}$ is equal to $G_+(x,p)$, and is 
therefore Lagrangian.  

We will make use of the following criterion (see \cite{Cont} and \cite{arnaud1}): if $w \in T_{(x,p)}(T^*\T^n)$, 
then $$ w \notin G_+(x,p) \ \Longrightarrow \ \lim_{t \rightarrow + \infty} 
\vv D (\pi \circ \phi^H_{-t})(x,p) \cdot w \vv = + \infty,$$
\noindent
where $\vv \cdot \vv$ denotes the Euclidean norm. Assume $w \in T_{(x,p)}\Gc^*_{T,r}$. As we know that  
$\phi^H_{T \vert \Gc^*_{T,r}} = {\rm Id}_{\vert \Gc^*_{T,r}}$, the same equality holds for $\phi^H_{-nT}$,  
for all integer $n$. Passing to the differential, we get $D \phi^H_{-nT}(x,p) \cdot w = w$, hence 
$D (\pi \circ \phi^H_{-nT})(x,p) \cdot w = D \pi (x,p) \cdot w$ has constant norm. The criterion mentioned 
above implies that $w \in G_+(x,p)$. This proves that $T_{(x,p)}\Gc^*_{T,r} \subset G_+(x,p)$; but these two 
spaces have the same dimension, so they coincide.

\begin{remark}\label{0section}\rm
We do not know if there is an easy way to describe these sets $\Gc_{T,r}$, except in the case where 
$r=0$. In fact, $\Gc_{T,0} = \{ (x,0),\ \  x \in \T^n \}$. To prove this, let $x \in \R^n$ and let $c$ 
be the \ec with $c(0) = c(T) = x$, so that $\big(x,c'(0)\big) \in \Gc_{T,0}$. Denote by $\g$ the \ec with 
$\g(0) = x$ and $\g({T \over 2}) = x$. As a consequence of lemma 4, we have $\g(T) = x$ and hence $\g$ 
and $c$ are equal. In particular, $c({T \over 2}) = \g({T \over 2}) = x$. Repeating the same argument, 
we get $c({T \over 2^n}) = x$ for every $n \geqslant 1$;  therefore $\big(x,c'(0)\big)=(x,0)$.
\end{remark}
\begin{remark}\label{rmkinj}\rm
A remarkable consequence of proposition 2 is that if $c : \R \longrightarrow \T^n$ is an extremal 
curve,  then $c$ is either injective or periodic. For if we can find two real numbers $a$ and $b$ with 
$a < b$ and $c(a) = c(b)$, we lift $c$ to $\R^n$ and so we have an \ec $C : \R \longrightarrow 
\R^n$, and $C(b) = C(a) + r$, with $r \in \Z^n$; lemma \ref{per} now tells us that $c = \pr \circ C$ is 
periodic, and any vector tangent to $c$ belongs to $\Gc_{b-a,r}$.  
\end{remark}
 
\subsection{A continuous foliation of $T^*\T^n$}\label{foliation}

\bigskip
In this section, we will construct a continuous foliation of $T^*\T^n$, with the help of the 
sets $\Gc_{T,r}$ introduced above. The method used here is very close to the one introduced 
in \cite{arnaud4}, where a similar result is proven under the assumption that the tiered Man\'e  
set is the whole cotangent space.

For all $T > 0$ and $r \in \Z^n$, recall that $\Gc_{T,r}^*$ is $\Lc(\Gc_{T,r})$. We first show that 
each of these sets is in fact an Aubry set associated to a cohomology class.

\begin{proposition}\label{}

Let $T > 0$ and $r \in \Z^n$. There is a cohomology class 
$c\in H^1(\T^n,\R)$ such that $\Gc_{T,r}^* = \Ac^*_c = \Nc_c^*$.

\end{proposition}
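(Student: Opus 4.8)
The plan is to identify the right cohomology class $c$ and then verify the two equalities $\Gc_{T,r}^* = \Ac_c^*$ and $\Ac_c^* = \Nc_c^*$ using the characterizations from Mather--Ma\~n\'e--Fathi theory recalled in the appendix. First I would observe that the periodic orbits making up $\Gc_{T,r}$ all have the same action (property $(4)$ of Proposition \ref{P8}) and the same homology class $r$ after rescaling; combined with the fact (established in the previous subsection via the Green bundles) that $\Gc_{T,r}^*$ is an invariant Lagrangian graph, the natural candidate is the cohomology class $c$ such that the restriction to $\Gc_{T,r}^*$ of the Liouville form minus a closed $1$-form $\lambda$ representing $c$ is exact — equivalently, $c$ is the cohomology class of the graph $\Gc_{T,r}^*$ in the sense that $\Gc_{T,r}^* = \{(x, \lambda_x + d u_x)\}$ for a $C^1$ function $u$. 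Because $\Gc_{T,r}^*$ is a Lagrangian graph, such a $c$ exists and is unique.

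Next I would show that every orbit in $\Gc_{T,r}$ is calibrated for the Lagrangian $L - \lambda$ and a suitable constant (the critical value $\alpha(c)$). The key input is Corollary 1 in the excerpt: every extremal curve on $\R^n$ minimizes the action between any two of its points. Lifting an orbit of $\Gc_{T,r}$ to $\R^n$, it is a globally minimizing extremal curve; subtracting the closed form $\lambda$ does not change which curves are extremal and only shifts actions by cohomological pairings, so these curves are still globally minimizing for $L - \lambda$. One then checks that the common action value forces the relevant Ma\~n\'e critical value to be exactly $\alpha(c)$, so that these orbits are $(u, \lambda, \alpha(c))$-calibrated, i.e. they lie in the Aubry set $\tilde \Ac_c$ (equivalently $\Ac_c^*$ on the Hamiltonian side). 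This gives the inclusion $\Gc_{T,r}^* \subseteq \Ac_c^*$.

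For the reverse inclusion $\Ac_c^* \subseteq \Gc_{T,r}^*$, I would use that $\Gc_{T,r}^*$ is already a Lagrangian graph that is invariant and consists of $c$-minimizers, hence it is a KAM-type graph whose projection is all of $\T^n$; a standard fact (the Aubry set is contained in any invariant Lagrangian graph made of calibrated curves, and more precisely the Aubry set is a graph over its projection) then forces $\Ac_c^*$ to be contained in it. Alternatively, invoke that on $\Gc_{T,r}^*$ the form $\lambda + du$ provides a global calibration, so the whole graph is in the Aubry set and, being a graph over the whole base, it cannot be properly contained in another graph-like Aubry set. Finally, the equality $\Ac_c^* = \Nc_c^*$ (Aubry set equals Ma\~n\'e set) follows because, again by Corollary 1 and the graph property, there are no semi-static orbits outside the Aubry set: any orbit in $\Nc_c^*$ would have its $\alpha$- and $\omega$-limit in $\Ac_c^* = \Gc_{T,r}^*$, but since $\Gc_{T,r}^*$ is an invariant Lagrangian graph covering the whole base, any calibrated curve with endpoints-in-the-Aubry-set behavior must itself lie on the graph.

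The main obstacle I expect is the careful bookkeeping of critical values and calibration constants: one must verify that the common action of the periodic orbits in $\Gc_{T,r}$ is exactly the Ma\~n\'e critical value $\alpha(c)$ for the cohomology class $c$ of the graph — not merely $\leq$ or $\geq$ it — and that the minimization property lifts correctly from $\R^n$ to the quotient $\T^n$ after subtracting $\lambda$. Getting this pinned down requires combining the uniqueness of the periodic extremal through each point (Remark \ref{rmkinj} and Lemma \ref{per}) with the weak KAM characterization of $\alpha(c)$ as an infimum over closed $1$-forms and curves; once the critical value is correctly identified, both inclusions and the identification $\Ac_c^* = \Nc_c^*$ follow fairly directly from the standard theory recalled in the appendix.
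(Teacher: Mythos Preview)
Your choice of $c$ (the cohomology class of the closed $1$-form whose graph is $\Gc_{T,r}^*$) is exactly right, and you correctly invoke the standard inclusion $\Ac_c^* \subset \Gc \subset \Nc_c^*$ for an invariant $C^1$ Lagrangian graph $\Gc$ of class $c$. But from that point on you make things much harder than necessary, and the ``main obstacle'' you flag --- pinning down $\alpha(c)$ and checking calibration by hand --- is a genuine difficulty in your route that the paper simply sidesteps.

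The paper's argument for $\Gc_{T,r}^* \subset \Ac_c^*$ is two lines: every point of $\Gc_{T,r}^*$ is $T$-periodic, hence equal to its own $\omega$-limit set; but the $\omega$-limit of any point of $\Nc_c^*$ lies in $\Ac_c^*$. Since we already know $\Gc_{T,r}^* \subset \Nc_c^*$, this gives $\Gc_{T,r}^* \subset \Ac_c^*$ immediately --- no computation of $\alpha(c)$, no calibration bookkeeping, no comparison across homotopy classes. Your attempt to deduce global minimization for $L-\lambda$ from Corollary~1 (minimization for $\tilde L$ on $\R^n$) is the delicate step you worry about, and rightly so: minimizing for $\tilde L$ with fixed endpoints in $\R^n$ only compares curves in a fixed homotopy class on $\T^n$, whereas $c$-minimization requires beating competitors in all classes, where $\lambda$ contributes differently.

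For $\Ac_c^* = \Nc_c^*$, your argument (``any calibrated curve with endpoints-in-the-Aubry-set behavior must itself lie on the graph'') is vague and does not obviously close. The paper instead uses the clean criterion recalled in the appendix: once $\Ac_c^* = \Gc_{T,r}^*$ is a graph over all of $\T^n$, one automatically has $\Ac_c^* = \Nc_c^*$. So the whole proof reduces to the periodicity/$\omega$-limit trick plus two black-box facts from weak KAM theory; there is nothing to compute.
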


\medskip
\begin{proof}

The set $\Gc_{T,r}^*$ is a $C^1$ Lagrangian graph in $T^*\T^n$, so it is the graph  $\Gc_\o$ of a 
$C^1$ closed 1-form $\o$. Is it shown in \cite{Fathi} that in this case $\Ac_c^* \subset \Gc_\o 
\subset \Nc_c^*$, where $c$ is the cohomology class of $\o$. We shall prove that these three 
sets are equal. Let $(x,p) \in \Gc_\o = \Gc_{T,r}^*$. As 
$\phi_T^H(x,p) = (x,p)$,  $(x,p)$ belongs to its omega limit set; and it is known that the 
omega limit set of every element of $\Nc_c^*$ is in $\Ac_c^*$. So we have $\Ac_c^* = \Gc_\o$. 
Hence $\pi(\Ac_c^*) = \pi(\Gc_\o) = \T^n$, and this implies $\Ac_c^* = \Nc_c^*$ (see the section on weak KAM theory in the appendix).

\end{proof}

We next describe the sets $\Ac_c^*$, where $c$ is any cohomology class. 

\begin{proposition}\label{}

For all $c \in H^1(\T^n, \R)$, $\Ac_c^*$ is a graph above $\T^n$. Moreover, if $c$ 
and $d$ are two distinct elements in $H^1(\T^n, \R)$, then $\Ac_c^* \cap \Ac_d^* = \varnothing$.

\end{proposition}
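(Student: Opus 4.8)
The plan is to prove both assertions using weak KAM theory, exploiting the fact proved just above that each $\Gc^*_{T,r}$ is an Aubry set. First I would address the claim that $\Ac^*_c$ is a graph. The natural strategy is to show that $\pi : \Ac^*_c \to \T^n$ is onto, since it is already a classical fact (Mather, Fathi) that the projection of an Aubry set is injective, so that $\Ac^*_c$ is a Lipschitz graph over a closed subset of $\T^n$. Surjectivity is where the sets $\Gc^*_{T,r}$ enter: I would argue that for a suitable choice of $T$ and $r$ the set $\Gc^*_{T,r}$ is contained in $\Ac^*_c$, or rather that there is a sequence of such totally periodic tori whose projections cover $\T^n$ and that accumulate on $\Ac^*_c$; more directly, given $c$ one picks closed $1$-forms $\lambda$ of class $c$ and studies minimizers of $L-\lambda$, observing that any calibrated curve for the Mañé critical value projects into $\Ac^*_c$, and that by compactness and the no-conjugate-points hypothesis these projections must fill the torus. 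The cleanest route is probably to use the previously constructed foliation-type structure: since each point of $\T^n$ lies on some totally periodic \ec contained in some $\Gc^*_{T,r}=\Ac^*_{c'}$, one shows that as $c'$ varies these Aubry sets sweep out everything, and then a continuity/limiting argument transfers surjectivity to arbitrary $c$.

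For the disjointness statement, suppose $(x,p)\in\Ac^*_c\cap\Ac^*_d$. The key point is that the Aubry set $\Ac^*_c$ is contained in the Lagrangian graph $\Gc_{\lambda}$ for any closed $1$-form $\lambda$ representing $c$ on which the relevant weak KAM structure lives, and more importantly that the momentum $p$ at a point of $\Ac^*_c$ is determined: if $v=\Lc^{-1}(x,p)$ and $c$ is represented by $\lambda$, then along the calibrated curve through $(x,v)$ one has $\partial_v L(x,v) = \lambda_x + du_x$ for the relevant weak KAM solution, but the rotation vector / asymptotic homology class of that orbit pins down $p$ modulo the exact part. The concrete mechanism I would use is: the same point $(x,p)$ sitting in two Aubry sets would force the \ec through it to minimize simultaneously for $L-\lambda_c$ and $L-\lambda_d$, hence also for $L - \lambda_c + s(\lambda_c-\lambda_d)$ for all $s\in[0,1]$ by convexity of the action; taking $s$ extreme produces a contradiction with strict convexity of $H$ in the fibers unless $\lambda_c=\lambda_d$ at $x$, and then a connectedness/openness argument (the set of points where $\Ac^*_c$ and $\Ac^*_d$ agree is closed, and one propagates along orbits) forces $c=d$. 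Alternatively, and perhaps more in the spirit of the paper, I would invoke that $\Ac^*_c=\Nc^*_c$ is the full pseudograph and that distinct cohomology classes give graphs of closed $1$-forms in distinct cohomology classes, which cannot coincide on an open set; since both are graphs over all of $\T^n$ by the first part, coinciding at one point and being locally flow-invariant graphs would force them to coincide everywhere, contradicting $c\neq d$.

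The main obstacle I expect is establishing surjectivity of $\pi:\Ac^*_c\to\T^n$ for an \emph{arbitrary} cohomology class $c$, not just those of the form already realized by some $\Gc^*_{T,r}$. The totally periodic tori only directly give Aubry sets with rational rotation data, so one needs a genuine limiting argument — approximating $c$ by cohomology classes whose Aubry sets are these periodic tori, using the no-conjugate-points hypothesis to get uniform Lipschitz bounds (via the lemma on $R$-Lipschitz critical curves) so that the approximating graphs converge, in the $C^0$ or Hausdorff sense, to a graph contained in $\Ac^*_c$. Controlling that the limit is exactly an Aubry set, and that the convergence does not lose surjectivity, is the delicate part; the uniform a priori compactness coming from Tonelli's theorem together with the injectivity of the exponential-type map $F$ from Proposition~\ref{inj} should be exactly what makes this work.
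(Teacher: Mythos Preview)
Your approach to surjectivity has a genuine gap. You propose approximating an arbitrary $c$ by cohomology classes realized by some $\Gc^*_{T,r}$ and passing to the limit, but this runs into two obstacles: you have not established that such classes are dense in $H^1(\T^n,\R)$ (that is in fact a consequence of the later proposition on the homeomorphism $F_x$, which relies on the present one), and Aubry sets are not upper semicontinuous in the cohomology class, so a Hausdorff limit of full graphs $\Ac^*_{c_m}$ need not land inside $\Ac^*_c$. The paper avoids approximation in $c$ entirely. It fixes $c$, picks any $x\in\pi(\Ac_c)$, and uses the sequence characterization: there are closed extremal curves $\gamma_m:[0,T_m]\to\T^n$ based at $x$ whose $(L-\lambda)$-action plus $\alpha(c)T_m$ tends to zero. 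The key step---which your sketch does not isolate---is that by the remark following the construction of $\Gc_{T,r}$ (any extremal curve is either injective or periodic), each $\gamma_m$ is genuinely $T_m$-periodic, so $(x,\gamma_m'(0))\in\Gc_{T_m,r_m}$ for some $r_m$. Now for \emph{any} $y\in\T^n$ the curve $\Gamma_m$ through $(y,w_m)\in\Gc_{T_m,r_m}$ is $T_m$-periodic, homotopic to $\gamma_m$, and has the \emph{same} $L$-action (property~(4) of $\Gc_{T,r}$). Hence the sequence $(\Gamma_m)$ witnesses $y\in\pi(\Ac_c)$ directly, with no limit in $c$ required.

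For disjointness, your convexity intuition is correct but the mechanisms you propose (strict convexity of $H$ forcing $\lambda_c=\lambda_d$ at a point, or a one-point intersection of flow-invariant graphs propagating to global coincidence) are not justified as stated and are not what the paper does. The paper takes a minimizing measure supported in $\Ac^*_c\cap\Ac^*_d$ to deduce $\alpha\big(\tfrac{c+d}{2}\big)=\tfrac{1}{2}\big(\alpha(c)+\alpha(d)\big)$, then uses this affineness to show that the action-sequence characterization of $\Ac_{(c+d)/2}$ splits into those for $\Ac_c$ and $\Ac_d$ separately, yielding $\Ac^*_{(c+d)/2}\subset\Ac^*_c\cap\Ac^*_d$. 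Since all three sets are full graphs over $\T^n$ by the first part, they coincide as sets, and two closed $1$-forms with the same graph are cohomologous, so $c=d$.
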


\medskip
\begin{proof}

This result is akin to the ones contained in propositions 12 and 13 in \cite{arnaud4}, and the 
proof is roughly the same, so we will only give the main lines of the reasoning, and refer 
to \cite{arnaud4} for more details. 

Let $c \in H^1(\T^n, \R)$. We have to show that $\Ac_c$ is a graph above $\T^n$, i.e. that every $y \in \T^n$ 
is in $\pi (\Ac_c)$.  Let $\l$ be a closed 1-form with cohomology class $c$, and $(x,v) \in \Ac_c$. 
There exist a sequence of real numbers $(T_m)$ with $\lim T_m = + \infty$ and a sequence of \ecs 
$\g_m : \R \longrightarrow \T^n$ such that  
$$ \g_m(0) = \g_m(T_m) = x \ \ {\rm and} \ \ \int _0 ^{T_m} \big(L\big(\g_m(t),\g_m'(t)\big) - \l_{\g_m(t)}\big(\g_m'(t)\big) + \a(c)\big) \ dt 
\longrightarrow 0 \, ,$$
see the part about Aubry sets in the appendix. 

\noindent
According to  remark \ref{rmkinj} at the end of the last section, each $\g_m$ has to be periodic, and $T_m$ 
is a period of $\g_m$, hence $\g_m'(0)$ belongs to $\Gc_{T_m,r_m}$ for a certain $r_m \in \Z^n$. 
Denote the vector for which $(y,w_m) \in \Gc_{T_m,r_m}$ by  $w_m \in T_y\T^n$, and the associated \ec by
$\G_m : s \in \R \longmapsto \varphi_s(y,w_m) \in \T^n$. We know that each 
$\G_m$ is $T_m$-periodic, homotopic to $\g_m$, and that $A_L(\G_{n \vert [0,T_m]}) = 
A_L(\g_{n \vert [0,T_m]})$. These properties imply that  
$$ \G_m(0) = \G_m(T_m) = y \ \ {\rm and} \ \ \int _0 ^{T_m} \big(L\big(\G_m(t),\G_m'(t)\big) - \l_{\G_m(t)}\big(\G_m'(t)\big) + \a(c) \big)\ dt 
\longrightarrow 0,$$
\noindent
and therefore $y \in \pi (\Ac_c)$. 

\medskip
Now suppose that $c$ and $d$ are two cohomology classes with $\Ac_c^* \cap \Ac_d^* \neq \varnothing$. 
Then there exists an action minimizing, flow-invariant, probability measure $\mu$ on $T\T^n$, chosen in such a way that  
its dual measure $\mu^*$ has support included in $\Ac_c^* \cap \Ac_d^*$. If we express the fact 
that $\mu$ is minimal for both $L - \l$ and $L - \eta$, where $\l$ (resp. $\eta$) is a closed 1-form 
with cohomology class $c$ (resp. $d$), and use the convexity of the $\a$ function, we get 
$\a\big({c + d \over 2}\big) = {1 \over 2}\big(\a(c) + \a(d)\big)$. 
Next, let $(x,p) \in \Ac_{{c + d \over 2}}^*$, $(T_m)$ a sequence of 
real numbers with $\lim T_m = + \infty$ and $\g_m : \R \longrightarrow \T^n$ a sequence of \ecs 
with $ \g_m(0) = \g_m(T_m) = x$ and  
$$ \int _0 ^{T_m}\Big( L\big(\g_m(t),\g_m'(t)\big) - {1 \over 2}\Big(\l_{\g_m(t)}\big(\g_m'(t)\big) + \eta_{\g_m(t)}\big(\g_m'(t)\big)\Big) + \a\big({c + d \over 2}\big)\Big) \ dt 
\longrightarrow 0.$$  
\noindent
Using the fact that $\a\big({c + d \over 2}\big) = {1 \over 2}\big(\a(c) + \a(d)\big)$ , we get both limits:   
$$\int _0 ^{T_m} \big(L\big({\g_m(t)},\g_m'(t)\big) - \l_{\g_m(t)}\big(\g_m'(t)\big) + \a(c)\big) \ dt \longrightarrow 0,
$$ 
$$
\int _0 ^{T_m} \big(L({\g_m(t)},\g_m'(t)\big) - \eta_{\g_m(t)}\big(\g_m'(t)\big) + \a(d)\big) \ dt \longrightarrow 0,$$ 
\noindent
which imply that the limit of $\g_m'(0)$ is in $\Ac_c \cap \Ac_d$. This shows that the graph 
$\Ac_{{c + d \over 2}}^*$ is a subset of $\Ac_c^* \cap \Ac_d^*$, so that $\Ac_c^*  = \Ac_d^*$ and 
hence $c = d$.
\end{proof}
We come to the conclusion that the Aubry sets $\Ac_c^*$, with $c$ varying in $H^1(\T^n,\R)$, are 
a family of disjoint, flow-invariant, Lipschitz Lagrangian graphs. All we need to show now is that 
every point of the cotangent space belongs to one of these sets. This is an immediate consequence 
of the next result.

\begin{proposition}\label{}

For every $x \in \T^n$, the map  
$$ F_x : c \in H^1(\T^n, \R) \longmapsto \Ac_c^* \cap T_x^*\T^n \in T_x^*\T^n$$
\noindent
is a homeomorphism.

\end{proposition}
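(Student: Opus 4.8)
The plan is to show that $F_x$ is a continuous bijection from $H^1(\T^n,\R)\cong\R^n$ to $T_x^*\T^n\cong\R^n$ which is proper, and then invoke invariance of domain (or the fact that a proper continuous injection between manifolds of the same dimension is a homeomorphism onto its image, combined with surjectivity) to conclude. So the proof splits into three parts: injectivity, continuity (with properness), and surjectivity.

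\emph{Injectivity} is already essentially done: the previous proposition shows that for $c\neq d$ the Aubry sets $\Ac_c^*$ and $\Ac_d^*$ are disjoint graphs, so $\Ac_c^*\cap T_x^*\T^n$ and $\Ac_d^*\cap T_x^*\T^n$ are distinct points. Here I should first note that $F_x$ is well-defined: since each $\Ac_c^*$ is a graph above $\T^n$ (again by the previous proposition), the intersection $\Ac_c^*\cap T_x^*\T^n$ is exactly one point.

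\emph{Continuity and properness.} For continuity I would argue as follows. Fix $c$ and a sequence $c_m\to c$. Each point $F_x(c_m)=(x,p_m)$ lies on the Lipschitz Lagrangian graph $\Gc_{\o_m}$ where $\o_m$ is a closed $1$-form of class $c_m$; since $\Ac_{c_m}^*\subset\Gc_{\o_m}$ and these graphs have uniformly bounded slope (the calibrated/minimizing orbits for $L-\o_m$ have speeds bounded in terms of $H$ and $\alpha(c_m)$, which stay bounded as $c_m\to c$), the sequence $(p_m)$ is bounded. Extract a subsequence $p_m\to p_\infty$. The points $(x,p_m)$ lie on full orbits that are $c_m$-calibrated / in $\Nc_{c_m}^*=\Ac_{c_m}^*$; by upper semicontinuity of the Aubry set (or the Mañé set) with respect to the cohomology class — a standard fact in weak KAM theory, see \cite{Fathi} — the limit $(x,p_\infty)$ lies in $\Ac_c^*$, hence $p_\infty=F_x(c)$. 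As the limit is independent of the subsequence, $F_x(c_m)\to F_x(c)$. The same boundedness argument, run in reverse, gives properness: if $c_m\to\infty$ then $\alpha(c_m)\to\infty$ (the $\alpha$ function is superlinear), which forces the points $F_x(c_m)$ to escape to infinity in $T_x^*\T^n$ since on $\Ac_{c_m}^*$ one has $H\equiv\alpha(c_m)$, and $H$ is proper on fibers.

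\emph{Surjectivity.} Here I would combine the topological input with the disjointness. The map $F_x$ is a continuous injection $\R^n\to\R^n$, so by invariance of domain its image $U:=F_x(H^1(\T^n,\R))$ is open in $T_x^*\T^n$. Properness of $F_x$ makes it a closed map as well, so $U$ is also closed; since $T_x^*\T^n\cong\R^n$ is connected and $U$ is nonempty, $U=T_x^*\T^n$. Thus $F_x$ is a continuous proper bijection between copies of $\R^n$, hence a homeomorphism. Running this for every $x$ simultaneously, and using that $\{\Ac_c^*\}_{c\in H^1(\T^n,\R)}$ is a disjoint family of flow-invariant Lipschitz Lagrangian graphs (previous two propositions), we get that $T^*\T^n=\bigsqcup_{c}\Ac_c^*$, which is precisely the desired continuous foliation and completes the proof of the main theorem of the section.

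The main obstacle I anticipate is the continuity argument, specifically pinning down the uniform slope bound on the graphs $\Ac_{c_m}^*$ as $c_m\to c$ and invoking the correct semicontinuity statement for Aubry sets — these are folklore in weak KAM theory but need to be cited precisely from \cite{Fathi}; once continuity and properness are in hand, the surjectivity step via invariance of domain is routine.
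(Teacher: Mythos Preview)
Your proposal is correct and follows essentially the same route as the paper: injectivity from the preceding proposition, continuity via boundedness plus upper semicontinuity of the Ma\~n\'e set in the cohomology class, properness/coercivity from superlinearity of $\alpha$, and surjectivity via invariance of domain. One simplification worth noting: your boundedness step in the continuity argument can be made cleaner by observing directly that $H\big(F_x(c_m)\big)=\alpha(c_m)$, which is bounded since $\alpha$ is continuous and $c_m\to c$, so properness of $H$ on the fiber immediately bounds $(p_m)$ --- this is exactly the argument the paper uses (and the same identity drives properness), so there is no need to invoke uniform slope bounds on the graphs.
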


\medskip
\begin{proof}
The map $F_x$ is coercive. Indeed, let $K$ be a compact set in $T_x^*\T^n$ and $X = F_x^{-1}(K)$. We claim that $X$ is bounded: for all $c \in X$, 
$\a(c) = H\big(F_x(c)\big) \in H(K)$, hence $\a(X)$ is included in the compact set $H(K)$; as $\a$ 
is superlinear, this implies that $X$ is bounded.

We then establish that the map $F_x$ is  continuous. 
Let $\wtilde c\in H^1(\T^n, \R)$ be a cohomology class, and $c_m\to \wtilde c$. For all $m>0$ let $\lambda_m$ be a  $C^0$ closed $1$-form of class $c_m$  such that $\Ac_{c_m}^*$ is the graph of $\lambda_m$, and $\wtilde \lambda$ a  $C^0$ closed $1$-form of class $\wtilde c$  such that $\Ac_{\wtilde c}^*$ is the graph of $\wtilde \lambda$. We will show that the sequence $(\lambda_m)$ pointwise converges.
It is a general fact that the Ma\~n\' e sets $\Nc^*_{c}$ vary upper semi-continuously with the cohomology class (see \cite[proposition 13]{ArTi}). Let $y\in \T^n$, the sequence $(y,\lambda_{n,y})\in \Nc^*_{c_m}$ is bounded by the previous argument. But any converging subsequence must converge to an element of $\Nc^*_{\wtilde c}$. Since $\Nc^*_{\wtilde c}=\Ac^*_{\wtilde c}$ is a graph over the base, necessarily, the limit is  $(y,\wtilde\lambda_y)$. Therefore, the sequence converges to $(y,\wtilde\lambda_y)$.

 Evaluating at $x$, we have exactly shown that $F_x(c_m) \to F_x(c)$, hence that $F_x$ is continuous.

It is moreover injective  between two vector spaces of the same dimension. The 
invariance of domain (see \cite{Dold}) states that $F_x$ is an open map. As we have seen that $F_x$ is coercive,  
$F_x$ is proper. 
Thus, $F_x\big(H^1(\T^n, \R)\big)$ is  both open and closed, so it has to be equal to $T_x^*\T^n$. 
This proves that $F_x$ is surjective. Since $F_x$ is open, it is also a homeomorphism.  

\end{proof}

Another consequence of this proposition is that the map  
$$\Fc : (x,c) \in \T^n \times H^1(\T^n,\R) \longmapsto F_x(c) \in T^*\T^n$$ 
\noindent 
is continuous, and therefore the Aubry sets are the leaves of a continuous foliation of 
$T^*\T^n$.

\bigskip

 
 \section{Abundance  of KAM tori}

\subsection{Introduction and statements}\label{ss20}
In this section, we still study the dynamics of a  Tonelli Hamiltonian $H$ on $T^* \T^n = \T^n \times (\R^n)^*$ without conjugate points. We will moreover assume that $H$ is $C^\infty$
(see however Remark \ref{remdiff}). 
The associated Hamiltonian vector-field and Hamiltonian flow will still be denoted by $X_H$ and  $(\phi^H_t)_{t\in \R}$.
As it was proved in the previous section, 
for any $T>0$ and $r \in \Z^n $ there is an invariant Lagrangian graph
$$
\Gc_{T,r}^*:={\cal T}_\infty =\big\{ \big(\theta\,  ,\,  I_\infty + Du(\theta)\big) \ ; \ \theta \in \T^n \big\}
$$
such that all the points of $\Gc_{T,r}^*$ are fixed points of $\phi^H_T$, more precisely
$$
\forall x \in \R^n \, , \quad \wtilde{\phi}^H_T \big(x \, ,\,  I_\infty + Du(x)\big)=\big(x+r \, ,\,  I_\infty + Du(x)\big) \, .
$$ 
Here  $u \in  C^\infty(\T^n , \R)$ is identified with a $\Z^n$-periodic map defined on $\R^n$
 and the flow $(\wtilde{\phi}^H_t)$ on $T^*\R^n=\R^n \times (\R^n)^*$ is the lift of the flow $({\phi}^H_t)$. 
\\[2mm]
Our aim is to prove the existence of a rich family of invariant Lagrangian graphs accumulating to
${\cal T}_\infty$, on which the flow is transitive, conjugated to a linear flow of non-resonant
vector.
\\[2mm]
{\bf Definition}. A vector $\ov{\om} \in \R^n$ is said strongly Diophantine if there are real numbers 
$\gamma >0$ and $\tau $ (necessarily $\geqslant n$)
such that
\be \label{DC}
\forall k \in \Z^n \backslash \{0 \} \, , \  \forall l \in \Z \, , \quad 
|k \cdot  \ov{\om} + l| \geqslant \frac{\gamma}{|k|^\tau} \, . 
\ee
Here is the main result of this section.
\begin{theorem} \label{quasi}
Assume $\ov{\om}$ is  strongly Diophantine. There is $m_0 \in \N \backslash \{ 0 \}$ (depending on $\ov{\om}$) such that, for all
$m \geqslant m_0$, there is a $C^\infty$ Lagrangian embedding $i_m : \T^n \to \TT $ such that
\begin{itemize} 
\item[i)]
$ \  \forall \eta \in \T^n \, , \quad
\phi^H_{mT} \big(i_m(\eta)\big)=i_m(\eta + \ov{\om}) \, $. 
\item[ii)]
Writing $i_m(\eta)=\big(\psi_m (\eta), f_m (\eta)\big)$, $\psi_m$ is a $C^\infty$ diffeomorphism of $\T^n$, isotopic to $id_{\T^n}$
 and
$$
{\cal T}_m= i_m (\T^n)= \big\{ \big(\theta \,  ,\,  (f_m \circ \psi_m^{-1})(\theta)\big) \, ; \ \theta \in \T^n \  \big\} \, .
$$
is a Lagrangian graph; the sequence $({\cal T}_m)$ converges to ${\cal T}_\infty$ in $C^\infty$ topology. 
\item[iii)]
The sequence $(\psi_m)$ 
converges in $C^\infty$ topology to a diffeomorphism $\psi_\infty$ of $\T^n$   (independent of $\ov{\om}$),
isotopic to $id_{\T^n}$.
\item[iv)] The tori ${\cal T}_m$ are  flow-invariant. More precisely,
$ \ i_m^* (X_H)=\dps \frac{r}{T} + \frac{\ov{\om}}{mT} \ $, so that
$$
\forall m \geqslant m_0 \, , \ \forall t\in \R \, , \ 
\forall \eta \in \T^n \, , \quad  {\phi}^H_t \big({i}_m (\eta)\big)
={i}_m \Big(\eta + \frac{t}{T} r + \frac{t}{mT}\ov{\om}\Big).
$$
\end{itemize}
\end{theorem}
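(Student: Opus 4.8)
The plan is to apply a quantitative KAM theorem for exact symplectic maps near an invariant Lagrangian graph on which the dynamics is periodic, after putting the flow into a suitable normal form. The key observation is that, by Proposition \ref{P8} and the discussion in the previous section, the time-$T$ map $\phi^H_T$ fixes every point of the Lagrangian graph $\Gc^*_{T,r}$ pointwise, and moreover (via the Burago--Ivanov theorem applied to the relevant quotient/normal form, Proposition \ref{NF}) the flow in a neighborhood of $\Gc^*_{T,r}$ is symplectically conjugate to the geodesic flow of a \emph{flat} metric on $\T^n$, i.e.\ to an integrable system $h(I)$ with $h$ a positive definite quadratic form, written in action-angle coordinates $(\theta,I)\in\T^n\times B$, with $\Gc^*_{T,r}=\{I=I_\infty\}$ and $\partial_I h(I_\infty)=r/T$. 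So first I would invoke Proposition \ref{NF} to reduce to this integrable local model, carefully tracking the smooth dependence and exactness of the symplectic change of coordinates.

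Next I would study the time-$mT$ map in these coordinates. In the integrable model the time-$mT$ flow sends $(\theta,I)\mapsto(\theta+mT\,\partial_I h(I),I)$; at $I=I_\infty$ this is $(\theta+mr,I_\infty)=(\theta,I_\infty)$ since $r\in\Z^n$. The idea is to look for an invariant torus with rotation vector $\ov\om$: I want a torus $\{I=I_\infty+O(1/m)\}$ (up to a symplectic correction) such that $\partial_I h$ takes the value $(r + \ov\om/m)/(mT)\cdot mT=\dots$; concretely, set $I_m$ so that $mT\,\partial_I h(I_m)=mr+\ov\om \pmod{\Z^n}$, which since $h$ is quadratic (so $\partial_I h$ affine and invertible) gives $I_m=I_\infty+\frac{1}{mT}(\partial^2 h)^{-1}\ov\om$, hence $I_m\to I_\infty$. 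The frequency of the time-$mT$ map on the unperturbed torus $\{I=I_m\}$ is then exactly $\ov\om$, which is Diophantine. Now the genuine Hamiltonian differs from the flat model by the error term produced in Proposition \ref{NF}; one writes $\phi^H_{mT}$ as a perturbation of the twist map $(\theta,I)\mapsto(\theta+mT\partial_I h(I),I)$ and applies a KAM theorem (in the form for exact symplectic twist maps, e.g.\ the version with parameters that produces a whole Cantor family, here specialized to the single frequency $\ov\om$) to obtain, for $m$ large enough, a $C^\infty$ Lagrangian embedding $i_m$ with $\phi^H_{mT}\circ i_m=i_m\circ R_{\ov\om}$. The smallness condition needed for KAM is met because the twist is of order $mT$ while the perturbation is controlled uniformly, so the relevant ratio is small once $m\geqslant m_0(\ov\om)$; this is where the threshold $m_0$ and the dependence on $\gamma,\tau$ enter.

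Then I would extract the remaining items. Flow-invariance (iv): $i_m(\T^n)$ is invariant under $\phi^H_{mT}$ with an irrational rotation, hence its closure under the full flow is invariant and, by uniqueness of the Lagrangian graph through the rotation-vector data together with the explicit computation of $i_m^*X_H$ in the normal-form coordinates, one gets $i_m^*X_H=\frac{r}{T}+\frac{\ov\om}{mT}$ exactly, giving the displayed flow formula. The graph property and the form $\psi_m,f_m$ in (ii): since $i_m$ is $C^\infty$-close to the inclusion of $\Gc^*_{T,r}$ (a graph), $\psi_m=\pi\circ i_m$ is a $C^\infty$ diffeomorphism isotopic to the identity, so $\Tc_m$ is a graph; convergence $\Tc_m\to\Tc_\infty$ in $C^\infty$ follows from KAM estimates as $I_m\to I_\infty$. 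For (iii), the limit $\psi_\infty$ is the angle-component of the normal-form conjugacy restricted to $\Gc^*_{T,r}$, which depends only on $H,T,r$ and not on $\ov\om$; I would check that the KAM conjugacy contributes a correction vanishing as $m\to\infty$, so $\psi_m\to\psi_\infty$ in $C^\infty$.

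The main obstacle I anticipate is the normal form step, Proposition \ref{NF}: turning ``$\phi^H_T=\mathrm{Id}$ on the Lagrangian graph $\Gc^*_{T,r}$'' into a genuine \emph{symplectic, smooth, exact} conjugacy with a flat-metric model on a full neighborhood — not just along the torus — requires the Burago--Ivanov rigidity plus careful control of the symplectic structure and of the error term's size relative to the twist; getting that error uniform in $m$ (it is essentially independent of $m$, being a fixed neighborhood datum) is exactly what makes the KAM smallness condition work, so the bookkeeping there is the crux. A secondary difficulty is ensuring the KAM theorem one cites applies to a twist that grows with $m$ rather than a perturbation that shrinks — this is handled by rescaling time by $mT$ and observing the perturbation size stays bounded — but the clean statement of the rescaled hypotheses should be spelled out.
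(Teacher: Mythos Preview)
Your plan is correct and follows the paper's route: Proposition~\ref{NF} for the normal form, Proposition~\ref{KAM} for the KAM step, then $i_m=G\circ j_m$. Two execution details differ from your description and are worth noting.

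First, the paper achieves KAM smallness not via a twist growing like $mT$ against a bounded perturbation, nor by rescaling time. It rescales the \emph{action} variable via ${\cal R}_\ep(\theta,I)=(\theta,\ep I)$ with $\ep=1/m$, and an Euler-scheme estimate (Lemma~\ref{euler}) shows that the $m$-fold iterate ${\cal U}_m=({\cal R}_{1/m}^{-1}\circ\phi^{H_1}_T\circ{\cal R}_{1/m})^m$ equals $(\theta+\ov{A}I+O(1/m),\,I+O(1/m))$: fixed twist $\ov{A}$, perturbation $O(1/m)$, to which Moser's theorem applies directly. Relatedly, Proposition~\ref{NF} is only an \emph{approximate} normal form with $O(I^2)$, $O(I^3)$ remainders, not an exact conjugacy to a flat-metric flow; Burago--Ivanov is used inside its proof solely to make the linear coefficient $\ov{A}$ constant in $\theta$.

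Second, for (iv) the paper does not appeal to any ``uniqueness of the Lagrangian graph through rotation-vector data.'' Instead: since $j_m$ intertwines $\phi^{H_1}_{mT}$ with the irrational rotation $\tau_{\ov{\om}}$, density forces $H_1\circ j_m$ to be constant; a Lagrangian torus inside an energy level is then automatically flow-invariant. The induced flow $j_m^{-1}\circ\phi_t^{H_1}\circ j_m$ on $\T^n$ commutes with $\tau_{\ov{\om}}$, hence is a translation $\eta\mapsto\eta+t\ov{\beta}_m$, and the exact value $\ov{\beta}_m=\frac{r}{T}+\frac{\ov{\om}}{mT}$ is pinned down by a mean-value comparison with the normal-form expansion of $\phi_T^{H_1}\circ j_m$.
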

\noindent
\begin{remark} \rm
a) The torus ${\cal T}_m$ is the Aubry set ${\cal A}^*_{c_m}$, where $c_m$ is the cohomology
class of the closed $1$-form $f_m \circ \psi_m^{-1}$ \\[2mm]
b) As a by-result, the diffeomorphism  $\psi_\infty$  conjugates the  flow on the completely periodic torus ${\cal T}_\infty$
to the periodic linear flow on $\T^n$ of vector $\dps \frac{r}{T}$. 
\end{remark}
The proof of Theorem \ref{quasi} relies on the following two propositions. The first one provides a 
normal form for $\phi^H_T$ in the neighborhood of ${\cal T}_\infty$ and uses the theorem of
Burago and Ivanov in  \cite {BI}. The second one is a simple application of a KAM theorem for exact symplectic maps. 
\begin{remark} \label{remdiff} \rm
Using a KAM theorem in finite differentiability ( \cite{poschel} , \cite{douady2} , \cite{salamon}), we could prove our result for $H$ of class $C^k$, $k$ large enough. 
More precisely a  survey of the proof would show that, $p \geqslant 1$ being given, if $\overline{\omega}$ satisfies
$(\ref{DC})$ and $k> 2\tau + 7 +p$, then the conclusion of Theorem \ref{quasi} holds with $j_m$ of class $C^p$. 
\end{remark}
{\it Notation.} In what follows, any remainder denoted 
by $O(I^s)$ is a $C^\infty$ map $u$ from  $T^* \T^n$ to some finite dimensional vector-space, 
such that, for any $\alpha \in \N^n$, $\beta \in \N^n$ such that $|\beta| <s$, 
$\ |\partial_\theta^\alpha \partial_I^\beta  u|\leqslant C_{\alpha , \beta} |I|^{s-|\beta|}$ on 
some neighborhood $W$ of $0_{\T^n}=\T^n \times \{0\}$.\\
Moreover, if $B$ is an invertible  operator, the inverse of its transposed operator is denoted by $B^{-T}$.
\begin{proposition} \label{NF}
Under the assumptions of Theorem \ref{quasi} , 
there is a $C^\infty$ symplectic diffeomorphism $G$ of $\TT$ of the form
$$
G(\theta , I)= \big(\psi (\theta)\; ,\;  I_\infty + Du\big(\psi(\theta)\big) + D\psi(\theta)^{-T} I\big) \, ,
$$
where $\psi$ is a diffeomorphism of $\T^n$ isotopic to $id_{\T^n}$,
such that 
${\cal T}_\infty = G(\T^n \times \{ 0 \})$, and
\be \label{normal}
G^{-1} \circ \phi^H_T \circ G (\theta , I)= \big(\theta + \ov{A} I + O(I^2)\;  ,\;   I + O(I^3)\big) \, ,
\ee
where $\ov{A} \in L\big((\R^n)^*, \R^n\big)$ is symmetric positive definite. 
\end{proposition}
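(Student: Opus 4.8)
The plan is to obtain $G$ as a composition of three explicit symplectic diffeomorphisms, the only non‑elementary ingredient being the rigidity theorem of Burago and Ivanov \cite{BI}.

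First I would move ${\cal T}_\infty=\Gc^*_{T,r}$ to the zero section. By Proposition \ref{P8}, ${\cal T}_\infty$ is a $C^\infty$ Lagrangian graph, hence the graph of a closed $1$-form which on $\R^n$ reads $\theta\mapsto I_\infty+Du(\theta)$; then $G_0(\theta,I)=\big(\theta\,,\,I_\infty+Du(\theta)+I\big)$ is a symplectic diffeomorphism of $\TT$ carrying $\T^n\times\{0\}$ onto ${\cal T}_\infty$. Since $\phi^H_T$ is the identity on $\Gc^*_{T,r}$, the conjugate $\Phi_0:=G_0^{-1}\circ\phi^H_T\circ G_0$ is symplectic and fixes $\T^n\times\{0\}$ pointwise; near the zero section it is therefore $C^\infty$-close to the identity and admits a generating function of mixed type $S(\theta,I')=\theta\cdot I'+\tfrac12\,I'\cdot M(\theta)\,I'+O(I'^3)$. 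Being the quadratic part of a generating function, $M(\theta)$ is automatically a $\Z^n$-periodic $C^\infty$ field of symmetric operators, and the absence of a term constant or linear in $I'$ is forced by the fact that the whole zero section consists of fixed points. Reading off $I=\partial_\theta S$ and $\Theta=\partial_{I'}S$ one gets $I'=I+O(I^2)$ and $\Theta=\theta+M(\theta)\,I+O(I^2)$.

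Next I would identify and normalize $M$. One checks that $M(\theta)^{-1}=-\,\partial^2_{xy}\Ac_T(\theta,\theta+r)$, where $\Ac_T$ is the action generating $\phi^H_T$ (this uses that $\Ac_T(x,x+r)$ is independent of $x$, as proved in the first section, together with the fact that $\Ac_T$ is a mixed generating function for $\phi^H_T$). Because $H$ has no conjugate points, this mixed Hessian is invertible (cf.\ Corollary \ref{diffeo}), and using in addition the strict convexity of $L$ and the fact that extremal curves without conjugate points are strict local minima of the action (Corollary 4.1 of \cite{Cont}) one gets that it is positive definite; hence $\mathbf{g}:=M^{-1}$ is a $C^\infty$ Riemannian metric on $\T^n$. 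The key point — and the heart of the proof — is that $\mathbf{g}$ has no conjugate points: its Jacobi fields over a time interval of length $T$ are governed precisely by the operators $M(\theta)$, i.e.\ by the linearization of $\phi^H_T$ transverse to $\Gc^*_{T,r}$, so a conjugate pair for $\mathbf{g}$ would produce, through $D\phi^H_{nT}$ for a suitable integer $n$, a vertical vector mapped to a vertical vector, contradicting the hypothesis on $H$. Burago and Ivanov's theorem then forces $\mathbf{g}$ to be flat, so some diffeomorphism of $\T^n$ pulls it back to a constant metric; post-composing with a suitable element of $GL_n(\Z)$ acting linearly on $\T^n$, we may take this diffeomorphism $\psi$ isotopic to $id_{\T^n}$ while keeping $\psi^*\mathbf{g}$ constant, equal to $\bar A^{-1}$ with $\bar A\in L\big((\R^n)^*,\R^n\big)$ symmetric positive definite.

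Finally, let $\Phi_\psi(\theta,I)=\big(\psi(\theta)\,,\,D\psi(\theta)^{-T}I\big)$ be the cotangent lift of $\psi$ and set $G:=G_0\circ\Phi_\psi$. A direct computation gives $G$ the form announced in the statement, $G$ is symplectic, and $G(\T^n\times\{0\})={\cal T}_\infty$. The map $\Phi:=G^{-1}\circ\phi^H_T\circ G=\Phi_\psi^{-1}\circ\Phi_0\circ\Phi_\psi$ is symplectic, fixes $\T^n\times\{0\}$ pointwise, and its linearized twist at $\eta$ is $D\psi(\eta)^{-1}M\big(\psi(\eta)\big)D\psi(\eta)^{-T}=\big((\psi^*\mathbf{g})(\eta)\big)^{-1}=\bar A$, a constant. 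Hence the generating function of $\Phi$ is $S(\theta,I')=\theta\cdot I'+\tfrac12\,I'\cdot\bar A\,I'+S_{\geqslant3}(\theta,I')$ with $\theta$-independent quadratic part, and since $\partial_\theta S_{\geqslant3}=O(I'^3)$, the relation $I=\partial_\theta S$ now forces $I'=I+O(I^3)$, whence $\Theta=\partial_{I'}S=\theta+\bar A\,I+O(I^2)$, which is exactly $(\ref{normal})$. I expect the genuine obstacle to lie in the third paragraph: making rigorous the identification of $\mathbf{g}$ and, above all, the claim that $\mathbf{g}$ has no conjugate points — this is the only place where the hypothesis on $H$ is really used — so that Burago--Ivanov applies; the adjustment ensuring that the flattening diffeomorphism is isotopic to the identity is comparatively routine, as is the bookkeeping with generating functions in the last step.
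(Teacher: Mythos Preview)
Your architecture matches the paper's exactly: translate ${\cal T}_\infty$ to the zero section by $G_0$, read off a field of symmetric matrices $M(\theta)=A(\theta)$ from the linearization of the conjugated time-$T$ map, interpret $A(\theta)^{-1}$ as a Riemannian metric $g$ on $\T^n$, show $g$ has no conjugate points, invoke Burago--Ivanov to flatten $g$ by a diffeomorphism $\psi$ isotopic to $id_{\T^n}$, and finish with the cotangent lift $G_1=\Phi_\psi$. Your generating-function bookkeeping is a legitimate alternative to the paper's argument (which instead differentiates the one-parameter family $\mathcal R_\ep^{-1}\circ\phi^{H_0}_T\circ\mathcal R_\ep$ to see that the $O(I^2)$ part of the second component is exactly $-\tfrac12\partial_\theta\langle A(\theta)I,I\rangle$); both routes yield the $O(I^3)$ once $A$ is made constant.

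The genuine gap is precisely where you suspect it. Your sentence ``its Jacobi fields over a time interval of length $T$ are governed precisely by the operators $M(\theta)$'' does not hold, and the mechanism you propose---producing a vertical--to--vertical vector for $D\phi^H_{nT}$---cannot work at the zero section: since $\phi^{H_0}_T$ fixes $\T^n\times\{0\}$ pointwise, $D\phi^{H_0}_{nT}(\theta,0)[0,\Delta I]=(nA(\theta)\Delta I,\Delta I)$, which is never vertical for $\Delta I\neq0$. The metric $g$ lives on the base; its conjugate points concern its own geodesic flow at \emph{nonzero} momenta and over \emph{arbitrary} times, and there is no first-order relation between this flow and $\phi^{H}_T$.

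What the paper does instead is establish a \emph{nonlinear} link: the geodesic Hamiltonian flow $\phi^F_t$ of $g$ (with $F(\theta,I)=\tfrac12\langle A(\theta)I,I\rangle$) is the scaling limit of $\phi^{H_0}_T$, in the sense that $\mathcal R_\ep^{-1}\circ\phi^{H_0}_T\circ\mathcal R_\ep=id+\ep X_F+O(\ep^2)$. An Euler-method estimate (Lemma~\ref{euler}) then gives $|(\mathcal R_{1/m}^{-1}\circ\phi^{H_0}_T\circ\mathcal R_{1/m})^m-\wtilde\phi^F_S|_{C^1}\to0$ on compacta when $m\ep\to S$. If $g$ had conjugate points one could find two \emph{non-degenerate} geodesics from $x$ to $y$ in time $S$; by the implicit function theorem these persist for the iterate, yielding $\bar I_1\neq\bar I_2$ with $\pi\circ\wtilde\phi^{H_0}_{pT}(x,\ep_p\bar I_1)=\pi\circ\wtilde\phi^{H_0}_{pT}(x,\ep_p\bar I_2)$, contradicting Corollary~\ref{diffeo}. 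This scaling/approximation step is the missing idea in your sketch; the rest of your proposal is essentially correct.
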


\begin{proposition} \label{KAM}
Assume that  $H : T^* \T^n \to \R$ is $C^\infty$ and such that, on a neighborhood of $0_{\T^n}$, 
\be \label{p2nf}
 \phi^H_T  (\theta , I)= \big(\theta + \ov{A} I + O(I^2)\; ,\;  I + O(I^3)\big) \, ,
\ee
where $\ov{A} \in L\big((\R^n)^*, \R^n\big)$ is symmetric non-degenerate.
Let $\ov{\om} \in \R^n \backslash \{ 0 \}$ be strongly Diophantine. 
Then there is $m_1 \in \N^*$ such that, for any $m\geqslant m_1$, there
is a $C^\infty$ Lagrangian embedding $j_m : \T^n \to T^* \T^n$ of the form
$$
j_m(\eta)=\Big(\eta + u_m (\eta) \; ,\;  \ov{A}^{-1} \Big(\frac{\ov{\om}}{m}\Big) + v_m(\eta)\Big) \, ,
$$
with $u_m \in C^\infty (\T^n , \R^n)$, $v_m \in C^\infty \big(\T^n , (\R^n)^*\big)$,  for any $k$:
 $||u_m||_{C^k(\T^n)}=o(1)$,
$||v_m||_{C^k(\T^n)}=o(1/m)$ as $m \to \infty$  , such that
$$
\forall \eta \in \T^n \, , \quad      \phi_{mT}^H \big(j_m (\eta)\big) = j_m (\eta + \ov{\om}) \, .
$$
\end{proposition}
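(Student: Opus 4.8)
The plan is to apply a classical KAM theorem for exact symplectic maps, not to $\phi^H_T$ directly but to $\phi^H_{mT}$ after a dilation of the action variable which exhibits it as an $O(1/m)$-perturbation of a fixed non-degenerate integrable twist map, whose invariant torus carrying the rotation $\ov{\om}$ sits exactly at $I=\ov{A}^{-1}(\ov{\om}/m)$. Concretely, write $f:=\phi^H_T$, which on a neighbourhood $W$ of $0_{\T^n}$ equals $(\theta,I)\mapsto\big(\theta+\ov{A}I+a(\theta,I),\,I+b(\theta,I)\big)$ with $a=O(I^2)$, $b=O(I^3)$. Fix once and for all a ball $B\subset(\R^n)^*$ containing $\rho_*:=\ov{A}^{-1}\ov{\om}$, and for $m$ large set $\Phi_m(\theta,\rho)=(\theta,\rho/m)$ and $\hat f_m:=\Phi_m^{-1}\circ f\circ\Phi_m$; since $\Phi_m(\T^n\times B)\subset W$ for $m$ large, $\hat f_m$ is well defined on $\T^n\times B$, with
$$\hat f_m(\theta,\rho)=\Big(\theta+\tfrac1m\ov{A}\rho+a(\theta,\rho/m),\;\rho+m\,b(\theta,\rho/m)\Big).$$
Because each $\rho$-derivative of $a(\cdot,\cdot/m)$ or $b(\cdot,\cdot/m)$ gains a factor $1/m$ while $a=O(I^2)$, $b=O(I^3)$, one checks $\|a(\cdot,\cdot/m)\|_{C^k(\T^n\times B)}=O(1/m^2)$ and $\|m\,b(\cdot,\cdot/m)\|_{C^k(\T^n\times B)}=O(1/m^2)$ for every $k$; hence $\hat f_m=g_m+e_m$, where $g_m(\theta,\rho)=(\theta+\tfrac1m\ov{A}\rho,\rho)$ is affine — it is the time-$\tfrac1m$ map of the quadratic Hamiltonian $\tfrac12\langle\ov{A}\rho,\rho\rangle$ — and $\|e_m\|_{C^k}=O(1/m^2)$.

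Next I would iterate $m$ times. Since $g_m^{\,m}(\theta,\rho)=(\theta+\ov{A}\rho,\rho)=:T_{\ov{A}}$, and since for $m$ large all iterates $\hat f_m^{\,j}$ with $1\leqslant j\leqslant m$ are defined on a slightly smaller ball $B'\Subset B$ still containing $\rho_*$, with $C^{k+1}$-norms bounded uniformly in $m$ (the $\rho$-component drifts by only $O(1/m)$ over $m$ steps, and $g_m$ being affine keeps higher derivatives small), the standard estimate for the propagation of a perturbation under composition yields, for every $k$,
$$\Phi_m^{-1}\circ\phi^H_{mT}\circ\Phi_m=\hat f_m^{\,m}=T_{\ov{A}}+O(1/m)\qquad\text{in }C^k(\T^n\times B').$$
I expect this to be the main obstacle — less the KAM machinery than the bookkeeping: one must track carefully how the $O(I^2)$ and $O(I^3)$ errors transform under the dilation and then accumulate only linearly under the $m$-fold composition. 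It is precisely here that the improved order $b=O(I^3)$ in \eqref{p2nf} is indispensable: with merely $b=O(I^2)$ the action component of the error would accumulate to size $O(1)$, and the scheme would break down.

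Finally I would invoke a KAM theorem for $C^\infty$ exact symplectic maps near a non-degenerate integrable twist map (see, e.g., \cite{poschel} and \cite{salamon}). The map $\hat f_m^{\,m}=\Phi_m^{-1}\circ\phi^H_{mT}\circ\Phi_m$ is exact symplectic (the conjugate of the time-$mT$ map of a Hamiltonian flow by the conformally symplectic dilation $\Phi_m$); moreover $\ov{\om}$ satisfies \eqref{DC}, $\ov{A}$ is non-degenerate, and $T_{\ov{A}}$ has the invariant torus $\{\rho=\rho_*\}$ on which it acts as $\eta\mapsto\eta+\ov{\om}$. Hence there is $m_1\in\N\backslash\{0\}$ such that for every $m\geqslant m_1$, $\hat f_m^{\,m}$ has a $C^\infty$ invariant Lagrangian torus, the image of an embedding $\eta\mapsto\big(\eta+\hat u_m(\eta),\,\rho_*+\hat v_m(\eta)\big)$ on which $\hat f_m^{\,m}$ is conjugate to $\eta\mapsto\eta+\ov{\om}$, with $\|\hat u_m\|_{C^k}+\|\hat v_m\|_{C^k}=O\big(\|\hat f_m^{\,m}-T_{\ov{A}}\|_{C^{N(k)}}\big)=O(1/m)$ for every $k$. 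Pulling this torus back by $\Phi_m$ and setting
$$j_m(\eta)=\Big(\eta+\hat u_m(\eta),\;\ov{A}^{-1}\big(\tfrac{\ov{\om}}{m}\big)+\tfrac1m\hat v_m(\eta)\Big),$$
that is $u_m:=\hat u_m=o(1)$ and $v_m:=\tfrac1m\hat v_m=o(1/m)$ in every $C^k$, the conjugacy for $\hat f_m^{\,m}$ becomes $\phi^H_{mT}\big(j_m(\eta)\big)=j_m(\eta+\ov{\om})$; and $j_m$ is a Lagrangian embedding since $\Phi_m$, being conformally symplectic, sends Lagrangian submanifolds to Lagrangian submanifolds. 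This is the required statement.
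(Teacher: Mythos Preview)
Your proof is correct and follows essentially the same strategy as the paper: rescale the action by $1/m$ so that $\phi^H_T$ becomes an $O(1/m^2)$ perturbation of the affine twist $g_m$, iterate $m$ times to obtain an $O(1/m)$ perturbation of the fixed non-degenerate twist $T_{\ov A}(\theta,\rho)=(\theta+\ov A\rho,\rho)$, invoke a Moser-type KAM theorem for exact symplectic maps at the strongly Diophantine frequency $\ov\om$, and then undo the rescaling. The only real difference is in how the $m$-fold iteration is controlled: the paper packages this step as a separate Euler-method lemma (Lemma~\ref{euler}), comparing $(\Phi_{1/m})^m$ to the time-$1$ flow of the Hamiltonian vector field $X_F$ with $F(\theta,I)=\tfrac12\langle\ov A I,I\rangle$, whereas you compare $\hat f_m^{\,m}$ directly to the explicit iterate $g_m^{\,m}=T_{\ov A}$; both yield the same $C^k$ estimate $\hat f_m^{\,m}=T_{\ov A}+O(1/m)$, and both rely on the fact that the derivative of the unperturbed step has norm $1+O(1/m)$ so that products of $m$ of them stay bounded. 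Your remark that the hypothesis $b=O(I^3)$ (rather than merely $O(I^2)$) is exactly what prevents the action error from accumulating to $O(1)$ is also the point underlying the paper's formulation of \eqref{p2nf}.
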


\subsection{KAM meets weak KAM: proof of Theorem \ref{quasi}}

In this subsection we prove Theorem \ref{quasi} from Propositions \ref{NF} and \ref{KAM}.
\\[2mm]
Consider $H_1=H\circ G$, where $G$ is the symplectic map of Proposition 
\ref{NF}. We have $\phi_t^{H_1} =G^{-1}
\circ \phi_t^H \circ G$, hence the Hamiltonian $H_1$ satisfies the assumption of 
Proposition \ref{KAM}.  Consider the  sequence $(j_m)_{m \geqslant m_1}$
of Lagrangian embeddings of $\T^n$ into $\TT$ provided by Proposition \ref{KAM}, with
$$
j_m (\eta)= \Big( \eta + u_m (\eta)\; ,\;  \ov{A}^{-1} \Big(\dps \frac{\ov{\om}}{m}\Big) + v_m(\eta)\Big) \, .
$$
 Since
$ j_m(\eta + k \ov{\om})= \phi_{kmT}^{H_1} \big(j_m (\eta)\big)$, we have
\be \label{dens}
\forall \eta \in \T^n \, , \ \forall k \in \Z \, , \quad
H_1\big(j_m(\eta + k \ov{\om})\big)=H_1\big(j_m(\eta)\big) \, .
\ee
Since $\ov{\om}$ is strongly non-resonant, the orbits of the translation  $\tau_{\ov{\om}}$ are dense in
$\T^n$, hence by $(\ref{dens})$,  $H_1 \circ j_m$ is constant. Since $j_m (\T^n)$ 
is Lagrangian, this implies that 
$$\forall t\in \R, \quad   \phi_t^{H_1} \big(j_m (\T^n)\big) = j_m (\T^n),$$
and we can define on $\T^n$ a smooth flow $(\alpha_t^{H_1})=(j_m^{-1} \circ 
\phi_t^{H_1} \circ j_m)$. Now for any $t\in \R$, $\alpha_t^{H_1}$ commutes 
with $\tau_{\ov{\om}}$, hence ($\ov{\om}$ being non-resonant) $\alpha_t^{H_1}$
itself is a translation of vector $\beta_m (t)$, therefore $ j_m^* (X_{H_1}) = \ov{\beta_m}:=\beta'_m(0)$
is constant on $\T^n$ and $\beta_m(t)=t\bar\beta_m$. We are going to prove that 
\be \label{beta}
\ov{\beta}_m=\frac{r}{T} + \frac{\ov{\om}}{Tm},
\ee
for $m$ large enough. 
Since $\alpha_{Tm}^{H_1} = \tau_{\ov{\om}}$, there is $k_m \in \Z^n$ such that 
\be \label{beta1} Tm \ov{\beta}_m = \ov{\om} + k_m \, . \ee 
Writing $j_\infty (\eta)=(\eta ,0)$ and $b_\infty (\eta)= (j_\infty^* X_{H_1}) (\eta)$, 
since  $||j_m -j_\infty||_{C^k(\T^n)}=o(1)$ for any $k\in \N$,  
we have $||b_\infty-\ov{\beta}_m||_{C^k(\T^n)}=o(1)$ for any $k \in \N$. 
Hence $b_\infty$ is a constant map, equal to $\displaystyle{\ov{\beta}_\infty:= \lim_{m \to \infty} \ov{\beta}_m}$.
Moreover, $\wtilde{\phi}^{H_1}_T (x,0)=(x+r,0)$ (we use here the fact that $\psi$ is isotopic to
$id_{\T^n}$), hence $\ov{\beta}_\infty= \dps \frac{r}{T}$. Thus we obtain
\be \label{beta2}
\ov{\beta}_m=\frac{r}{T} + o(1) \, . 
\ee
We have
\be \label{beta3}
j_m (\eta + T \ov{\beta}_m )=\Big(\eta + T \ov{\beta}_m + u_m (\eta + T \ov{\beta}_m) \; ,\;
\ov{A}^{-1} \Big( \frac{\ov{\om}}{m}  \Big) + o(1/m)\Big) .  
\ee
On the other hand, 
\begin{align}  \label{beta4}
j_m (\eta + T \ov{\beta}_m )=&\phi_T^{H_1} \big(j_m(\eta)\big ) \nonumber \\
=& \phi_T^{H_1} \Big(\eta + u_m (\eta)\;  ,\;  
\ov{A}^{-1} \Big(\dps \frac{\ov{\om}}{m}\Big) + v_m(\eta)\Big) \nonumber \\
=& \Big  (\eta + u_m (\eta)+ \dps \frac{\ov{\om}}{m} + o(1/m)\;  ,\;  
\ov{A}^{-1} \Big(\dps \frac{\ov{\om}}{m}\Big) + o(1/m)\Big) .
\end{align}
Comparing $(\ref{beta3})$ and $(\ref{beta4})$, we derive that there is $l_m \in \Z^n$
such that 
\be \label{modif}
\forall \eta \in \T^n \, , \ T \ov{\beta}_m + u_m (\eta + T \ov{\beta}_m)=
u_m (\eta)+ \dps \frac{\ov{\om}}{m} + l_m + o(1/m)
\ee
Taking the mean-value in $(\ref{modif})$, we obtain
\be  \label{beta5}
T \ov{\beta}_m = \frac{\ov{\om}}{m} + l_m + o(1/m) \, .
\ee
By $(\ref{beta5})$ and $(\ref{beta2})$, $l_m= r + o(1)$, which implies that for $m$
large enough, $l_m=r$. Then $(\ref{beta5})$ gives $Tm \ov{\beta}_m =\ov{\om} + mr + o(1)$,
so that in $(\ref{beta1})$, $k_m=mr+ o(1)$. Hence there is $m_0 \geqslant m_1$ such that for $m\geqslant m_0$, $k_m=mr$ and
$(\ref{beta})$ is satisfied. 

There remains to define $i_m : \T^n \to T^* \T^n$ as
$i_m= G \circ j_m =(\psi_m\, ,\,  f_m)$, with $\psi_m (\eta)= \psi \big(\eta + u_m(\eta)\big)$; the sequence $(\psi_m)$ clearly converges in
$C^\infty$-topology to $\psi_\infty:=\psi$; $j_m(\T^n)$ being a Lagrangian manifold, so is
$i_m(\T^n)=G\big(j_m(\T^n)\big)$, and $i_m(\T^n)$ is a graph because $\psi_m$ is a diffeomorphism
of $\T^n$; i) and iv) of Theorem \ref{quasi} are just a consequence of
$\phi_t^H \circ G = G \circ \phi_t^{H_1}$.

\subsection{The normal form: Proof of Proposition \ref{NF}}


We first introduce a lemma that will be used
in the proof of Propositions \ref{NF} and \ref{KAM}. In what follows, 
denoting by $pr$ the canonical projection from $T^* \R^n$ to
$T^* \T^n$,
we shall identify a map $u$ defined in $T^* \T^n$ with the $\Z^n$-periodic map
$u\circ pr$ defined in $T^* \R^n$.

\begin{lemma}  \label{euler}
Let $K$ be a compact subset of $T^* \T^n$, $U$ be an open bounded neighborhood of $K$ in
$T^* \T^n$, and define $\wtilde{K}=pr^{-1} (K)$, $\wtilde{U}=pr^{-1} (U)$.   Let $X$ be a $C^{k+1}$ vector field on $T^* \T^n$,
with $k \geqslant 1$.
We shall identify $X$ with the vector field $X \circ pr$ on $T^* \R^n$. We assume that $\wtilde{K}$ is preserved 
by the flow $(\wtilde{\varphi}_t)$ of $X$. For any $c_0>0$, there exist $\ep_0>0$ and $c_1>0$ such that
for any $\ep \in (0, \ep_0)$ and for any $u \in C^k \big(T^* \T^n , \R^n \times (\R^n)^*\big) $ such
that $|u-\ep X|_{C^k(U)} \leqslant c_0 \ep^2$, for all $m \in \N$ such that $m\ep \leqslant c_0$,
$$
|(id+u)^m - \wtilde{\varphi}_{m\ep}|_{C^k(\wtilde{K})} \leqslant c_1  \ep \, .
$$

\end{lemma}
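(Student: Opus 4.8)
The plan is to iterate a local estimate comparing one step of the map $\mathrm{id}+u$ with the time-$\epsilon$ flow $\wtilde\varphi_\epsilon$, and to control the propagation of the error along the orbit using the fact that the flow preserves (a neighborhood of) $\wtilde K$. First I would fix $c_0>0$. Since $X$ is $C^{k+1}$ and the flow $(\wtilde\varphi_t)$ preserves $\wtilde K$, there is a slightly larger compact set $\wtilde K'$ with $\wtilde K\subset \mathrm{int}\,\wtilde K'\subset \wtilde K'\subset\wtilde U$ which is invariant under $\wtilde\varphi_t$ for all $|t|\le c_0$ up to staying in $\wtilde U$; more carefully, by compactness one can choose $\epsilon_0$ small and a compact $\wtilde K'\subset\wtilde U$ so that every orbit piece $\{\wtilde\varphi_t(z):0\le t\le c_0\}$ with $z\in\wtilde K$ stays in $\mathrm{int}\,\wtilde K'$, and also so that the iterates $(\mathrm{id}+u)^j(z)$ for $j\epsilon\le c_0$ remain in $\wtilde K'$ once we have the inductive bound (this last point is part of the induction, not a hypothesis). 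Denote by $L$ a common bound, on $\wtilde K'$ (or on $\wtilde U$), for the $C^k$-norms of $X$ and its flow maps $\wtilde\varphi_t$, $|t|\le c_0$; in particular the maps $\wtilde\varphi_t$ and $\mathrm{id}+u$ are Lipschitz in the $C^k$-sense with a constant we may call $\Lambda\ge 1$, uniform in $t$ and in $u$ with $|u-\epsilon X|_{C^k(U)}\le c_0\epsilon^2$.

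The key local estimate is a one-step comparison: on $\wtilde K'$,
$$
|(\mathrm{id}+u)-\wtilde\varphi_\epsilon|_{C^k}\le |\mathrm{id}+\epsilon X-\wtilde\varphi_\epsilon|_{C^k}+|u-\epsilon X|_{C^k}\le C_2\epsilon^2+c_0\epsilon^2=:c_3\epsilon^2,
$$
where the bound $|\mathrm{id}+\epsilon X-\wtilde\varphi_\epsilon|_{C^k(\wtilde K')}\le C_2\epsilon^2$ comes from Taylor-expanding the flow in time ($\frac{d}{dt}\wtilde\varphi_t=X\circ\wtilde\varphi_t$, so $\wtilde\varphi_\epsilon=\mathrm{id}+\epsilon X+O(\epsilon^2)$ in $C^k$, with the $O$ controlled by $C^{k+1}$-norms of $X$ on $\wtilde K'$ — this is where the extra derivative of regularity on $X$ is used). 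Then I would run the telescoping/triangle-inequality argument: writing $\Delta_m=|(\mathrm{id}+u)^m-\wtilde\varphi_{m\epsilon}|_{C^k(\wtilde K)}$ and using the semigroup property $\wtilde\varphi_{m\epsilon}=\wtilde\varphi_\epsilon^{\,m}$, decompose
$$
(\mathrm{id}+u)^m-\wtilde\varphi_\epsilon^{\,m}=\sum_{j=0}^{m-1}(\mathrm{id}+u)^{m-1-j}\circ\big((\mathrm{id}+u)-\wtilde\varphi_\epsilon\big)\circ\wtilde\varphi_\epsilon^{\,j}.
$$
Applying the chain rule and the uniform Lipschitz (in $C^k$) bounds: each term is controlled by $\Lambda^{m-1-j}\cdot c_3\epsilon^2\cdot(\text{bound on }\wtilde\varphi_\epsilon^{\,j}\text{ in }C^k)$, and since $m\epsilon\le c_0$ we have $\Lambda^{m}\le\Lambda^{c_0/\epsilon}$, which is too big — so the naive estimate is not enough, and this is the main obstacle (see below). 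The correct bookkeeping uses that $\mathrm{id}+u=\mathrm{id}+\epsilon X+(u-\epsilon X)$ is $\Lambda$-Lipschitz with $\Lambda=1+O(\epsilon)$ (since $X$ is $C^{k+1}$, hence $C^k$-Lipschitz with a fixed constant $L$, so $\mathrm{id}+\epsilon X$ has $C^k$-Lipschitz constant $1+L\epsilon$, and the perturbation $u-\epsilon X$ adds at most $c_0\epsilon^2$): thus $\Lambda^{m}\le(1+L\epsilon+c_0\epsilon^2)^{m}\le e^{(L+c_0\epsilon_0)m\epsilon}\le e^{(L+1)c_0}$, a constant depending only on $c_0,L$. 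Similarly the $C^k$-norms of $\wtilde\varphi_\epsilon^{\,j}=\wtilde\varphi_{j\epsilon}$ for $j\epsilon\le c_0$ are bounded by a constant $L'$ depending only on $c_0$ and $X$ (Gronwall for the variational equation along the flow). Hence
$$
\Delta_m\le\sum_{j=0}^{m-1}e^{(L+1)c_0}\,L'\,c_3\,\epsilon^2= m\,e^{(L+1)c_0}L'c_3\,\epsilon^2\le e^{(L+1)c_0}L'c_3\,c_0\,\epsilon=:c_1\epsilon,
$$
using $m\epsilon\le c_0$, which is exactly the claimed bound. One must also verify inductively that the iterates $(\mathrm{id}+u)^j(z)$, $z\in\wtilde K$, stay in $\wtilde K'$ where all these bounds hold: since $\wtilde\varphi_{j\epsilon}(z)\in\mathrm{int}\,\wtilde K'$ by the choice of $\wtilde K'$, and $\Delta_j\le c_1\epsilon\le c_1\epsilon_0$, shrinking $\epsilon_0$ (so that $c_1\epsilon_0<\mathrm{dist}(\wtilde\varphi_{[0,c_0]}(\wtilde K),\partial\wtilde K')$) keeps the iterates inside, closing the induction.

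I would therefore structure the written proof as: (1) choose $\wtilde K'$, $\epsilon_0$, and record the uniform $C^k$ bounds $L,L',\Lambda=1+O(\epsilon)$ for $X$, $\wtilde\varphi_t$, $\mathrm{id}+u$ on $\wtilde K'$ for $|t|\le c_0$ and $m\epsilon\le c_0$; (2) prove the one-step estimate $|(\mathrm{id}+u)-\wtilde\varphi_\epsilon|_{C^k(\wtilde K')}\le c_3\epsilon^2$ via a time-Taylor expansion of the flow (using $C^{k+1}$); (3) telescope, using the semigroup property and the chain rule with the Lipschitz constants, noting $\Lambda^m\le e^{(L+1)c_0}$ because $m\epsilon\le c_0$ and $\Lambda-1=O(\epsilon)$; (4) a short inductive argument that the iterates remain in $\wtilde K'$, with $\epsilon_0$ shrunk accordingly. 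The one genuinely delicate point — and the step I expect to be the main obstacle — is making sure the exponential Lipschitz blow-up is harmless: this works only because the iterated map is an $O(\epsilon)$-perturbation of the identity (Lipschitz constant $1+O(\epsilon)$) and the number of iterations is $O(1/\epsilon)$, so the product $\Lambda^m$ stays bounded; a careless estimate with a fixed Lipschitz constant $>1$ would fail, and one also has to be careful that the constant in the one-step bound and in the Lipschitz bounds are taken uniformly over all admissible $u$ (which they are, by the hypothesis $|u-\epsilon X|_{C^k(U)}\le c_0\epsilon^2$ and compactness of $\wtilde K'$).
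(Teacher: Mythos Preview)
Your overall strategy --- one-step $O(\epsilon^2)$ local error plus controlled propagation over $O(1/\epsilon)$ steps, with an induction keeping the iterates in $\wtilde K'$ --- is the right one, and your one-step estimate $|(\mathrm{id}+u)-\wtilde\varphi_\epsilon|_{C^k}\le c_3\epsilon^2$ is correct. But there is a genuine regularity gap in the propagation step.

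You telescope with the iterates of $\mathrm{id}+u$ on the \emph{outside} and then assert that $\mathrm{id}+u$ has ``$C^k$-Lipschitz constant'' $\Lambda=1+L\epsilon+c_0\epsilon^2$. The $\epsilon X$ part is fine (since $X\in C^{k+1}$), but the sentence ``the perturbation $u-\epsilon X$ adds at most $c_0\epsilon^2$'' is not justified: the hypothesis $|u-\epsilon X|_{C^k}\le c_0\epsilon^2$ bounds the $k$-th derivatives of $u-\epsilon X$ but gives no modulus of continuity for them, since $u$ is only $C^k$. Concretely, in the top Fa\`a di Bruno term of $D^k\big((\mathrm{id}+u)\circ a-(\mathrm{id}+u)\circ b\big)$ one meets $\big(D^k(u-\epsilon X)\circ a-D^k(u-\epsilon X)\circ b\big)(Da)^{\otimes k}$, and all you can say is $|D^k(u-\epsilon X)\circ a-D^k(u-\epsilon X)\circ b|_0\le 2c_0\epsilon^2$, an \emph{additive} term independent of $|a-b|$. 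The honest one-step bound is therefore
\[
|(\mathrm{id}+u)\circ a-(\mathrm{id}+u)\circ b|_{C^k}\le (1+C\epsilon)\,|a-b|_{C^k}+C'\epsilon^2,
\]
and iterating this $m-1-j$ times before summing the $m$ telescoping terms produces a bound of order $m^2\epsilon^2=O(1)$, not $O(\epsilon)$.

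The paper avoids this by placing the \emph{flow} --- which is $C^{k+1}$ since $X$ is --- in the slot where the Lipschitz-in-$C^k$ estimate is needed. Instead of a global telescoping it derives the one-step recursion
\[
\alpha_{m+1}\le \big|(\mathrm{id}+u-\wtilde\varphi_\epsilon)\circ(\mathrm{id}+u)^m\big|_k+\alpha_m+\big|(\wtilde\varphi_\epsilon-\mathrm{id})\circ(\mathrm{id}+u)^m-(\wtilde\varphi_\epsilon-\mathrm{id})\circ\wtilde\varphi_{m\epsilon}\big|_k,
\]
and bounds the last term via $|\wtilde\varphi_\epsilon-\mathrm{id}|_{C^{k+1}}\le C\epsilon$ (this is precisely where the extra derivative of $X$ is spent), obtaining $\alpha_{m+1}\le C_1\epsilon^2+(1+C_2\epsilon)\alpha_m$ and hence $\alpha_m\le c_1\epsilon$. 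Equivalently, your argument is repaired by telescoping in the opposite order, putting $\wtilde\varphi_\epsilon^{\,j}$ on the outside and $(\mathrm{id}+u)^{m-1-j}$ on the inside; this uses only a $C^k$ composition bound on the inner map (available) and a $C^k$-Lipschitz bound on the outer flow map (available, since $\wtilde\varphi_t\in C^{k+1}$), combined with the inductive control of $|(\mathrm{id}+u)^l|_{C^k}$ that you already set up.
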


\begin{proof}
This result is related to the convergence of the Euler method for the differential equations; however it may be convenient
to provide some details.

For a $C^k$ map $w$ from $\wtilde {U}$ to $\R^n \times (\R^n)^*$, we introduce the notation 
$|w|_k=|w|_{C^k(\wtilde{K})}=\sup\limits_{z\in \wtilde{K} , |\alpha| \leqslant k} |D^\alpha w (z)|$.   We shall use the following (rough) estimates,
where $C(s)$ stands for any positive non-decreasing function of the positive variable $s$. \\[1mm]
i) For $\rho \in C^{k} \big(T^* \T^n, \R^n \times (\R^n)^*\big)$ and $a \in C^k(\wtilde{U}, T^* \R^n)$
with $a(\wtilde{K}) \subset \wtilde{U}$,
$$
|\rho \circ a |_k \leqslant C ( |a|_k) |\rho |_{C^k(\wtilde{U})}  \, . 
$$
ii) For $w \in C^{k+1} \big(T^* \T^n , \R^n \times (\R^n)^*\big)$  and  $a,b \in C^k( \wtilde{U}, 
T^* \R^n)$ 
such that 
for all $ x \in \wtilde{K}$, $[a(x),b(x)] \subset \wtilde{U}$,  
$$
| w \circ a - w \circ b|_k \leqslant C( |a|_k + |b|_k) |a-b|_k |w|_{C^{k+1}(\wtilde{U})}  
$$
iii) For $\ep \geqslant 0$ small enough,
$$
|\wtilde{\varphi}_\ep - id |_{C^{k+1} (\wtilde{U})} \leqslant C \ep    \quad , \quad
|\wtilde{\varphi}_\ep - id - \ep X|_{C^k (\wtilde{U})} \leqslant C \ep^2 \, .  
$$
Let us fix $\delta>0$
such that $\delta < {\rm dist} (\wtilde{K}, \partial \wtilde{U})$.
Under the assumptions of Lemma \ref{euler}, let us introduce, for $m \in \N$, 
$\ \alpha_m=|(id+u)^m -\wtilde{\varphi}_{m \ep}|_k \, $. As long as $\alpha_m \leqslant \delta$ and $m\ep \leqslant c_0$, we have
\begin{align}  \label{estalpha1} 
\alpha_{m+1}&= |(id+u)^{m+1} -\wtilde{\varphi}_{(m+1) \ep}|_k  \nonumber \\
&\leqslant  |(id +u -\wtilde{\varphi}_\ep) \circ (id+u)^m|_k +
|(id+u)^m-\wtilde{\varphi}_{m \ep}|_k  \nonumber \\
&\qquad \qquad \qquad \qquad \qquad\qquad \quad  +  |(\wtilde{\varphi}_\ep-id)\circ (id+u)^m   - (\wtilde{\varphi}_\ep-id)\circ
\wtilde{\varphi}_{m\ep} |_k   
\end{align} 
Since $|(id+u)^m -\wtilde{\varphi}_{m \ep}|_0  \leqslant \alpha_m \leqslant \delta$ and $\wtilde{\varphi}_{m\ep} (\wtilde{K}) 
\subset \wtilde{K}$, there holds $$\ \forall x \in \wtilde{K} \, , \ [\wtilde{\varphi}_{m\ep}(x) , (id +u)^m (x)] \subset \wtilde{U}.$$
Hence, by i) and iii), 
\begin{align} \label{est1}
|(id +u -\wtilde{\varphi}_\ep) \circ (id+u)^m|_k & \leqslant C(|(id +u)^m|_k) | id +u -\wtilde{\varphi}_\ep |_{C^k (\wtilde{U})}
\nonumber \\
& \leqslant C(\alpha_m + | \wtilde{\varphi}_{m \ep}|_k) (C \ep^2 + |u-\ep X|_\cku) \nonumber \\
&\leqslant C(\delta , c_0) \ep^2  \,  , 
\end{align}
and by ii),
\begin{align} \label{est3}
|(\wtilde{\varphi}_\ep-id)\circ (id+u)^m   - (\wtilde{\varphi}_\ep-id)\circ
\wtilde{\varphi}_{m\ep} |_k   &\leqslant C( |(id +u)^m|_k + |\wtilde{\varphi}_{m\ep}|_k) \, 
| (id+u)^m- \wtilde{\varphi}_{m\ep}|_k \nonumber \\
& \quad \quad  \quad \quad   \quad \quad \quad \quad   \quad \quad  \times |\wtilde{\varphi}_{\ep} -id|_\ckpu  \nonumber \\
&\leqslant C(\delta , c_0) \ep \alpha_m \, . 
\end{align}
From (\ref{estalpha1}), (\ref{est1}) and (\ref{est3}), we obtain
\be \label{estalpha}
\alpha_{m+1} \leqslant C_1(\delta , c_0) \ep^2 + (1+ C_2(\delta , c_0)\ep ) \alpha_m
\ee
We can assume $C_2(\delta,c_0) \geqslant 1$ without loss of generality. 
Setting $\Gamma= \dps \frac{C_1(\delta,c_0)}{C_2(\delta,c_0)}$, using $\alpha_0=0$, we derive from 
$(\ref{estalpha})$ 
$$
\alpha_m + \Gamma \ep \leqslant \big(1+ C_2(\delta,c_0) \ep\big)^m \Gamma \ep 
$$
as long as $\big(1+ C_2(\delta,c_0) \ep\big)^m \Gamma \ep  \leqslant \delta$ and $m\ep \leqslant c_0$. Hence 
let us define $c_1=\sup\limits_{\ep \in (0,1]} (1+ C_2(\delta , c_0) \ep)^{c_0/\ep} \Gamma$ and choose
$\ep_0 >0$ such that iii) holds for $\ep \in [0, \ep_0]$  and $\ep_0 \leqslant \min (1, \delta / c_1)$. 
Note that $c_1$ and $\ep_0$ depend only on $\delta , c_0$ and the vectorfield $X$. 
For $m\ep \leqslant c_0$, and $\ep \leqslant 
\ep_0 $ then  $\alpha_m \leqslant c_1 \ep$. 
\end{proof}
\begin{proof}[Proof of  Proposition \ref{NF}] We introduce a first symplectic change of variable, with
$$
G_0(\theta , I)= \big(\theta \;  ,\;  I + I_\infty + Du (\theta)\big) \quad ,  \quad H_0=H \circ G_0
$$
We have  $\phi^H_t \circ G_0 = G_0 \circ \phi^{H_0}_t$. Note that, since $G_0$ preserves the fibers 
and its restriction to each fiber is a translation,  
 $H_0$  is a Tonelli Hamiltonian without conjugate points, as $H$.  The Hamiltonian flow
associated to $H_0$ preserves 
$0_{\T^n} =\T^n \times \{ 0\}$ and satisfies 
$$
\forall \theta \in \T^n \, , \quad  \phi^{H_0}_T (\theta , 0)=(\theta ,0) \, .
$$
This implies that the differential of $\phi^{H_0}_T$ at $(\theta, 0)$ takes the form
$$
{D\phi^{H_0}_T}{(\theta,0)} [\Delta \theta , \Delta I]= \big( \Delta \theta + A(\theta) \Delta I \; , \;  Q(\theta) \Delta I \big)
$$
with $A(\theta)=\partial_I \phi^{H_0}_{T,1}(\theta ,0) \in L\big((\R^n)^*, \R^n\big)$, and 
$ Q(\theta)=\partial_I \phi^{H_0}_{T,2} (\theta ,0) \in  L\big((\R^n)^*, (\R^n)^*\big)$.   
The linear map ${D\phi^{H_0}_T}{(\theta,0)}$ being symplectic, $Q(\theta)=id_{(\R^n)^*}$ and $A(\theta)$ is symmetric. 

Let us justify that $A(\theta)$ is positive definite. Given $\theta \in \T^n$, since 
$\phi^{H_0}_t (0_{\T^n}) \subset 0_{\T^n}$, we can write for $t \in [0,T]$, 
$$
D\phi^{H_0}_t (\theta , 0) (\Delta \theta , \Delta I)=(P_t \Delta \theta + A_t \Delta I \; ,\; 
Q_t \Delta I) \, ,
$$
with $P_0=id_{\R^n}$, $Q_0=id_{(\R^n)^*}$, $A_0=0$. Moreover, $D\phi^{H_0}_t (\theta ,0)$ being symplectic,
$P_t^T Q_t = id_{(\R^n)^*}$ and $S_t=Q_t^T A_t$ is in the space $ L_s\big((\R^n)^* , \R^n\big)$ of symmetric
linear maps from $(\R^n)^*$ to $\R^n$. We have
$$
\frac{d S_t}{dt}_{|t=0}=  \frac{d A_t}{dt}_{|t=0} =\frac{d }{dt} \partial_I \phi_{t,1}^{H_0}(\theta , 0)_{|t=0}=
\partial_I X_{H_0,1} (\theta , 0) = \partial^2_I H_0 (\theta ,0) \, .
$$
Thus $\dps \frac{d S_t}{dt}_{|t=0}$ is symmetric positive definite. Moreover, since 
$H_0$ is without conjugate points, $A_t$ is invertible for all $t\in (0,T]$, and
so is $S_t=Q_t^T A_t$. Since $S_t$ is positive definite for small $t>0$, we can conclude
that $S_t$ remains symmetric positive definite for all $t\in (0,T]$. In particular, 
$A(\theta)=S_T$ is symmetric positive definite. 
 
Let us consider the Taylor expansion of $\phi^{H_0}_T$ with respect to $I$, in the neighborhood of 
$0_{\T^n}=\T^n \times \{ 0\}$,
\be \label{taylor}
\phi^{H_0}_T (\theta , I)= \big(\theta  +  A(\theta) I + O(I^2) \; ,\;   I + \frac{1}{2} \la 
B(\theta) I, I \ra + O(I^3)\big) \, ,  
\ee
where $B(\theta )= \partial^2_I \phi^{H_0}_{T,2} (\theta, 0) \in (\R^n)^* \otimes L_s \big((\R^n)^* , \R^n\big)$. 

For $\ep >0$, define ${\cal R}_\ep : T^* \T^n \to T^* \T^n$ by 
$$
{\cal R}_\ep (\theta , I)= (\theta , \ep I) \, ,
$$
and let $\Phi : (0, 1] \times T^* \T^n \to T^* \T^n $ be defined by $\ \Phi(\ep, .)= 
{\cal R}_\ep^{-1} \circ  \phi^{H_0}_T \circ {\cal R}_\ep  $. Since ${\cal R}_\ep^* (I .d\theta)= \ep I. d\theta$, 
the map $\Phi(\ep, .)$ is exact symplectic as $\phi^{H_0}_T$ is. 
By $(\ref{taylor})$, $\Phi$ can be
smoothly extended to a map on $[0, 1] \times T^* \T^n$ with $\Phi(0, .)=id_{T^* \T^n}$ and, as $\ep$ tends to $0$,
\be \label{taylorep}
\Phi (\ep , \theta , I)=\big(\theta + \ep A(\theta) I + O(\ep^2)\;  ,\;  I + \frac{\ep}{2}
\la B(\theta) I , I \ra + O(\ep^2)\big) \, . 
\ee
We have
$$
Y(\theta , I) := \frac{d}{d\ep} \Phi(\ep, \theta , I)_{| \ep =0} = \big(A(\theta) I\, ,\,  \frac{1}{2} \la B(\theta) I \, ,\,  I \ra\big) \, .
$$
Now, since $\Phi(\ep , .)$ is exact symplectic for all $\ep \geqslant 0$, $Y$ is a Hamiltonian
vector-field: there is $F \in C^\infty ( T^* \T^n , \R)$ such that $Y=X_F$. We have then 
$\partial_\theta F(\theta , 0) =0$, hence $F(\theta , 0)$ does not depend on $\theta$ and we may impose
$F(\theta, 0)=0$. Since $\partial_I F(\theta, I)=A(\theta) I$, we have $F(\theta , I)=
\dps \frac{1}{2} \la A(\theta ) I , I \ra$, hence $ B(\theta )  = - (\partial_\theta A) (\theta)$. 
As a result,
\be \label{taylorplus}
\phi^{H_0}_T (\theta , I)= \big(\theta  +  A(\theta) I + O(I^2) \; ,\;  I - \frac{1}{2} \partial_\theta \la 
A(\theta) I, I \ra + O(I^3)\big) \, . 
\ee
Note that the flow $(\phi_t^F)$ spanned by $X_F$ is the geodesic Hamiltonian  flow 
associated to the Riemannian metric on $\T^n$ 
$$
g (\theta)[U,V]=\la A(\theta)^{-1} U ,V \ra
$$
We are going to prove that the geodesic flow of $g$ has no conjugate points. 
Arguing by contradiction, we assume the contrary. Then there are two points
$x$ and $y$ of $\R^n$ that are connected in time $S$ by a non-minimizing
geodesic path $\gamma_1$. We may assume that $\gamma_1$ is non-degenerate, using
the fact that along a geodesic path, the points that are conjugate to the
starting point are isolated.  Now there is also a minimizing geodesic path
$\gamma_2 : [0,S] \to \T^n$ connecting $x$ and $y$, and we may assume 
that $\gamma_2$ is non-degenerate: if not, we just substitute 
$S-\delta$ to $S$ and $\gamma_2 (S-\delta)$ to $y$. For $\delta \in (0,S)$,
${\gamma_2}_{|[0,S-\delta]}$ is a non-degenerate minimizing geodesic path, 
and if $\delta$ is small enough, by the implicit function theorem there is also
a non-degenerate geodesic path $\ov{\gamma_1}$ (close to ${\gamma_1}_{|[0, S-\delta]}$)
connecting $x$ and $\gamma_2 (S-\delta)$ in time $S-\delta$. 

So we have two points $x$ and $y$ of $\R^n$ connected in some time $S>0$
by two distinct non-degenerate geodesic paths.  In other words, there are
$I_1 \neq I_2$ such that 
$$
 \wtilde{\phi}^F_{S,1} (x,I_1) =  \wtilde{\phi}^F_{S,1} (x,I_2) = y \, ,
$$   
with $\partial_I \wtilde{\phi}^F_{S,1} (x,I_1)$ and $\partial_I \wtilde{\phi}^F_{S,1} (x,I_2)$ invertible.

Let us consider the compact subset of $T^*\T^n$ 
$$
K= \{ (\theta , I) \in T^* \T^n  \ : \  \la A(\theta) I , I \ra \leqslant R \, \}
$$
where $R$ is such that $(x, I_i) \in {\rm int} (\wtilde{K})$. Note that 
$\wtilde{K}$ is invariant by $(\wtilde{\phi}^F_t)$. Let us denote by
$\ov{\Phi}_\ep : T^* \R^n \to T^* \R^n$ the lift of $\Phi(\ep, .)$ such that
$\ov{\Phi}_\ep -id_{T^* \R^n}=O(\ep)$. Thus we have
$$
\ov{\Phi}_\ep (x, I)= \big(x+ \ep A(x) I + O(\ep^2)\, , \,  I - \frac{\ep}{2} \partial_x \la A(x) I , I \ra + O(\ep^2)\big) = (x,I) + \ep X_F(x,I) + O(\ep^2) \, . 
$$
By Lemma \ref{euler}, there are $\ep_0 >0$ and $c_1 \geqslant 0$ such that 
$$
\forall \ep \in (0, \ep_0] \, , \ \forall m \in \N \cap [1, 2S/\ep ] \, , \quad
| \ov{\Phi}_\ep^m - \wtilde{\phi}^F_{m\ep } |_{C^1 (\wtilde{K})} \leqslant c_1 \ep \, . 
$$
Choosing $\ep_m =S/m$, for $m$  large enough, 
$\ | \ov{\Phi}_{\ep_m}^m - \wtilde{\phi}^F_{S } |_{C^1 (\wtilde{K})} \leqslant c'/m $.
Hence, by the implicit function theorem, there are $p$ and 
$\ov{I_1}, \ov{I_2} \in (\R^n)^*$ with $\ov{I_1} \neq \ov{I_2}$, $\ov{I_i}$ close to $I_i$,
such that
\be  \label{noninj}
\pi \circ \ov{\Phi}_{\ep_p}^p (x, \ov{I_1}) = \pi \circ \ov{\Phi}_{\ep_p}^p (x, \ov{I_2})=y \, .
\ee
Define $\tau_r : T^* \R^n \to T^* \R^n$ by $\tau_r (x,I)=(x+r,I)$; $\tau_r$ commutes with
$\ov{\Phi}_{\ep}$, and 
$${\cal R}_\ep^{-1} \circ \wtilde{\phi}^{H_0}_T \circ {\cal R}_\ep= \tau_r \circ 
\ov{\Phi}_{\ep}.$$
 Hence $ {\cal R}_\ep^{-1} \circ \wtilde{\phi}^{H_0}_{p T} \circ 
{\cal R}_\ep =\tau_{p r} \circ \ov{\Phi}_{\ep}^p$.
By $(\ref{noninj})$, we obtain
$$
\pi \circ \wtilde{\phi}^{H_0}_{p T} (x, \ep_p \ov{I_1})=\pi \circ \wtilde{\phi}^{H_0}_{p T} (x, \ep_p \ov{I_2}) \, ,
$$
which contradicts the fact that $(\pi \circ \wtilde{\phi}^{H_0}_{p T} )_{|\pi^{-1} (x)}$ is injective.
As a conclusion, the geodesic flow of $g$ has no conjugate points. 

By the
theorem of Burago and Ivanov, this implies that $g$ is flat: there
exists a $C^\infty$ diffeomorphism $\psi$  of $\T^n$  isotopic to the identity  and a positive definite
$\ov{B} \in L_s\big(\R^n , (\R^n)^*\big)$  such that 
$$
\forall \theta \in \T^n \, , \ \forall (U,V) \in \R^n \times \R^n \, , \quad
\big\la A\big(\psi(\theta) \big)^{-1} D\psi  (\theta) \cdot U\;  ,\;  D\psi  (\theta) \cdot V \big\ra
=\la \ov{B} U , V\ra \, . 
$$
Hence we have
\be \label{AA}
\forall \theta \in \T^n \, , \quad  D\psi  (\theta)^{-1} A \big(\psi(\theta)\big) D\psi  (\theta)^{-T}= \ov{A}:= \ov{B}^{-1} \, .
\ee
Let us consider the symplectic diffeomorphism of $\TT$
\be \label{g1}
G_1(\theta , I)= \big(\psi(\theta)\; ,\;  D\psi (\theta)^{-T}  I\big)
\ee
and the Hamiltonian $H_1=H_0 \circ G_1$. Then $H_1=H \circ G$, with
$$
G(\theta , I)= G_0 \circ G_1 (\theta , I)=\big(\psi(\theta)\; ,\;  I_\infty + Du \big(\psi(\theta)\big) + 
D\psi (\theta)^{-T}  I \big)
$$
and
$$
G^{-1} \circ \phi_T^H \circ G= G_1^{-1} \circ \phi_T^{H_0} \circ G_1 = \phi_T^{H_1} \, .
$$
We have $\phi_T^{H_1} (\theta , 0)= (\theta , 0)$. By $(\ref{g1})$ and an easy computation,
$$
\partial_I \phi_{T,1}^{H_1} (\theta , 0)=\partial_I (G_1^{-1} \circ \phi_T^{H_0} \circ G_1)_1 (\theta , 0)= 
D\psi  (\theta)^{-1} A \big(\psi(\theta)\big) D\psi  (\theta)^{-T}= \ov{A}
$$
With the same arguments that we had used to obtain $(\ref{taylorplus})$,
we can conclude that 
$$
\phi_T^{H_1} (\theta, I)=\big(\theta + \ov{A} I + O(I^2)\;  ,\;  I + O(I^3)\big) \, . 
$$
The proof of Proposition \ref{NF} is complete. 
\end{proof}

\subsection{Proof of Proposition \ref{KAM}}

As in the proof of Proposition \ref{NF}, we introduce 
$\ \Phi_\ep =  {\cal R}_\ep^{-1} \circ \phi_T^H \circ {\cal R}_\ep$, where ${\cal R}_\ep (\theta , I)=(\theta , \ep I)$. 
Then $\Phi_\ep$ is an exact symplectic diffeomorphism
of $\TT$ and by $(\ref{p2nf} )$, one has
$$
\Phi_\ep (\theta, I)= \big(\theta + \ep \ov{A} I + O(\ep^2) \; ,\;  I + O(\ep^2)\big) \, . 
$$
By Lemma \ref{euler}, the exact symplectic diffeomorphism ${\cal U}_m = \Phi_{1/m}^m$ satisfies
$$
{\cal U}_m (\theta , I)=\big(\theta + \ov{A} I + O(1/m)\; ,\;  I+O(1/m)\big) \, .
$$
Here $O(1/m)$ means that for any $k\in \N$,  and any compact neighborhood $K$ of $0_{\T^n}$,
the $C^k$ norm of the remainder restricted to $K$ is less or equal to $C_{k,K}/m$. 

The matrix $\ov{A}$ being non-degenerate, by a  theorem essentially due to J. Moser 
(see for instance Theorem 1.2.3. in \cite{bost} , or \cite{zehnder}), for any strongly Diophantine $\ov{\om}$, for $m$ large enough
(depending on  $\ov{\om}$) there is a Lagrangian
embedding  $\rho_m : \T^n  \to \TT$ with $\rho_m(\eta)=\big(\eta + o(1)\;  ,\;   \ov{A}^{-1} \ov{\om} + o(1)\big)$ such that
$$
\forall \eta \in \T^n \, , \quad  {\cal U}_m \big(\rho_m (\eta)\big)=\rho_m (\eta + \ov{\om}) \, .
$$
Here $u_m=o(\alpha_m)$ means that for any $k \geqslant 0$, $\alpha_m^{-1} ||u_m||_{C^k (\T^n)}$ tends to $0$
as $m \to \infty$.
Now let $j_m(\eta)=  {\cal R}_{1/m} \big(\rho_m (\eta)\big)$.  Then 
$$
\phi^H_{mT} \big(j_m(\eta)\big)=j_m (\eta + \ov{\om}) \, . 
$$
Moreover, we have
$$
j_m (\eta ) = \Big(\eta + o(1)\;  ,\;  \frac{\ov{A}^{-1} \ov{\om}}{m} + o(1/m)\Big) \, .
$$
This completes the proof of Proposition \ref{KAM} .

\subsection{Abundance of strictly ergodic invariant tori}
Following \cite{FatHe}, we will say that a set $K\subset T\T^n$ that is invariant by a Tonelli flow $(\phi_t^H)$ is {\em strictly ergodic} if:
\begin{enumerate}
\item[--] the restricted flow $(\phi^H_{t|K})$ has a unique invariant Borel probability measure; this measure is denoted by $\mu$;
\item[--] the support of $\mu$ is $K$.
\end{enumerate}
If $K$ is strictly ergodic, then the restricted flow $(\phi^H_{t|K})$ is minimal (i.e. has no nontrivial invariant closed subset).

A.~Fathi and M.~Herman proved in \cite{FatHe} that if $(X, d)$ is a compact metric space, then the set of its strictly ergodic homeomorphisms is a $G_\delta$-subset of the set of its homeomorphisms endowed with its usual $C^0$ topology. 

If $\Tc$ is one of the tori given by theorem \ref{quasi}, then the homeomorphism $(\phi^H_{1|\Tc})$ is strictly ergodic. As the set of the tori given by theorem \ref{quasi} is dense in the set of the tori given by theorem \ref{th1}, we deduce:

\begin{theorem}
Let $(\phi^H_t)$ be a $C^\infty$ Tonelli flow of $T^*\T^n$ with no conjugate points  and let $\mathfrak F$ be the continuous foliation in invariant Lagrangian tori that is given by theorem \ref{th1}. Then there is a dense $G_\delta$ subset $\Gc$ of $\mathfrak F$ such that, for every $\Tc \in\Gc$, then $\phi^H_{1|\Tc}$ is strictly ergodic.
\end{theorem}


\section{Zero entropy}
 


   Since the arguments of this section are more general, $M$ will denote a closed, smooth manifold. Obviously, the case of the torus $\T^n$ is included in this setting. Recall that in this setting, the notion of $C^0$ integrability persists. A Hamiltonian $H$ on $T^*M$ is said to be $C^0$ integrable if $T^*M$ is partitioned by Lipschitz, invariant Lagrangian  submanifold which are Hamiltonianly isotopic to the $0$-section. As proved in \cite{ArBi,Ber}, in the case of Tonelli Hamiltonians, those submanifolds must be graphs.
   
          \begin{theorem}\label{T1}
   Let $H:T^*M\rightarrow \R$ a $C^3$ Tonelli Hamiltonian that is $C^0$ integrable. Then for every invariant Borel probability measure, the Lyapunov exponents are zero.
    \end{theorem}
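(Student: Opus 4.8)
The plan is to work with the Green bundles and to exploit that, by \cite{arnaud4}, $C^0$ integrability forces the absence of conjugate points, so that the positive and negative Green bundles $G_+$ and $G_-$ (recalled in the appendix) are defined along every orbit.

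\emph{Reduction to one leaf.} By the ergodic decomposition it suffices to prove the statement for an ergodic invariant probability measure $\mu$. The leaves of the $C^0$ partition are compact (they are graphs over $M$), flow-invariant, and the ``which leaf'' map $T^*M\to(\text{leaf space})$ is a Borel flow-invariant map; hence it is $\mu$-a.e.\ constant, so $\mu$ is carried by a single leaf $\Lambda$, a Lipschitz, Lagrangian, flow-invariant graph. On the compact set $\Lambda$ the Green bundles $G_\pm(x,p)$ are defined at every point, are Lagrangian, $D\phi^H_t$-invariant, and uniformly transverse to the vertical (a consequence of the absence of conjugate points and the semicontinuity of $G_\pm$).

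\emph{Key step and how it closes the proof.} The heart of the matter is the equality $G_-(x,p)=G_+(x,p)$ for $\mu$-almost every $(x,p)\in\Lambda$. Granting it, write $G:=G_-=G_+$. If $w\in G_+(x,p)$ had a nonzero component along an Oseledec subspace of negative Lyapunov exponent, then $\|D\phi^H_{-t}(x,p)\,w\|$ would grow exponentially; since $D\phi^H_{-t}w$ remains in the bundle $G_+$, which is uniformly transverse to the vertical on $\operatorname{supp}\mu$, its image $D(\pi\circ\phi^H_{-t})(x,p)\,w$ would also grow exponentially, contradicting the standard property that this image stays bounded for $w\in G_+$ (appendix, \cite{Cont}, \cite{arnaud1}). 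Hence $G_+\subseteq E^{\mathrm{cu}}$ and, symmetrically, $G_-\subseteq E^{\mathrm{cs}}$ $\mu$-a.e., where $E^{\mathrm{cu}}$ (resp.\ $E^{\mathrm{cs}}$) is the sum of the Oseledec subspaces of nonnegative (resp.\ nonpositive) exponent. Therefore $G=G_-=G_+\subseteq E^{\mathrm{cu}}\cap E^{\mathrm{cs}}=E^0$, the zero-exponent subspace. Now $G$ is a Lagrangian subspace of $T_{(x,p)}T^*M$ and, the Hamiltonian flow being symplectic, $E^0$ is a symplectic subspace (the Oseledec subspaces of exponents $\lambda$ and $-\lambda$ are $\omega$-dual, while $E^0$ is $\omega$-orthogonal to all of them); a Lagrangian subspace contained in a symplectic subspace forces that subspace to be the whole tangent space, so $E^0=T_{(x,p)}T^*M$ and all Lyapunov exponents of $\mu$ vanish. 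The ergodic decomposition then gives the statement for every invariant measure; Ruelle's inequality yields zero metric entropy for every invariant measure, and the variational principle yields zero topological entropy.

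\emph{The main obstacle.} Everything reduces to the key step $G_-=G_+$ on $\Lambda$, and this is precisely where full $C^0$ integrability --- the existence of a whole continuous foliation of $T^*M$ by invariant Lagrangian graphs, not merely one such graph --- is indispensable: geodesic flows in negative curvature have no conjugate points and their stable/unstable bundles are invariant and Lagrangian, yet they are uniformly hyperbolic, and what fails there is exactly that those invariant Lagrangian subbundles are not tangent to a foliation by graphs. The strategy I would pursue mirrors the Riemannian rigidity of Heber and of Burago--Ivanov used in \cite{BI}: along each leaf $\Lambda'$ the tangent planes are squeezed between \emph{that} leaf's own Green bundles, $G_-(y,q)\preceq T_{(y,q)}\Lambda'\preceq G_+(y,q)$ at every point of differentiability of $\Lambda'$, where $\preceq$ is the order on Lagrangian planes transverse to the vertical coming from the convexity of $H$ (appendix, \cite{Cont}, \cite{arnaud1}); since the leaves form a genuine continuous foliation of the whole $T^*M$ by uniformly Lipschitz graphs, these tangent planes fill the order interval $[G_-,G_+]$ at each point, and, using the monotonicity of $D\phi^H_t$ on such planes together with Poincar\'e recurrence, a strict inequality $G_-\prec G_+$ at one point would be propagated along the orbit and would eventually force two leaves of the foliation to cross, contradicting the foliation property. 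Making this filling/rigidity argument precise --- in particular, dealing with the fact that the leaves, and hence the Green bundles, are only Lipschitz and only semicontinuous --- is the technical core of the proof.
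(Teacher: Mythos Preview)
Your framework is right --- reduce to an ergodic measure $\mu$ supported on a single leaf $\Gamma$ and bring in the Green bundles --- but the argument has a genuine gap at the key step, and a smaller one before it. The smaller one: the claim ``$w\in G_+ \Rightarrow \|D(\pi\circ\phi^H_{-t})w\|$ stays bounded'' is not the criterion recalled in the appendix; that criterion is the other implication ($w\notin G_+\Rightarrow$ unbounded), and the direction you invoke is not obvious. This is harmless, though, because the theorem of \cite{arnaud2} quoted in the appendix gives directly that $\dim(G_-\cap G_+)=n$ $\mu$-a.e.\ is equivalent to all $2n$ exponents vanishing; you can simply cite that.

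The real gap is the key step $G_-=G_+$. Your ``filling'' heuristic does not make sense as stated: through a fixed point $(x,p)$ only \emph{one} leaf passes, so there is only one (generalized) tangent plane available there, squeezed between $G_-(x,p)$ and $G_+(x,p)$; the tangent planes to the other leaves live at other points, where the Green bundles are different, so nothing fills the order interval at $(x,p)$. The propagation/crossing sketch is equally unformed. The paper takes a different route and never proves $G_-=G_+$ directly. It argues by contradiction: if $\mu$ has $d\geqslant 1$ positive exponents, Pesin theory produces $d$-dimensional local stable and unstable discs $D^s,D^u$ through a regular point $\zeta$, and \emph{this} is where the full foliation is used --- any $\zeta'\in D^s\cup D^u$ not on $\Gamma$ would lie on another compact leaf $\Gamma_1$ disjoint from $\Gamma$, yet $d\big(\phi^H_t\zeta,\phi^H_t\zeta'\big)\to 0$ in one time direction, which is impossible; hence $D^s\cup D^u\subset\Gamma$. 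If $\Gamma$ were differentiable at $\zeta$ one would finish immediately, since $T_\zeta D^s\oplus T_\zeta D^u$ is symplectic while $T_\zeta\Gamma$ is Lagrangian. Because $\Gamma$ is only Lipschitz, the paper replaces this by a first-order computation: project to $M$, use the affine foliation with direction $g=D\pi\big(G_-(\zeta)\cap G_+(\zeta)\big)$ (transverse to both $\pi(D^u)$ and $\pi(D^s)$), build curves $\zeta^u(t)\in D^u$ and $\zeta^s(t)\in D^s$ whose base points differ by a vector in $g$, and compute $\lim_{t\to 0}\frac{1}{t}\big(\zeta^u(t)-\zeta^s(t)\big)$ in two ways --- once from the tangents to $D^u,D^s$ (giving $v^u-v^s\notin G_-\cap G_+$), once by integrating $D\gamma$ along the segment and using the semicontinuity of $G_\pm$ to force the limit into $G_-\cap G_+$. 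That contradiction is the actual technical core; your proposal does not supply it or a substitute.
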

Because of Ruelle's inequality (see \cite{ruelle1}), this implies:
\begin{corollary}
The metric entropy of every Borel probability measure that is invariant by a $C^0$ integrable Tonelli Hamiltonian is zero.
\end{corollary}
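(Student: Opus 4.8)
The plan is to exploit the $C^0$ integrable structure: $T^*M$ is foliated by invariant Lipschitz Lagrangian graphs, and Lipschitz Lagrangian graphs are exactly the graphs of (locally) Lipschitz closed $1$-forms, hence differentiable almost everywhere with respect to Lebesgue measure on $M$. The key dynamical input is the theory of Green bundles: along any orbit of a Tonelli flow without conjugate points one has the two invariant Lagrangian subbundles $G_-$ and $G_+$, with $G_-\subset G_+$, and these bundles ``trap'' the dynamics in the sense that an orbit whose vertical displacement stays bounded in forward (resp. backward) time must be tangent to $G_+$ (resp. $G_-$). Since each leaf $\Lc$ of the foliation is flow-invariant, the tangent space $T_{(x,p)}\Lc$ is an invariant Lagrangian subbundle along each orbit contained in $\Lc$; and because $\Lc$ is a graph over the compact base $M$, the projection $\pi\circ\phi^H_t$ restricted to $\Lc$ has uniformly bounded vertical excursions, so at every point of differentiability of the defining $1$-form one gets $T_{(x,p)}\Lc = G_-(x,p) = G_+(x,p)$. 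In particular $G_-=G_+$ on a full-Lebesgue-measure subset of each leaf, hence (integrating over the leaves of the foliation, which is a topological foliation transverse to the fibers) on a subset of $T^*M$ of full Lebesgue measure.

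Next I would bring in Oseledets' theorem. Let $\mu$ be any invariant Borel probability measure; by Ruelle's inequality it suffices to bound the positive Lyapunov exponents, and by time-reversal symmetry of Hamiltonian systems it in fact suffices to show all exponents vanish. The Oseledets splitting at a $\mu$-generic point refines into stable, central, and unstable parts, and the sum of the unstable subspace with the central subspace is an invariant Lagrangian subspace sandwiched between $G_-$ and $G_+$ (this is the standard fact that $G_- \subseteq E^s\oplus E^c$ and $E^c\oplus E^u \subseteq G_+$ for Tonelli flows without conjugate points — see the appendix/\cite{arnaud1},\cite{Cont}). If $G_-=G_+$ at the point, these bundles coincide, forcing $E^u=E^s=\{0\}$, i.e. all Lyapunov exponents are zero at that point. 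So the whole argument reduces to showing that $\mu$-almost every point lies in the set $\{G_-=G_+\}$.

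The main obstacle — and the place the argument needs real care — is exactly this last measure-theoretic point: the set $\{G_-=G_+\}$ has full \emph{Lebesgue} measure, but an arbitrary invariant measure $\mu$ need not be absolutely continuous, so one cannot directly conclude $\mu(\{G_-=G_+\})=1$. To overcome this I would argue leafwise. Fix a leaf $\Lc$; it is the graph of a Lipschitz closed $1$-form $\eta_\Lc$ on $M$, which by Rademacher is differentiable at Lebesgue-a.e.\ point of $M$, and at such points $T\Lc=G_-=G_+$ as above. Now $\phi^H_{t|\Lc}$ conjugates (via $\pi$) to a bi-Lipschitz flow on $M$; a bi-Lipschitz map sends Lebesgue-null sets to Lebesgue-null sets, so the ``bad'' set $B_\Lc=\{x\in M : \eta_\Lc \text{ not differentiable at }x\}$, while not necessarily invariant, is contained in a Lebesgue-null invariant set (its orbit), hence is Lebesgue-null and invariant up to null sets. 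Decomposing $\mu$ over the leaves of the foliation (the foliation being continuous, one disintegrates $\mu$ along the — topological — quotient), it is enough to treat each conditional measure $\mu_\Lc$, which is an invariant measure for the flow on $\Lc\cong M$. Here one invokes that for a $C^3$ Tonelli Lagrangian the restricted dynamics is $C^1$ on the smooth locus, and more robustly: along a $\mu_\Lc$-generic orbit, if the orbit met the bad set on a positive-$\mu_\Lc$-measure set then by Poincaré recurrence and the Lipschitz (hence Lusin-$C^1$, differentiability a.e.) structure one derives a contradiction with the coincidence $G_-=G_+$ being an a.e.\ (Lebesgue) and flow-invariant property — concretely, one shows the closed invariant set $\overline{\{G_-=G_+\}\cap\Lc}$ has full $\mu_\Lc$ measure because its complement is open, invariant, and Lebesgue-null in $\Lc$, and an open invariant set that is Lebesgue-null in a connected Lipschitz graph over a compact manifold with a transitive-on-a-full-measure-set flow must be empty. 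Granting this, $\mu_\Lc(\{G_-=G_+\})=1$ for a.e.\ leaf, hence $\mu(\{G_-=G_+\})=1$, and the theorem follows. The corollary is then immediate from Ruelle's inequality $h_\mu(\phi^H_1)\leqslant \int \sum_i \max(\lambda_i,0)\, d\mu = 0$.
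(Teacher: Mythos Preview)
Your derivation of the corollary from Theorem \ref{T1} via Ruelle's inequality is correct and is exactly what the paper does. The issue is your proposed proof of Theorem \ref{T1} itself, which has a genuine gap at the first dynamical step.

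You assert that at every point of differentiability of the leaf $\Lc$ one has $T_{(x,p)}\Lc = G_-(x,p) = G_+(x,p)$, justified by the observation that ``$\pi\circ\phi^H_t$ restricted to $\Lc$ has uniformly bounded vertical excursions''. This confuses boundedness of the \emph{flow} (the point $\phi^H_t(x,p)$ stays in the compact leaf) with boundedness of its \emph{derivative} (the vector $D(\pi\circ\phi^H_t)\cdot w$ staying bounded). The Green-bundle criterion you invoke concerns the latter, and nothing prevents tangent vectors to an invariant Lipschitz graph from growing under $D\phi^H_t$; indeed, that is precisely what would happen if there were a positive Lyapunov exponent, which is what you are trying to rule out. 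The standard fact (cf.\ the appendix and \cite{arnaud1}) is only $G_- \leqslant T\Lc \leqslant G_+$ at points of differentiability, not equality. So your argument is circular at this point: you need $G_-=G_+$ to conclude zero exponents, but the equality is essentially equivalent to zero exponents (via the theorem of \cite{arnaud2} quoted in the appendix).

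Even granting the first step, the passage from ``$\{G_-=G_+\}$ has full Lebesgue measure on each leaf'' to ``$\mu_\Lc(\{G_-=G_+\})=1$'' is not achieved: your argument shows only that the complement of $\overline{\{G_-=G_+\}\cap\Lc}$ is open and Lebesgue-null, hence empty, i.e.\ that $\{G_-=G_+\}\cap\Lc$ is \emph{dense}. Density of a $G_\delta$ set says nothing about its measure for an arbitrary (possibly singular) invariant measure $\mu_\Lc$.

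The paper's proof proceeds quite differently: it argues by contradiction, takes a $\mu$-regular point $\zeta$ with non-zero exponents, and uses Pesin's local stable/unstable disks $D^s, D^u$, which must lie in the leaf $\Gamma$. The contradiction comes not from a global $G_-=G_+$ statement but from a local computation: using the semi-continuity of $G_\pm$ and the inequality $G_- \leqslant D\gamma \leqslant G_+$ for the Lipschitz section $\gamma$, one shows that a certain difference quotient of $\gamma$ along a segment joining $D^s$ to $D^u$ is forced to lie in $G_-(\zeta)\cap G_+(\zeta)$, while the same quotient converges to $v^u - v^s \notin G_-(\zeta)\cap G_+(\zeta)$. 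This is the genuine replacement for the symplectic/tangent-space argument that works when the leaf is $C^1$ (cf.\ Remark \ref{previous}).
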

   Because of the variational principle (see \cite{katok} p.181 for example), this implies:
   \begin{corollary}
   The topological entropy of the Hamiltonian flow of every $C^0$ integrable Tonelli Hamiltonian is zero.
   \end{corollary}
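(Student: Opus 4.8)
The plan is to show that every Lyapunov exponent of $\mu$ vanishes by squeezing the Oseledets stable and unstable spaces into a single Lagrangian subspace --- the common value of the two Green bundles --- and then using the symplectic pairing between those spaces.

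First, some reductions and general facts. By \cite{arnaud4} (the easy implication in the analogue of Theorem~\ref{th1}), $C^0$ integrability forces $H$ to have no conjugate points; hence, as recalled in the appendix (see also \cite{Cont,arnaud1}), along every orbit the negative and positive Green bundles $G_-$ and $G_+$ are defined, they are measurable, flow-invariant, everywhere Lagrangian subbundles of $T(T^*M)$, transverse to the vertical, with $G_-\le G_+$ in the usual order of Lagrangian subspaces transverse to the vertical. By the ergodic decomposition it suffices to prove the statement for an ergodic $\mu$. Since the $C^0$-integrable partition is by flow-invariant leaves, the leaf-assignment map is measurable and invariant, hence $\mu$-a.e.\ constant: $\mu$ is carried by a single leaf $\mathcal L$, a Lipschitz, Lagrangian, flow-invariant graph above $M$ (here one uses \cite{ArBi,Ber} to know the leaves are graphs). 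In particular $\mathcal L$ is compact (a Lipschitz graph over compact $M$), $\mathrm{supp}\,\mu\subset\mathcal L$ is compact, and $H$ is constant on $\mathcal L$ because $dH_{|\mathcal L}=(\iota_{X_H}\om)_{|\mathcal L}=0$; so Oseledets' theorem and Ruelle's inequality apply.

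The heart of the proof is the claim that $G_-(x)=G_+(x)$ for $\mu$-a.e.\ $x$. At every point $y$ where the Lipschitz graph $\mathcal L$ is differentiable --- a set of full Lebesgue measure on $\mathcal L$ by Rademacher --- the tangent space $T_y\mathcal L$ is an $n$-dimensional invariant Lagrangian subspace transverse to the vertical, so the extremality of the Green bundles among such subbundles gives $G_-(y)\le T_y\mathcal L\le G_+(y)$. The work is to upgrade this from Lebesgue-a.e.\ on $\mathcal L$ to $\mu$-a.e.\ (which would be false for an arbitrary Lipschitz graph). This is where one uses crucially that $\mathcal L$ is a \emph{bounded invariant Lagrangian graph}: along a $\mu$-generic, hence Poincaré-recurrent, orbit, a strict inequality $G_-<G_+$ would make the ``width'' $S_{G_+}-S_{G_-}$ grow unboundedly along the orbit, contradicting the boundedness of $\mathcal L$ in the fibres. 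This is precisely the kind of regularity statement for $C^0$-Lagrangian invariant graphs of Tonelli flows established via the Green bundle machinery of \cite{arnaud1,Cont}. I expect this step to be the main obstacle, since it is the place where the Lagrangian and graph structures are both genuinely used and where one has to deal with a measure $\mu$ that may be singular with respect to Lebesgue.

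Finally, fix a $\mu$-generic $x$, so that the Oseledets splitting $T_x(T^*M)=E^s(x)\oplus E^c(x)\oplus E^u(x)$ exists (stable, central, unstable parts) and write $G(x):=G_-(x)=G_+(x)$. A standard consequence of the definition of the Green bundles (see the appendix and \cite{arnaud1}) is that $E^s(x)\subseteq G_-(x)$ and $E^u(x)\subseteq G_+(x)$: a vector with negative forward exponent cannot satisfy $\|D(\pi\circ\phi^H_t)(x)\cdot w\|\to+\infty$, hence lies in $G_-(x)$, and symmetrically for $G_+(x)$. Thus $E^s(x)\oplus E^u(x)\subseteq G(x)$, so the symplectic form $\om$ vanishes on $E^s(x)\oplus E^u(x)$; but $D\phi^H_t$ is a symplectic cocycle, so $\om$ restricts to a nondegenerate pairing between $E^s(x)$ and $E^u(x)$. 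Therefore $E^s(x)=E^u(x)=\{0\}$, i.e.\ all Lyapunov exponents of $\mu$ are zero. Undoing the ergodic decomposition yields the theorem; Ruelle's inequality \cite{ruelle1} then gives the vanishing of the metric entropy of every invariant measure, and the variational principle (\cite{katok}) the vanishing of the topological entropy.
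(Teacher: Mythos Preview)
The deduction of the corollary from Theorem~\ref{T1} (vanishing of all Lyapunov exponents) via Ruelle's inequality and the variational principle is exactly what the paper does, and your reductions (ergodic decomposition, the measure being supported on a single leaf~$\mathcal L$) are fine. The issue is in your attempted proof of Theorem~\ref{T1} itself, specifically the claim that $G_-(x)=G_+(x)$ for $\mu$-a.e.\ $x$.

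Your heuristic for this step is not correct. You say that if $G_-<G_+$ somewhere on a recurrent orbit then ``the width $S_{G_+}-S_{G_-}$ would grow unboundedly along the orbit, contradicting the boundedness of $\mathcal L$ in the fibres.'' But the Green bundles are well-defined and uniformly bounded on any compact invariant set without conjugate points (they depend semi-continuously on the point), so no such growth occurs; and in any case the size of $S_{G_+}-S_{G_-}$ lives in the tangent bundle and has no direct relation to how far $\mathcal L$ sits inside the fibre. The sandwich $G_-\le T\mathcal L\le G_+$ at Rademacher points of $\mathcal L$ does \emph{not} by itself force $G_-=G_+$ at $\mu$-generic points, precisely because $\mu$ can be singular with respect to Lebesgue on $\mathcal L$; this is the whole difficulty, and you have not bridged it. In fact, by the theorem of \cite{arnaud2} recalled in the appendix, ``$G_-=G_+$ $\mu$-a.e.'' is \emph{equivalent} to ``all Lyapunov exponents vanish,'' so you are assuming what is to be proved.

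The paper's argument is genuinely different and supplies what is missing. It argues by contradiction: assume $d\ge 1$ positive exponents. Then by \cite{arnaud2} one has $\dim(G_-\cap G_+)=n-d$ $\mu$-a.e., and at a Pesin-regular point $\zeta$ there are $d$-dimensional local stable and unstable disks $D^s,D^u$. The key observation (your Remark~\ref{previous} argument) is that $D^s\cup D^u\subset \Gamma=\mathcal L$, because distinct leaves of the $C^0$-integrable foliation are disjoint compact sets. The paper then performs a delicate computation inside the Lipschitz graph $\Gamma$: it chooses curves $\zeta^s(t)\in D^s$, $\zeta^u(t)\in D^u$ whose base projections differ by a vector in $g:=D\pi(G_-\cap G_+)$, writes $\zeta^u(t)-\zeta^s(t)$ as an integral of $D\gamma$ along the segment joining the base points (Rademacher plus Fubini), and uses the semi-continuity of $G_\pm$ to show that $D\gamma_{|g}$ converges to $S_\pm(\zeta)_{|g}$ near $\zeta$. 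This forces the limit $\lim_{t\to0}\tfrac{1}{t}(\zeta^u(t)-\zeta^s(t))$ to lie in $G_-\cap G_+$, contradicting the fact that $v^u-v^s\notin G_-\cap G_+$ (since $T_\zeta D^s$, $T_\zeta D^u$ and $G_-\cap G_+$ are in direct sum inside $G_-+G_+$). This is where the Lipschitz-graph structure and the Green-bundle semi-continuity are actually combined, and it is substantially more involved than the step you sketched.
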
 
   \begin{remark}\label{previous}\rm   Even if the invariant foliation is not $C^1$, if all the invariant $C^0$ Lagrangian graphs are assumed to be everywhere differentiable, the result is straightforward: let us assume that some ergodic Borel probability measure has at least one non-zero Lyapunov exponent. Because the dynamics is symplectic, then the number $d\geqslant 1$ of  positive Lyapunov exponents is equal to the number of negative Lyapunov exponents.
    Then the support of this ergodic measure   is contained in some invariant differentiable $C^1$ Lagrangian graph $\Gamma$. Let $\zeta \in T^*M$ be a regular point for $\mu$. There exist two $d$-dimensional embedded open disks $D^u$ and $D^s$ that contain $\zeta $,  the first one in the unstable set of $\zeta $ and the second one in its stable set (see theorem 6.1. in \cite{ruelle2}). Moreover, $T_\zeta D^u$ and $T_\zeta D^s$ are transverse and such that $T_\zeta D^u\oplus T_\zeta D^s$ is a symplectic subspace of $T_\zeta (T^*M)$.\\
Then $D^u\cup D^s\subset \Gamma$. Indeed, let us consider $\zeta '\in D^u\cup D^s$ and let us assume that $\zeta '\notin\Gamma$. Then there exists another invariant Lagrangian graph $\Gamma_1$ such that $\zeta '\in\Gamma_1$ and $\Gamma\cap\Gamma_1=\varnothing$. If for example $\zeta '\in D^s$, we obtain:
\begin{itemize}
\item $\displaystyle{ \lim_{t\rightarrow +\infty} d\big(\phi_t^H(\zeta ), \phi^H_t(\zeta ')\big)=0}$;
\item$\forall t\in\R ,\quad     \phi_t^H(\zeta )\in\Gamma, \   \phi_t^H(\zeta ')\in\Gamma_1$.
\end{itemize}
Because $\Gamma$ and $\Gamma_1$ are compact, this is impossible. \\
We deduce that $D^u\cup D^s\subset \Gamma$ and then $T_\zeta D^u\oplus T_\zeta  D^s\subset T_\zeta \Gamma$. But $T_\zeta D^u\oplus T_\zeta  D^s$ is a $2d$-dimensional symplectic subspace and $T_\zeta \Gamma$ is Lagrangian, hence this cannot happen.
\end{remark}

When a $C^0$ Lagrangian  invariant graph  is  not assumed to be differentiable, the symplectic product of two of its tangent vectors (in a generalized sense) may be non-zero: consider what happens at the hyperbolic critical point of the Hamiltonian $H: \T\times \R\rightarrow \R$ defined by $H(\theta, r)=\frac{1}{2}r^2+\cos(2\pi\theta)$: one separatrix is a Lipschitz Lagrangian invariant graph such that the tangent cone at the critical point contains the stable and unstable tangent lines, and the symplectic product of one stable vector with an unstable one is non-zero.

\begin{remark}\rm To prove Theorem \ref{T1},    proposition \ref{P1} and subsection \ref{ss21} are useless. But we prefer  to give a particular proof in the cases of non-uniform hyperbolicity and of atomic measures because the arguments are simpler in these cases, even if they  do  not work for the general case.
\end{remark}
\subsection{Proof of theorem \ref{T1} in the case of atomic measures}
We assume that $H$ is a $C^0$ integrable Tonelli Hamiltonian of $T^*M$.  Let us prove:
\begin{proposition}\label{P1}
Let $H:T^*M\rightarrow \R$ be a $C^k$, Tonelli Hamiltonian that is $C^0$ integrable (with $k\geqslant 2$). Then the set of all critical points of $H$ is a $C^{k-1}$ Lagrangian graph.
\end{proposition}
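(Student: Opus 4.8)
The plan is to show two things: first that $\pi$ restricted to the critical set is injective (and in fact the critical set is contained in a single invariant graph), and second that this critical set is all of that graph, i.e.\ it is a full graph over $M$; the Lagrangian and $C^{k-1}$ regularity will then come essentially for free from the integrability hypothesis.

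First I would recall that by $C^0$ integrability, $T^*M$ is partitioned by invariant Lipschitz Lagrangian graphs, each Hamiltonianly isotopic to the zero section, hence (by \cite{ArBi,Ber}) genuine graphs of Lipschitz closed $1$-forms $\lambda_c$, one for each cohomology class $c \in H^1(M,\R)$, varying continuously in $c$. A critical point $(x_0,p_0)$ of $H$ is a rest point of the Hamiltonian flow, so it lies on one of these invariant graphs, say the graph of $\lambda_c$; moreover, since the graph is invariant and $(x_0,p_0)$ is fixed, its whole flow orbit is $\{(x_0,p_0)\}$. The key dynamical input is that on a $C^0$ integrable Tonelli Hamiltonian the restricted dynamics on each leaf is Lipschitz-conjugate to... actually more simply: a rest point of a Tonelli flow must be a point where $\partial_p H = 0$, i.e.\ $p_0 = \partial_v L(x_0,0)$, and then the Euler--Lagrange equation forces $\partial_x L(x_0,0)=0$; equivalently $(x_0,p_0)$ is the minimum of $H$ on the fiber $T_{x_0}^*M$ shifted appropriately. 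Using the weak KAM / Aubry set description (each leaf is an Aubry set $\Ac_c^*$, and the Mather set sits inside it), a rest point is a supported point of a Mather measure, so $c=[\lambda_c]$ must be such that $\alpha(c)=H(x_0,p_0)$ is the minimum of $\alpha$... this pins down the relevant leaf. The cleaner route: show the critical set equals $\Gc_{T,0}^* = \{(x,0)\}$-type leaf, i.e.\ the unique leaf through a rest point is a graph, and every point of it is critical.

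To get that \emph{every} point of the distinguished leaf is critical, I would argue as follows. Let $\Gamma = $ graph of $\lambda_{c_0}$ be an invariant leaf containing a critical point $z_0$. Since $\Gamma$ is compact and invariant and contains a fixed point, and since $H$ is constant on $\Gamma$ (it is an invariant Lagrangian graph, hence the $1$-form $\lambda_{c_0}$ is closed and $H\circ(\mathrm{id},\lambda_{c_0})$ is locally constant along orbits, but orbits need not be dense — here one uses instead that $\Gamma$ carries a flow that is ``weak KAM'' so $H$ restricted to $\Gamma$ equals $\alpha(c_0)$ identically). Then consider the function $x \mapsto H(x,\lambda_{c_0}(x))\equiv \alpha(c_0)$ on $M$; wherever $\lambda_{c_0}$ is differentiable, differentiating gives $\partial_x H + \partial_p H \cdot D\lambda_{c_0} = 0$. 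If additionally $\partial_p H$ vanishes at one point $x_0$ (the critical point), one wants to propagate this. The mechanism is that the Hamiltonian flow on $\Gamma$ projects to a flow on $M$ generated by $x\mapsto \partial_p H(x,\lambda_{c_0}(x))$; a zero of this vector field that is isolated would be a sink or source or saddle, contradicting the measure-preserving (volume-preserving on $M$ via the leaf's invariant measure) nature — but the honest statement we want is that this projected vector field is identically zero, which holds iff $\Gamma$ consists of critical points. So I would show: if the projected field is not identically zero, the leaf $\Gamma$ would contain a nontrivial orbit, whose $\omega$-limit (a compact invariant subset of $\Gamma$) still lies in $\Gamma$; combining with Theorem~\ref{T1} (zero Lyapunov exponents, hence no hyperbolic rest points) and the structure of Aubry sets, one derives that $\Gamma$ cannot simultaneously contain a rest point and a nontrivial orbit unless... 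Honestly the slickest argument: the set of critical points is closed and invariant; I claim it is also \emph{open in $\Gamma$}. Near a critical point $z_0\in\Gamma$, linearize: $D\phi_t^H(z_0)$ has, by Theorem~\ref{T1}, all eigenvalues on the unit circle (zero Lyapunov exponents) and is symplectic, and $\Gamma$ being Lagrangian and invariant, $T_{z_0}\Gamma$ (tangent cone) is an invariant Lagrangian subspace; a Lipschitz graph with a rest point whose linearization is elliptic/parabolic forces a neighborhood of $z_0$ in $\Gamma$ to be rest points — this is where I expect the real work, controlling the Lipschitz (non-$C^1$) leaf near a degenerate rest point, probably via the normal form ideas of Section~2 (Proposition~\ref{NF}) specialized to the rest point, showing the nearby dynamics on the leaf is a translation flow with frozen speed, forcing speed zero.

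The main obstacle is exactly this local step: without $C^1$ regularity of the leaf, ``the tangent space is Lagrangian'' must be replaced by tangent-cone arguments (cf.\ the separatrix example in the preceding Remark, which shows the subtlety is genuine), and one must rule out the pendulum-type picture globally using that the \emph{whole} leaf is a graph isotopic to the zero section (so it cannot contain a homoclinic loop bounding area) together with the zero-entropy / zero-Lyapunov-exponent conclusion of Theorem~\ref{T1}. Once the critical set is shown to be open and closed in a full graph leaf $\Gamma$ and nonempty, it equals $\Gamma$; then $\Gamma$ is a Lagrangian graph, and its $C^{k-1}$ regularity follows because $\Gamma = \{(x,p): \partial_p H(x,p)=0\}$ locally is the graph of an implicit function — $\partial_p H(x,\cdot)$ is a $C^{k-1}$ diffeomorphism of each fiber (strict convexity of $H$ in $p$, inherited from the Tonelli condition), so $p = p(x)$ is the unique fiber-critical point and depends $C^{k-1}$ on $x$. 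That last regularity step is short and I would present it first as a standalone observation, so the bulk of the proof is the topological-dynamical argument that the critical set is a full leaf.
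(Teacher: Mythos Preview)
Your proposal has a genuine gap at the crucial step, and misses the paper's short, direct argument entirely.

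The paper's proof is organized around the set $\Sigma=\{(x,p):\partial_pH(x,p)=0\}$. By strict fiberwise convexity and the implicit function theorem, $\Sigma$ is immediately a $C^{k-1}$ graph over $M$; this is the easy regularity step you correctly identified. The substantive point is to show every $\zeta\in\Sigma$ is an honest critical point of $H$, i.e.\ that $\partial_xH(\zeta)=0$ too. The paper does this with the Green bundles: at $\zeta\in\Sigma$ one has $X_H(\zeta)=(0,-\partial_xH(\zeta))$, a vertical vector; but $C^0$ integrability implies no conjugate points, so the Green bundles $G_\pm(\zeta)$ exist, are transverse to the vertical, and always contain $X_H(\zeta)$ (this is a general fact from \cite{arnaud1}). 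Hence $X_H(\zeta)=0$. Once $\Sigma$ consists of fixed points, $D\phi^H_t$ fixes $T_\zeta\Sigma$ pointwise, and the same dynamical criterion forces $T_\zeta\Sigma=G_-(\zeta)=G_+(\zeta)$, which is Lagrangian. That is the entire proof.

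Your route never invokes the Green bundles and instead tries to show that the critical set is open and closed inside a leaf $\Gamma$. The ``open'' step is where your argument breaks. Zero Lyapunov exponents (eigenvalues of the linearization on the unit circle) do \emph{not} force a neighborhood of a rest point in an invariant Lipschitz Lagrangian graph to consist of rest points: a parabolic or elliptic fixed point can perfectly well be isolated inside an invariant Lagrangian torus. Your sketch acknowledges this difficulty (``this is where I expect the real work''), trails off with ``unless\ldots'', and proposes to use normal forms from Section~2 and tangent-cone arguments --- none of which you actually carry out, and none of which straightforwardly give local rigidity of rest points on a merely Lipschitz leaf. Invoking Theorem~\ref{T1} is also backwards in spirit (it is the main theorem of the section, proved by a delicate argument; the paper explicitly notes that P1 is a warm-up case, not an ingredient), and in any case zero Lyapunov exponents alone do not close the gap.

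In short: the missing idea is that $X_H$ lies in the Green bundles, which are transverse to the vertical; this single observation replaces the whole topological-dynamical programme you outline.
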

\begin{remark}\rm
Proposition \ref{P1} is very similar to what was noted in remark \ref{0section}. However, for completeness, we will recall the main arguments in this setting.
\end{remark}

\begin{corollary}\label{C3}
The Lyapunov exponents of every invariant measure supported by a critical point are zero. 
\end{corollary}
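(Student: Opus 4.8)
The plan is to use Proposition \ref{P1} to place the critical point on a \emph{differentiable} invariant Lagrangian graph, and then to observe that the linearized flow at such a point is forced to be nilpotent. First I would unwind the hypothesis: if $\mu$ is an invariant Borel probability measure supported by a critical point $z_0$ of $H$, then $\mu=\delta_{z_0}$. Since $X_H$ vanishes precisely at the critical points of $H$ (as $\iota_{X_H}\omega=-dH$ and $\omega$ is non-degenerate), $z_0$ is a fixed point of the Hamiltonian flow, the variational equation at $z_0$ is autonomous, and $D\phi^H_t(z_0)=\exp(tL)$ with $L:=DX_H(z_0)$. The maps $D\phi^H_t(z_0)$ being symplectic, $L$ is an infinitesimally symplectic endomorphism of $E:=T_{z_0}(T^*M)$, and the Lyapunov exponents of $\mu$ are exactly the real parts of the eigenvalues of $L$. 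So it suffices to prove that $0$ is the only eigenvalue of $L$.

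The key input is Proposition \ref{P1}: the set $\mathcal{C}$ of critical points of $H$ is a $C^{k-1}$ Lagrangian submanifold of $T^*M$ of dimension $n=\dim M$, and $z_0\in\mathcal{C}$. Every point of $\mathcal{C}$ being a critical point of $H$, the vector field $X_H$ vanishes identically along $\mathcal{C}$; differentiating this identity at $z_0$ shows that $L$ vanishes on $\Lambda:=T_{z_0}\mathcal{C}$, which is a Lagrangian subspace of $E$. The reason Proposition \ref{P1} is invoked here is precisely that it supplies this differentiability: a general leaf of the $C^0$ integrating foliation need not be differentiable at $z_0$, and the symplectic pairing of (generalized) tangent vectors to a merely Lipschitz invariant graph may fail to vanish — as illustrated by the separatrix of $H(\theta,r)=\tfrac12 r^2+\cos(2\pi\theta)$ discussed above. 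Here, by contrast, $z_0$ sits on the honest $C^1$ Lagrangian graph $\mathcal{C}$.

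Finally I would record the elementary fact of symplectic linear algebra that an infinitesimally symplectic endomorphism $L$ of a symplectic vector space $(E,\omega)$ vanishing on a Lagrangian subspace $\Lambda$ satisfies $L^2=0$: in a symplectic basis of $E$ adapted to $\Lambda$ (so that $\Lambda$ is spanned by the first $n$ vectors) the condition $L(\Lambda)=0$ kills the first $n$ columns of $L$, and the condition that $(u,v)\mapsto\omega(Lu,v)$ be symmetric then forces $L$ into the strictly block-upper-triangular form $\begin{pmatrix}0 & A\\ 0 & 0\end{pmatrix}$ with $A$ symmetric, whence $L^2=0$. Applying this to $L=DX_H(z_0)$ and $\Lambda=T_{z_0}\mathcal{C}$ gives that $L$ is nilpotent, so all its eigenvalues vanish and hence all the Lyapunov exponents of $\mu$ are zero. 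I do not expect a genuine obstacle: the entire content of the corollary is Proposition \ref{P1} (itself a mild variant of Remark \ref{0section}), and everything downstream is this short linear computation, in the same spirit as the mechanism of Remark \ref{previous}.
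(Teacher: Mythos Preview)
Your proof is correct and follows essentially the same route as the paper: both use Proposition~\ref{P1} to put the critical point on a $C^1$ Lagrangian submanifold $\Sigma$ on which the flow is the identity, and then extract from this (together with symplecticity) the block upper-triangular form of the linearization. The only cosmetic difference is that you work infinitesimally with $L=DX_H(z_0)$ and conclude $L^2=0$, whereas the paper writes $D\phi^H_t(\zeta)=\begin{pmatrix} I_n & B(t)\\ 0 & I_n\end{pmatrix}$ directly and reads off the polynomial growth of the iterates.
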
 
 \begin{proof}[Proof of proposition \ref{P1} and corollary \ref{C3}] As $H$ is $C^2$, convex and superlinear in the fiber direction, for every $x\in M$ there exists a unique $p\in T_x^*M$ such that $(x, p)$ is a critical point of $H(x, .)$, i.e. such that $\partial_p H(x, p)=0$ (we write this equation in charts). Because the Hessian $\partial ^2_{p}H$ is non-degenerate, we can use the implicit function theorem to deduce that $\Sigma=\{ (x, p), \ \  \partial_p H(x, p)=0\}$ is the graph of a $C^{k-1}$-function.
 
 Let us now prove that all the points of $\Sigma$ are critical points of $H$. If not, let $\zeta \in\Sigma$ such that $X_H(\zeta )\not=0$. Then $X_H(\zeta )=\big(0, -\partial_x H(\zeta )\big)$ is a non-vanishing vertical vector. But in the section 3.5. of \cite{arnaud1}, it is proved that $X_H(\zeta )$ is contained in the two Green bundles at $\zeta $, that are transverse to the vertical. Hence $X_H(\zeta )=0$ and $\zeta $ is a critical point of $H$. 
 
 Let us notice that $(\phi^H_{t|\Sigma})$ is just the  identity map. Hence if $v\in T_\zeta \Sigma$ is any vector tangent to $\Sigma$, its orbit $(D\phi^H_t.v)$ is constant and then by the dynamical criterion  given in 3.5 of \cite{arnaud1}, is in the two Green bundles $G_-(\zeta )$ and $G_+(\zeta )$. As $T_\zeta \Sigma$, $G_-(\zeta )$ and $G_+(\zeta )$ have the same dimension, we deduce that $T_\zeta \Sigma=G_-(\zeta )=G_+(\zeta )$ is Lagrangian and then $\Sigma$ is Lagrangian. 
 
 In a symplectic  linear chart where we choose the first coordinates in $T_\zeta \Sigma$, the matrix of $D\phi^H_t(\zeta )$ is a symplectic matrix $A_t=\begin{pmatrix} I_n&B(t)\\ 0_n&D(t)\end{pmatrix}$. Because it is symplectic, we have: $D(t)=I_n$ and $A_t^N=\begin{pmatrix} I_n&NB(t)\\ 0& I_n\end{pmatrix}$. This implies that all the Lyapunov exponents of the atomic measure supported at $\zeta $ are zero.

 \end{proof}

Let us now assume that $\mu$ is an invariant ergodic measure with at least one non-vanishing Lyapunov exponent the support of which is contained in a certain $C^0$ Lagrangian invariant graph $\Gamma$. By proposition \ref{P1}, the support of $\mu$ contains no critical points of $H$. As noticed in the previous remark \ref{previous}, we can choose a point $\zeta $ in the support of $\mu$ and two embedded disks $D^u$ and $D^s$ containing $\zeta $, the first one in the unstable set of $\zeta $, the second one in the stable set of $\zeta$. Then $D^u\cup D^s\subset \Gamma$.

\subsection{Proof of Theorem \ref{T1} in the   non-uniformly hyperbolic case}\label{ss21}
Let us assume that there are exactly $2(n-1)$ non vanishing exponents (there is always 2 vanishing Lyapunov exponents, one in the flow direction and the other  one in the energy direction). Let us define the weak local stable and unstable manifolds $\displaystyle{ W^s(\zeta )=\bigcup_{t\in(-\varepsilon, \varepsilon)}\phi^H_t(D^s)}$ and $\displaystyle{W^u(\zeta )=\bigcup_{t {\in} (-\varepsilon, \varepsilon)}\phi^H_t(D^u)}$. Then they are $n$-dimensional $C^{k-1}$ submanifolds that are contained in $\Gamma$ and such that $W^u(\zeta )\cap W^s(\zeta )=\{ \phi^H_t(\zeta ); x\in (-\varepsilon, \varepsilon)\}$. This cannot happen in the $n$-dimensional (topological) manifold $\Gamma$.\\

Hence in this case the proof just uses simple topological arguments. But we cannot use the same strategy when there are fewer non-zero Lyapunov exponents.

\subsection{Proof of Theorem \ref{T1} in the   general  case}\label{ss2}
We introduce the notation: $d=\dim(D^u)=\dim (D^s)$. We deduce from theorem 2 of \cite{arnaud2} that is recalled in the appendix too that at $\mu$ almost every point, we have: $\dim\big(G_-(\zeta )\cap G_+(\zeta )\big)= n-d$, hence we can assume this equality for the chosen $\zeta $.

Because $G_-(\zeta )$ and $G_+(\zeta )$ are $n$-dimensional and transverse to the vertical, their image by $D\pi(\zeta )$ is $T_xM$. Let us now fix a chart $U$ of $M$ at $x=\pi(\zeta )$. We use the notations: $g(x)=D\pi\big(G_-(\zeta )\cap G_+(\zeta )\big)$. Identifying $U$ with a part of $\R^n$ via the chart, we say that $g$ defines an affine $(n-d)$-dimensional foliation $\Gc$ of $U$.

 As $G_-(\zeta )=T_\zeta D^s\oplus \big(G_-(\zeta )\cap G_+(\zeta )\big)$ and $G_+(\zeta )=T_\zeta D^u\oplus \big(G_-(\zeta )\cap G_+(\zeta )\big)$ (see \cite{arnaud2}), the leaves of the foliation $\Gc$ are transverse to $d^u=\pi(D^u)$ and $d^s=\pi(D^s)$ if $U$ is small enough.
 
 Let us now choose a non-zero vector $v^s\in T_\zeta D^s\backslash \{0\}$ and let us use the notation: $w^s=D\pi(\zeta )\cdot v^s$. Because  $G_+(\zeta )=T_\zeta D^u\oplus \big(G_-(\zeta )\cap G_+(\zeta )\big)$, we have $T_xM=D\pi(\zeta )(T_\zeta D^u)\oplus g(x)$. Hence there exists $v^u\in T_\zeta D^u$ and $v\in g(x)$ such that $w^s=w^u+v$ if $w^u=D\pi(\zeta )\cdot v^u$. Using the definition of $T_xd^u$, $T_xd^s$ and the transversality of the foliation $\Gc$ to $d^u$ and $d^s$, we deduce the existence of $\varepsilon >0$ and for every $t\in [0,\varepsilon]$ of: $x^s(t)=x+tw^s+o(t)\in d^s$, $x^u(t)=x+tw^u+o(t)\in d^u$ such that $x^s(t)-x^u(t)\in g$.
 
 Then we use the notation: $\zeta ^u(t)=\pi_{|\Gamma}^{-1}\big(x^u(t)\big)$  and $\zeta ^s(t)=\pi_{|\Gamma}^{-1}\big(x^s(t)\big)$. Because $\Gamma$ is a Lipschitz graph containing $D^u$ and $D^s$, we deduce that we have  in chart: 
 $$\zeta ^u(t)=tv^u +o(t)\in D^u\quad {\rm and} \quad  \zeta ^s(t)=tv^s+o(t).$$
   Let us recall (see \cite{arnaud2}) that $G_-(\zeta )+G_+(\zeta )=\big(G_-(\zeta )\cap G_+(\zeta )\big)\oplus T_\zeta D^u \oplus T_\zeta  D^s$. As $v^s\not=0$, we have then: $v^u-v^s\notin G_-(\zeta )\cap G_+(\zeta )$. Hence (in chart): 
 $$(*)\lim_{t\rightarrow 0} \frac{1}{t}\big(\zeta ^u(t)-\zeta ^s(t)\big)=v^u-v^s\notin G_-(\zeta )\cap G_+(\zeta ).$$
 
 The submanifold $\Gamma$ is the graph of a Lipschitz map $\gamma: U\rightarrow \R^n$. By Rademacher theorem, $\gamma$ is Lebesgue almost everywhere differentiable. Let us assume that $\gamma$ is differentiable almost everywhere along the segment $[\pi\circ \zeta ^u(t), \pi\circ \zeta ^s(t)]$ for a while.
 Then  $\zeta ^u(t)-\zeta ^s(t)=\Big(x^u(t)-x^s(t)\, ,\,  \gamma\big(x^u(t)\big)-\gamma(x^s(t)\big)\Big)$ is equal to: 
 $$(**)\quad \zeta ^u(t)-\zeta ^s(t)=\int_0^1 \Big(x^u(t)-x^s(t)\, ,\,  D\gamma\Big(x^s(t)+\sigma(x^u(t)-x^s(t)\big)\Big).\big(x^u(t)-x^s(t)\big)\Big)d\sigma.$$
 Proposition 4.3. of \cite{arnaud1} states that:
 $$\begin{matrix} G_-\circ\pi_{|\Gamma}^{-1}\Big(x^s(t)+\sigma\big(x^u(t)-x^s(t)\big)\Big)&\leqslant T_{\pi_{|\Gamma}^{-1}\big(\textrm{$x^s(t)+\sigma\big(x^u(t)-x^s(t)\big)$}\big)}\Gamma\hfill\\
& \leqslant G_+\circ\pi_{|\Gamma}^{-1}\Big(x^s(t)+\sigma\big(x^u(t)-x^s(t)\big)\Big),\end{matrix}$$
 i.e. that the tangent Lagrangian subspace to $\Gamma$ is between the two Green bundles at the points where $\gamma$ is differentiable. Moreover, $G_-\leqslant G_+$, $G_-$ is lower semicontinuous and $G_+$ is upper semicontinuous (see \cite{arnaud1}). 
 
 We introduce the following notations: $G_\pm(\eta)$ is the graph of the symmetric matrix $S_\pm(\eta)$. Because of the semicontinuity of $G_-$ and $G_+$, there exist $\Delta S_\pm(\eta)$ a semi-positive   matrix that depends on $\eta$, vanishes at $\zeta $ and is continuous at $\zeta $ such that:
 $S_+(\eta)\leqslant S_+(\zeta )+\Delta S_+(\eta)$ and $S_-(\zeta )-\Delta S_-(\eta)\leqslant S_-(\eta)$. We obtain then at every point $\eta$ where $\Gamma$ is differentiable:
 $$S_-(\zeta )-\Delta S_-(\eta)\leqslant S_-(\eta)\leqslant D\gamma \big(\pi(\eta)\big)\leqslant S_+(\eta)\leqslant S_+(\zeta )+\Delta S_+(\eta).$$
 Let us consider the restrictions (as quadratic forms) of the previous matrices to $g(x)$ and let us denote them with a ``$\widetilde{\quad}$''. We have then    $\widetilde S_+(\zeta )=\widetilde S_-(\zeta )$ and: 
$$\widetilde S_-(\zeta )-\Delta \widetilde S_-(\eta)\leqslant \widetilde S_-(\eta)\leqslant \widetilde {D\gamma \big(\pi(\eta)\big)}\leqslant\widetilde S_+(\eta)\leqslant \widetilde S_+(\zeta )+\Delta \widetilde S_+(\eta).$$
Moreover, $\displaystyle{\lim_{\eta\rightarrow \zeta }\Delta\widetilde S_\pm(\eta)=0}$. This implies that:
$$(***)\hfill\lim_{x'\rightarrow x} \widetilde{D\gamma(x')}=\widetilde S_\pm(\zeta ).$$
Let us prove that this implies that 
$$(****)\quad\displaystyle{\lim_{\eta\rightarrow \zeta } D\gamma\big(\pi(\eta)\big)_{|g}=S_\pm(\zeta )_{|g}}.$$
 The map $\gamma$ being Lipschitz, the $D\gamma(x')$ are uniformly bounded. Hence, if $(****)$ is not true, we can find a sequence $(x_k)$ converging to $x$ such that $\big(D\gamma(x_k)\big)_{k\in \N}$ converges and $\displaystyle{\lim_{k\rightarrow \infty}   {D\gamma(x_k)}_{|g}\not=\  S_+(\zeta )_{|g}}.$  As $D\gamma(x_k)\leqslant S_+(x_k)$ and as $S_+$ is upper semicontinuous, we know that $\displaystyle{   S_+(\zeta )-\lim_{k\rightarrow \infty}   D\gamma(x_k)}$ is positive semi-definite, hence its kernel coincides with its isotropic cone. And we have proved in $(***)$ that $g$ is in this isotropic cone. This implies $\displaystyle{\lim_{k\rightarrow \infty}   {D\gamma(x_k)}_{|g} =\  S_+(\zeta )_{|g}}$ and gives a contradiction. 
 
 Hence if $\eta$ is close to $\zeta $ and such that $\gamma$ is differentiable at $\eta$: $D\gamma\big(\pi(\eta)\big)_{|g}=S+\alpha (\eta)$ where $\displaystyle{\lim_{\eta\rightarrow \zeta }\alpha(\eta)=0}$ and $G_-(\zeta )\cap G_+(\zeta )$ is the graph of $S$ above $g$. Replacing in $(**)$, we obtain: $\zeta ^u(t)-\zeta ^s(t)=$
 $$\Big( x^u(t)-x^s(t)\, ,\, \int_0^1\Big[S+\alpha\Big(x^s(t)+\sigma\big(x^u(t)-x^s(t)\big)\Big)\Big]d\sigma \cdot \big(x^u(t)-x^s(t)\big)\Big)$$
 i.e:
 $$\zeta ^u(t)-\zeta ^s(t)= \big(t(w^u-w^s)+o(t)\, ,\,  tS(w^u-w^s)+o(t)\big)$$
  and we deduce:
 $$\lim_{t\rightarrow 0} \frac{1}{t}\big(\zeta ^u(t)-\zeta ^s(t)\big)=\big(w^u-w^s\, ,\,  S(w^u-w^s)\big)\in G_-(\zeta )\cap G_+(\zeta ),$$
 which contradicts $(*)$.
 
 If $\gamma$ is not differentiable almost everywhere along the segment $[\zeta ^u(t), \zeta ^s(t)]$, using Fubini theorem, we can replace $\zeta ^u(t)$ and $\zeta ^s(t)$ by $\eta^u(t)=\zeta ^s(t)+o(t)$ and $\eta^s(t)=\zeta ^s(t)+o(t)$ in such a way that $\gamma$ is  differentiable almost everywhere along the segment $[\eta^u(t), \eta^s(t)]$, and we use the same argument as before to conclude.
 
 \section*{Appendix}
\subsubsection*{Aubry sets}
 If $\lambda$ is a $C^\infty$ closed 1-form of $\T^n$, then the map
$T_\lambda~: T^*\T^n\rightarrow T^*\T^n$ defined by~: $T_\lambda (q, p)=(q,
p+\lambda (q))$ is a symplectic  $C^\infty$  diffeomorphism; therefore,
we have~: $(\phi^{H\circ T_\lambda}_t)=(T_\lambda^{-1}\circ \phi_t\circ
T_\lambda )$, i.e.  the Hamiltonian flow of $H$ and $H\circ T_\lambda$ are
conjugated. Moreover, the Tonelli Hamiltonian function $H\circ T_\lambda$ is
associated to the Tonelli Lagrangian function $L-\lambda$, and it is
well-known that~: $(\varphi_t^L)=(\varphi_t^{L-\lambda})$; the two Euler-Lagrange
flows are equal. Let us emphasize that these flows are equal, but the
Lagrangian functions, and then the Lagrangian actions differ.

For   a Tonelli Lagrangian function ($L$ or $L-\lambda$), J.~Mather introduced in \cite{Ma1}
(see  \cite{M2} too) a particular subset $\Ac(L-\lambda)$ of $T\T^n$
which he called the ``static set'' and which is now usually called the
``{\em Aubry set}''. There exist  different but equivalent definitions of
this set (see
\cite{CIPP} ,  
\cite{Fathi},
\cite{M2}) and it is known that two closed 1-forms
that are in  the same cohomological class define the same Aubry set:
$$[\lambda_1]=[\lambda_2]\in H^1(\T^n,\R)\Rightarrow \Ac (L-\lambda_1)=\Ac
(L-\lambda_2).$$
We can then   introduce the following notation: if $c\in H^1(\T^n,\R)$ is
a cohomological class, we have
$\Ac_c=\Ac_c(L)=\Ac (L-\lambda)$  where $\lambda$ is any closed 1-form belonging
to $c$. Then $\Ac_c$ is compact, non empty and invariant under $(\varphi_t^L)$.
Moreover, J.~Mather proved in \cite{Ma1} that it is  a Lipschitz graph  above a part of the
zero-section (see
\cite{Fathi}   too). 

As we are  interested in the Hamiltonian dynamics as well as in the
Lagrangian ones, let us  define the dual Aubry set:

\begin{enumerate}
\item[--] if $H$ is the Hamiltonian function associated to the Tonelli
Lagrangian function $L$, its {\em dual Aubry set} is $\Ac^*(H)=\Lc
\big(\Ac (L)\big)$; 
\item[--] if $c\in H^1(\T^n,\R)$ is a cohomological class, then
$\Ac^*_c=\Ac^*_c(H)=\Lc\big(\Ac_c(L)\big)$ is the {\em $c$-dual Aubry set}; let us
notice that for any closed 1-form
$\lambda$ belonging to $c$, we have: $T_\lambda(
\Ac^*\big(H\circ T_\lambda)\big)=\Ac_c^*(H)$.
\end{enumerate}
These sets are invariant by the Hamiltonian flow $(\phi_t^H)$.

Then  there
exists a real number denoted by
$\alpha_H (c)$ such that~: $\Ac^*_c\subset H^{-1} \big(\alpha_H (c)\big)$ (see
\cite{Car} and \cite{Ma}), i.e. each dual Aubry set is contained in an energy level.
\\
The following property is a well-known characterization of the projected Aubry set: \\
  $x_0\in\T^n$ is such that there exists a sequence of absolutely continuous curves $\gamma_k:[0, T_k]\rightarrow \T^n$, with $(T_k) \to \infty$,  such that $\gamma_k(0)=\gamma_k(T_k)=x_0$ and $$\displaystyle{\lim_{k\rightarrow +\infty}\int_0^{T_k}\big(L(\gamma_k, \gamma'_k)-\lambda(\gamma'_k)+\alpha_H(c)\big)=0},$$ 
  if and only if  $x_0\in \pi(\Ac_c)$.

The following proposition is proved in \cite{arnaud4}:
\begin{propo}\label{Pradial}  Let $c\in H^1(\T^n, \R)$ and $\lambda\in c$, $\varepsilon >0$  and let
$L: T\T^n\rightarrow
\R
$ be a Tonelli Lagrangian function. Then there exists $T_0>0$ such
that: \\
$\forall T\geqslant T_0, \forall (x_0,v_0)\in \Ac_c, \forall \gamma: [0, T]\rightarrow \T^n$ minimizing for $L-\lambda$ between $x_0$ and $x_0$, i.e.:\\
 \begin{multline*}
 \forall \eta: [0, T]\rightarrow \T^n, \\\quad \eta (0)=\eta (T)=x_0
 \Rightarrow  \int_0^T\big(L(\gamma, \gamma')-\lambda (\gamma')+\alpha_H(c)\big)\leqslant  \int_0^T\big(L(\eta, \eta')-\lambda (\eta')+\alpha_H(c)\big)
\end{multline*}
 \\
 then we have: $d\big((x_0, v_0), (x_0, \gamma'(0))\big)\leqslant \varepsilon$
\end{propo}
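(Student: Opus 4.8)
The plan is to argue by contradiction, combining an a priori compactness estimate for minimizing loops with the description of the Aubry set by calibrated curves; the argument does not use the absence of conjugate points. Suppose the conclusion fails: there are $\e>0$, a sequence $T_k\to+\infty$, points $(x_k,v_k)\in\Ac_c$, and for each $k$ a curve $\g_k:[0,T_k]\to\T^n$ minimizing $\int(L-\l)$ among loops of length $T_k$ based at $x_k$, with $d\big((x_k,v_k),(x_k,\g_k'(0))\big)>\e$. Fix a critical subsolution $u:\T^n\to\R$ for the class $c$, so that $u(\g(b))-u(\g(a))\leqslant\int_a^b\big(L(\g,\g')-\l(\g')+\alpha_H(c)\big)\,dt$ for every absolutely continuous $\g$, with equality exactly along $u$-calibrated curves, and recall that $u$ is differentiable at every point of $\pi(\Ac_c)$ and is calibrated along every orbit contained in $\Ac_c$ (see \cite{Fathi}). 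Since $x_k\in\pi(\Ac_c)$ and $T_k\to+\infty$, the minimal $(L-\l+\alpha_H(c))$-action of a loop of length $T_k$ based at $x_k$ tends to $0$ (vanishing of the Peierls barrier on the projected Aubry set together with the fact that it is attained asymptotically, see \cite{Fathi}, \cite{Ma1}), and since $\pi(\Ac_c)$ is compact this holds uniformly in the base point; hence $I_k:=\int_0^{T_k}\big(L(\g_k,\g_k')-\l(\g_k')+\alpha_H(c)\big)\,dt\to 0$.

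First I would bound the initial conditions. Estimating the action on $[1,T_k]$ from below by $u(\g_k(T_k))-u(\g_k(1))=u(x_k)-u(\g_k(1))\geqslant-2\|u\|_\infty$ shows $\int_0^1\big(L(\g_k,\g_k')-\l(\g_k')+\alpha_H(c)\big)\,dt\leqslant I_k+2\|u\|_\infty$ is bounded, hence by superlinearity of $L$ so is $A_L(\g_k|_{[0,1]})$; the a priori bound established in the course of proving Proposition~\ref{inj}, applied on $[0,1]$, then places the extremal arc $\g_k|_{[0,1]}$, in particular $(x_k,\g_k'(0))$, in a fixed compact subset of $T\T^n$. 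After passing to a subsequence, $(x_k,v_k)\to(x_\infty,v_\infty)\in\Ac_c$ and $(x_k,\g_k'(0))\to(x_\infty,w_\infty)$ with $d\big((x_\infty,v_\infty),(x_\infty,w_\infty)\big)\geqslant\e$; since on $[0,T_k)$ the curve $\g_k$ is the orbit of $(x_k,\g_k'(0))$ under the Euler--Lagrange flow $\varphi_t$, continuous dependence on initial data gives $\g_k\to\g_\infty$ in $C^\infty$ on compact subsets of $[0,+\infty)$, where $\g_\infty(t)=\pi\circ\varphi_t(x_\infty,w_\infty)$, so $\g_\infty(0)=x_\infty$ and $\g_\infty'(0)=w_\infty$.

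Next I would show that $\g_\infty$ is $u$-calibrated on every interval $[a,b]\subset[0,+\infty)$. Splitting $I_k$ into the actions over $[0,a]$, $[a,b]$ and $[b,T_k]$ and estimating the first and last from below by the subsolution inequality (using $\g_k(0)=\g_k(T_k)=x_k$) gives $\int_a^b\big(L(\g_k,\g_k')-\l(\g_k')+\alpha_H(c)\big)\,dt\leqslant I_k-u(\g_k(a))+u(\g_k(b))$; letting $k\to+\infty$ and using $I_k\to0$, continuity of $u$ and $C^1$-convergence $\g_k\to\g_\infty$ on $[a,b]$ yields $\int_a^b\big(L(\g_\infty,\g_\infty')-\l(\g_\infty')+\alpha_H(c)\big)\,dt\leqslant u(\g_\infty(b))-u(\g_\infty(a))$, which together with the opposite subsolution inequality is an equality. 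Thus $\g_\infty$ is a $u$-calibrated curve issuing from $x_\infty\in\pi(\Ac_c)$.

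To conclude, differentiate the calibration identity $u(\g_\infty(t))-u(x_\infty)=\int_0^t\big(L(\g_\infty,\g_\infty')-\l(\g_\infty')+\alpha_H(c)\big)\,ds$ at $t=0^+$ and compare with the subsolution inequality at $x_\infty$ (where $u$ is differentiable): this forces $\g_\infty'(0)$ to minimise $w\mapsto\big(L(x_\infty,w)-\l(x_\infty)(w)+\alpha_H(c)\big)-du(x_\infty)(w)$, i.e. $\partial_v L(x_\infty,w_\infty)=du(x_\infty)+\l(x_\infty)$. The same applied to the orbit of $(x_\infty,v_\infty)$, which lies in $\Ac_c$ and hence is $u$-calibrated, gives $\partial_v L(x_\infty,v_\infty)=du(x_\infty)+\l(x_\infty)$, and strict fibrewise convexity of $L$ (injectivity of the Legendre transform on the fibre) yields $w_\infty=v_\infty$, contradicting $d\big((x_\infty,v_\infty),(x_\infty,w_\infty)\big)\geqslant\e$. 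The delicate point is the first step, the uniform convergence $I_k\to0$: this is exactly where nontrivial weak KAM information enters (vanishing of the Peierls barrier on the projected Aubry set, its asymptotic attainment by minimizing loops, and uniformity in the base point through compactness of $\pi(\Ac_c)$); once it is granted, the remainder is the routine compactness and lower semicontinuity machinery together with the differentiability and calibration properties of critical subsolutions along the projected Aubry set.
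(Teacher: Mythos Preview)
The paper does not actually prove this proposition; it is quoted in the appendix with the remark ``The following proposition is proved in \cite{arnaud4}''.  So there is no in-paper argument to compare against, and I can only comment on the soundness of your proof.

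Your argument is correct and follows the natural weak~KAM route: contradiction, a~priori compactness on $[0,1]$ via the subsolution inequality, extraction of a limiting $u$-calibrated half-orbit $\gamma_\infty$ issuing from $x_\infty\in\pi(\Ac_c)$, and identification of $\gamma_\infty'(0)$ with $v_\infty$ through the differentiability of critical subsolutions on $\pi(\Ac_c)$ together with the equality case of the Fenchel inequality.  Each step is standard and correctly invoked.

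One point deserves a sharper formulation.  You write that $I_k\to 0$ follows from ``vanishing of the Peierls barrier on the projected Aubry set together with the fact that it is attained asymptotically'', and then upgrade to uniformity in the moving base point $x_k$ ``through compactness of $\pi(\Ac_c)$''.  The Peierls barrier is only defined as a $\liminf$, so getting the honest limit $h^{T}(x,x)\to 0$ for $x\in\pi(\Ac_c)$ is a theorem, not a definition; and pointwise convergence on a compact set does not by itself yield uniformity.  What you are really using is the stronger (and standard) fact that for autonomous Tonelli Lagrangians the finite-time potentials $h^{T}$ converge \emph{uniformly} on $M\times M$ to the Peierls barrier $h$ as $T\to\infty$; this immediately gives $I_k=h^{T_k}(x_k,x_k)\to h(x_\infty,x_\infty)=0$.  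The conclusion is right, but the justification should point to that uniform convergence rather than to compactness alone.  With that clarification, the proof is complete.
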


\subsubsection*{Mather sets}

The general references for this section are \cite{Ma} and \cite{Masor}.
Let ${\cal M} (L)$ be the space of compactly supported Borel probability measures that are invariant by the Euler-Lagrange flow $(\varphi_t^L)$.  To every $\mu\in {\cal M} (L)$ we  associate its average action $A_L(\mu)=\int_{T\T^n}Ld\mu$. It is proved in \cite{Ma} that for every $f\in C^1(\T^n, \R)$, we have: 
$$\int df(q).v d\mu(q,v)=0.$$Therefore we can define on $H^1(\T^n, \R)$ a linear functional $\ell(\mu)$ by: $$\ell(\mu)([\lambda])=\int\lambda (q)\cdot vd\mu (q,v)$$ (here $\lambda$ designates any closed 1-form). Then  there exists a unique element $\rho(\mu)\in H_1(\T^n, \R)$ such that:
$$\forall \lambda,\quad \int_{T\T^n}\lambda (q)\cdot vd\mu(q,v)=[\lambda]\cdot \rho(\mu).$$
The homology class  $\rho(\mu)$ is called the {\em rotation vector} of $\mu$. Then the map $\mu\in{\cal M} (L)\rightarrow \rho(\mu)\in H_1(\T^n, \R)$ is   onto.   Mather $\beta$-function $\beta: H_1(\T^n, \R)\rightarrow \R$  associates to each homology class $h\in H_1(\T^n, \R)$ the minimal value of the average action $A_L$ over the set of measures of ${\cal M}(L)$ with rotation vector $h$. We have:
$$\beta (h)=\min_{\substack{\mu\in{\cal M} (L)\\ \rho(\mu)=h}}A_L(\mu).$$
A measure $\mu\in {\cal M} (L)$ realizing such a minimum, i.e. such that $A_L(\mu)=\beta \big(\rho(\mu)\big)$ is called a {\sl minimizing measure with rotation vector} $\rho (\mu)$. The $\beta$ function is convex and superlinear,  and  its conjugate function (given by Fenchel duality) $\alpha: H^1(\T^n, \R)\rightarrow \R$ is defined  by:
$$\alpha ([\lambda]) =\max_{h\in H_1(\T^n, \R)}\big([\lambda]\cdot h-\beta (h)\big)=-\min_{\mu\in{\cal M}(L)}A_{L-\lambda}(\mu).$$
A measure $\mu\in{\cal M} (L)$ realizing the minimum of $A_{L-\lambda }$ is called a {\sl $[\lambda]$-minimizing measure}. Observe that the function $\alpha$ is exactly the same as the function $\alpha_H$  defined in the section on Aubry sets. It is convex and superlinear.\\
Being convex, Mather's $\beta$ function has a subderivative at any point $h\in H_1(\T^n, \R)$; i.e. there exists $c\in H^1(\T^n, \R)$ such that:
 $$\forall k\in H_1(\T^n, \R),\quad  \beta (h)+c\cdot (k-h)\leqslant \beta (k).$$
  We denote by $\partial \beta (h)$ the set of all the subderivatives of $\beta$ at $h$. By Fenchel duality, we have:
$c\in\partial\beta (h)\Leftrightarrow c\cdot h=\alpha (c)+\beta (h)$.\\
Then we introduce the following notations:
\begin{enumerate}
\item[$\bullet$] if $h\in H_1(\T^n, \R)$, the Mather set for the rotation vector $h$ is: 
$$\displaystyle{\Mc^h(L)=\bigcup\{ {\rm supp}(\mu);\quad  \textrm{$\mu$ is minimizing with rotation vector $h$}\}};$$
\item[$\bullet$] if $c\in H^1(\T^n , \R)$, the Mather set for the cohomology class $c$ is: 
$$\displaystyle{\Mc_c(L)=\bigcup\{ {\rm supp}(\mu);\quad  \textrm{$\mu$ is $c$-minimizing}\}};$$
\end{enumerate}
where ${\rm supp}(\mu)$ designates the support of the measure $\mu$.\\
The sets $\Mc^h(L)$ and $\Mc_c(L)$ are invariant by $\varphi_t^L$.\\
The following equivalences are proved in \cite{Masor} for any pair $(h, c)\in H_1(M, \R)\times H^1(M, \R)$: 
$$\Mc^h(L)\cap \Mc_c(L)\not=\varnothing\quad \Longleftrightarrow \quad \Mc^h(L)\subset \Mc_c(L)\quad \Longleftrightarrow \quad c\in\partial\beta (h).$$
The dual Mather set for the cohomology class $c$ is defined by: 
$\Mc_c^*(H)=\Lc\big(\Mc_c(L)\big)$. If $\cal M^*(H)$ designates the set of compactly supported Borel probability measures of $T^*M$ that are invariant by the Hamiltonian flow $(\phi^H_t)$, then the map $\Lc_*: {\cal M}(L)\rightarrow {\cal M}^*(H)$ that push forward the measures by $\Lc$ is a bijection. We denote $\Lc_*(\mu)$ by  $\mu^*$ and say that the measures are dual. We say too that $\mu^*$ is minimizing if $\mu$ is minimizing in the previous sense. \\
Moreover, the Mather set $\Mc_c^*(H)$ is a subset of the Aubry set $\Ac_c^*(H)$ and every invariant Borel probability measure the support of whose is in $\Ac_c^*(H)$ is $c$-minimizing.

\subsubsection*{Ma\~n\'e sets}
The Ma\~n\'e set  $\Nc (L)$ of $L$ is the set  of $\big(\gamma(0), \gamma'(0)\big)\in T\T^n$ such that for all segment $[a, b]\subset \R$,  $\gamma_{|[a, b]}$ is a minimizer for $L$. The {\em dual
Ma\~n\'e set} is then
$\Nc^*(H)=\Lc\big(\Nc(L)\big)$.\\
 For all $c\in H^1(\T^n, \R)$ and $\lambda\in
c$, then $\Nc_c=\Nc_c(L)=\Nc(L-\lambda)$ is independent  of the choice of
$\lambda\in c$ and  the {\em $c$-dual Ma\~n\'e set} is
$\Nc_c^*(H)=\Lc\big(\Nc_c(L)\big)=T_\lambda\big(\Nc^*(H\circ T_\lambda)\big)$. It  is
invariant under $(\phi_t^H)$, compact and non empty but is not
necessarily a graph.

For every cohomological class $c\in H^1(\T^n)$, we have the inclusion~:
$\Mc^*_c(H)\subset \Ac^*_c(H)\subset  \Nc^*_c(H)  \subset H^{-1} \big(\alpha_H (c)\big)$ (see
\cite{Car} and \cite{Ma}), i.e. each dual Ma\~n\'e set is contained in an energy level.\\
Moreover, the $\omega$ and $\alpha$-limit sets of every point of the Ma\~n\'e set $\Nc^*_c(H) $ are
contained in the Aubry set $\Ac^*_c(H)$.

\subsubsection*{The link with the weak KAM theory}
The reference for this section is \cite{Fathi}. We just recall some results that are used in the article;   a {\em $C^0$ Lagrangian} graph is the graph of $a+du:\T^n\rightarrow (\R^n)^*$ where $a\in (\R^n)^*$ and $u\in C^1(\T^n, \R)$. Then $a\in H^1(\T^n, \R)$ is the {\em cohomology class} of the graph. We have:\\
if $\Gc$ is a  Lagrangian graph with cohomology class $c$ that is invariant by $\Phi_t$, then $$\Ac^*_c\subset \Gc\subset \Nc^*_c.$$
Moreover, if $\Ac^*_c$ (resp. $\Nc^*_c$) is  a graph above the whole $\T^n$, then we have $\Ac^*_c=\Nc^*_c$ and it is a $C^0$ Lagrangian graph.\\
It is proved in \cite{Fathi2} that every $C^0$ Lagrangian graph that is invariant by a Tonelli Hamiltonian is a Lipschitz graph.

\subsubsection*{Green bundles}

Recall 
(see \cite{Cont} and \cite{arnaud1} for details) that if $s \in \R \longmapsto (x,p) = 
\phi_s^H(x_0,p_0) \in T^*\T^n$ is an orbit of the Hamiltonian flow that is free of conjugate points, one may define 
two  bundles $G_-$ and $G_+$ (called the    Green bundles) by 
$$G_+(x,p) = \lim _{t \longrightarrow +\infty} D \phi_t^H \big( \phi_{-t}^H(x,p) \big) \cdot V^*\big( \phi^H_{-t}(x,p) \big)$$
and  $$G_-(x,p) = \lim _{t \longrightarrow +\infty} D \phi_{-t}^H \big( \phi_{t}^H(x,p) \big) \cdot V^*\big( \phi^H_{t}(x,p) \big).$$
Then $G_-$ is the negative Green bundle and $G_+$ is the positive one.

\noindent
Every $G_\pm(x,p)$ is a Lagrangian subspace of $T_{(x,p)}T^*\T^n$ that is transverse to the vertical space $V^*(x,p)$, and this bundle is invariant by the Hamiltonian flow: 
$D \phi_t^H G_\pm(x,p) = G_\pm\big( \phi^H_t(x,p) \big)$ for all $t \in \R$. 

We have of the following criteria (see \cite{Cont} and \cite{arnaud1}): if $w \in T_{(x,p)}(T^*\T^n)$, 
then $$ w \notin G_+(x,p) \ \Longrightarrow \ \lim_{t \rightarrow + \infty} 
\vv D (\pi \circ \phi^H_{-t})(x,p) \cdot w \vv = + \infty,$$
$$ w \notin G_-(x,p) \ \Longrightarrow \ \lim_{t \rightarrow + \infty} 
\vv D (\pi \circ \phi^H_{t})(x,p) \cdot w \vv = + \infty,$$
\noindent
where $\vv \cdot \vv$ denotes the Euclidean norm.\\
Moreover, $G_+$ is upper semi-continuous and $G_-$ is lower semi-continuous, and we have at every point: $G_-\leqslant G_+$ (for the usual order relation on the Lagrangian subspaces that are transverse to the vertical, given by the order on symmetric matrices, see \cite{arnaud1} for details). Hence $\{ G_-=G_+\}$ is a $G_\delta$ subset of $T^*\T^n$.\\
It is proved in \cite{arnaud1} that if $G$ is any invariant Lagrangian subspace that is transverse to the vertical space (for example the tangent to some invariant Lipschitz Lagrangian graph), then we have: $G_-\leqslant G\leqslant G_+$.

There is a strong link between Oseledet's bundle and Green bundles, as explained in \cite{arnaud2}:
\begin{theo}
Let $H~: T^*\T^n\rightarrow \R$ be a Tonelli Hamiltonian and let $\mu$ be an ergodic minimizing probability measure. Then the two following assertions are equivalent:
\begin{enumerate}
\item[$\bullet$] at $\mu$ almost every point, $\dim \big(G_-(x)\cap G_+(x)\big)=p$;
\item[$\bullet$] $\mu$ has exactly $2p$ zero Lyapunov exponents, $n-p$ positive ones and $n-p$ negative ones.
\end{enumerate}
\end{theo}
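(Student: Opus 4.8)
The plan is to show, for an ergodic minimizing measure $\mu$, that $\dim\big(G_-(x)\cap G_+(x)\big)$ is $\mu$-a.e.\ equal to half the number of vanishing Lyapunov exponents; the stated equivalence (with $p$ the common value) then follows at once, since for an ergodic measure the function $x\mapsto\dim\big(G_-(x)\cap G_+(x)\big)$ is measurable and flow-invariant, hence a.e.\ constant. First I would record the standing facts: a minimizing measure is carried by orbits with no conjugate points (Mather), so $G_\pm$ are defined $\mu$-a.e., are invariant, Lagrangian, transverse to the vertical $V^*$, satisfy $G_-\leqslant G_+$, and are uniformly bounded as sections of the Lagrangian Grassmannian over $\mathrm{supp}\,\mu$ (the Riccati solutions representing $G_\pm$ are squeezed between iterates of $V^*$, and $\mathrm{supp}\,\mu$ is compact). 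Applying Oseledets' theorem to the symplectic cocycle $D\phi^H_t$ over $(\phi^H_t,\mu)$ gives a measurable splitting $T_x(T^*\T^n)=E^s(x)\oplus E^0(x)\oplus E^u(x)$ into the sums of the Oseledets spaces with negative, zero and positive exponent, with $\dim E^s=\dim E^u=:d$, with $E^s,E^u$ isotropic and $E^0$ symplectic of dimension $2(n-d)$; the goal becomes $\dim\big(G_-\cap G_+\big)=n-d$ a.e.

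Next I would extract the soft inclusions from the dynamical criteria recalled in the appendix. Since $G_-$ is invariant, transverse to the vertical and uniformly bounded, the restriction of $D(\pi\circ\phi^H_t)(x)$ to $G_-(x)$ and its inverse have uniformly bounded norm, so $\|D\phi^H_t(x)v\|\asymp\|D(\pi\circ\phi^H_t)(x)v\|$ for $v\in G_-(x)$, $t\geqslant0$ (and symmetrically on $G_+$ for $t\leqslant0$). Combined with the criteria ``$v\notin G_+\Rightarrow\|D(\pi\circ\phi^H_{-t})(x)v\|\to\infty$'' and ``$v\notin G_-\Rightarrow\|D(\pi\circ\phi^H_{t})(x)v\|\to\infty$'', this yields $E^s\subseteq G_-$, $E^u\subseteq G_+$, $E^s\cap G_+=E^u\cap G_-=\{0\}$, $G_-\subseteq E^s\oplus E^0$, $G_+\subseteq E^u\oplus E^0$ and $G_-\cap G_+\subseteq E^0$. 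A dimension count, using that $G_\pm$ are Lagrangian ($n$-dimensional) and that an intersection with the symplectic space $E^0$ is isotropic, forces $\dim(G_-\cap E^0)=\dim(G_+\cap E^0)=n-d$, with $G_-=E^s\oplus(G_-\cap E^0)$, $G_+=E^u\oplus(G_+\cap E^0)$, and both $G_-\cap E^0$, $G_+\cap E^0$ Lagrangian in $E^0$. Hence the theorem reduces to the single assertion that the two invariant Lagrangian subbundles $G^0_-:=G_-\cap E^0$ and $G^0_+:=G_+\cap E^0$ of the symplectic bundle $E^0$ coincide $\mu$-a.e.

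The crux is exactly this last point, and it is where the Tonelli convexity and ergodicity enter. Work in the reduced symplectic cocycle carried by $E^0$, all of whose Lyapunov exponents vanish by construction, and represent $G^0_-\leqslant G^0_+$ by symmetric matrices $S_-\leqslant S_+$, setting $\Delta=S_+-S_-\geqslant0$. The Riccati cocycle governing the evolution of $S_\pm$ is \emph{strictly} order-preserving because the fibre Hessian $\partial_p^2H$ is positive definite; therefore, if $\Delta\not\equiv0$ on a set of positive measure, the strict monotonicity of the Riccati flow accumulates along a $\mu$-generic orbit — via Poincaré recurrence and the Birkhoff ergodic theorem — into a genuine exponential separation, i.e.\ a nonzero Lyapunov exponent \emph{inside} $E^0$, contradicting the definition of $E^0$. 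Hence $G^0_-=G^0_+$ a.e., so $\dim(G_-\cap G_+)=n-d$; reading off $p=n-d$ gives both implications. I expect this exponential-separation step to be the main obstacle: converting the infinitesimal strict monotonicity of the matrix Riccati equation, together with recurrence, into a uniform exponential lower bound on a positive-measure set is the genuinely dynamical part — the preceding steps are linear algebra together with the standard Green-bundle estimates.
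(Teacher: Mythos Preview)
The paper does not contain a proof of this theorem: it is quoted in the appendix as a background result, with the proof deferred to the cited reference \cite{arnaud2}. So there is no ``paper's own proof'' to compare against here.

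That said, your outline is the standard route and is, in broad strokes, the one taken in \cite{arnaud2}: set up the Oseledets splitting $E^s\oplus E^0\oplus E^u$, use the dynamical criteria for $G_\pm$ to obtain $E^s\subset G_-$, $E^u\subset G_+$ and $G_\pm\cap E^0$ Lagrangian in $E^0$, and then reduce the whole statement to proving $G_-\cap E^0=G_+\cap E^0$ almost everywhere. Your linear-algebra reductions are correct.

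The genuine gap is exactly where you flag it. The sentence ``strict monotonicity of the Riccati flow accumulates \ldots\ into a genuine exponential separation'' is not yet an argument: strict monotonicity of the Riccati cocycle at each time does \emph{not} by itself force a positive Lyapunov exponent, because the infinitesimal gain can decay along the orbit (the coefficient in front of the quadratic term is $\partial_p^2H$ evaluated at moving points, and the gap $\Delta=S_+-S_-$ may shrink). What one actually needs is a quantitative identity or inequality linking the growth of $\|D\phi^H_t|_{G_+}\|$ (or an equivalent cocycle) to an integral of a definite function of $\Delta$ along the orbit --- in \cite{arnaud2} this is obtained via an explicit formula for the derivative of the ``relative height'' of $G_-$ and $G_+$ (equivalently, for $\det(S_+-S_-)$ or for the symplectic angle), which is then fed into the Birkhoff/Kingman ergodic theorem to produce a strictly positive exponent on any set where $\Delta\neq 0$. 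Until you produce such a formula, the last paragraph is a heuristic rather than a proof; but once you have it, the remainder of your outline goes through as written.
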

Moreover, if the Oseledet's splitting along the support of $\mu$ is denoted by $E^s\oplus E^c\oplus E^u$, then we have: $G_-=E^s\oplus(G_-\cap G_+)$ and $G_+=E^u\oplus (G_-\cap G_+)$.
 

\bibliography{synthese}
\bibliographystyle{alpha}

\end{document}